\newcommand {\eps}{\varepsilon}
\newcommand {\im}{\mathrm{i}}
\newcommand {\N}{ \mathbb{N} }
\newcommand {\C}{ \mathbb{C} }
\newcommand {\R}{ \mathbb{R} }
\newcommand {\calO} {{\cal O}}
\DeclareMathOperator*{\argmax}{arg\,max}
\DeclareMathOperator*{\Ai}{\rm Ai}
\newtheorem{ass}[theorem]{Assumption}
\newtheorem{rem}[theorem]{Remark}
\newtheorem{exam}[theorem]{Example}
\newcommand {\mtrx}[2] {\left(\begin{array}{#1} #2 \end{array} \right)}
\newcommand\todo[1]{\textcolor{black}{#1}}
\author{
Anton Arnold \thanks{Institute of Analysis and Scientific Computing,
Wiedner Hauptstr. 8-10, 1040 Wien (\texttt{anton.arnold@tuwien.ac.at}).
} 
\and
Kirian D\"opfner \thanks{Institute of Analysis and Scientific Computing, 
Wiedner Hauptstr. 8-10, 1040 Wien (\texttt{kirian.doepfner@tuwien.ac.at}).
}
}
\title{Stationary Schr\"odinger equation in the semi-classical limit: WKB-based scheme coupled to a turning point}
\begin{document}
\maketitle


\begin{abstract} 
This paper is concerned with the efficient numerical treatment of 1D stationary Schr\"odinger equations in the semi-classical limit when including a turning point of first order. As such it is an extension of the paper \cite{AN18}, where turning points still had to be excluded. For the considered scattering problems we show that the wave function asymptotically blows up at the turning point as the scaled Planck constant $\eps\to0$, which is a key challenge for the analysis. Assuming that the given potential is linear or quadratic in a small neighborhood of the turning point, the problem is analytically solvable on that subinterval in terms of Airy or parabolic cylinder functions, respectively. Away from the turning point, the analytical solution is coupled to a numerical solution that is based on a WKB-marching method -- using a coarse grid even for highly oscillatory solutions. We provide an error analysis for the hybrid analytic-numerical problem up to the turning point (where the solution is 
asymptotically unbounded) and illustrate it in numerical experiments: If the phase of the problem is explicitly computable, the hybrid scheme is asymptotically correct w.r.t.\ $\eps$. If the phase is obtained with a quadrature rule of, e.g., order 4, then the spatial grid size has the limitation $h=\calO(\eps^{7/12})$ which is slightly worse than the $h=\calO(\eps^{1/2})$ restriction in the case without a turning point.
\end{abstract}


\begin{keywords}
Schr\"odinger equation, highly oscillatory wave functions, higher order WKB-approximation, turning points, Airy function, parabolic cylinder function, multi-scale problem, asymptotic analysis.
\end{keywords}


\begin{AMS}
34E20, 65L11, 65L20, 65L10
\end{AMS}


\section{Introduction}\label{sec:intro}

This paper is concerned with the numerical treatment of the stationary, one-dimensional Schr\"odinger equation \begin{equation} \label{eqn:1d_schroed}
\eps^2\psi^{\prime\prime}(x) + a(x) \psi(x) = 0\;, \quad x\in\R
\end{equation}
in the semi-classical limit $\eps\to0$.
Here, $0<\eps\ll1$ is the rescaled Planck constant ($\eps := \frac{\hbar}{\sqrt{2m}} $), 
$\psi(x)$ the (possibly complex valued) Schr\"odinger wavefunction, and the (given) real valued coefficient function $a(x)$ is related to the potential. For $a(x)>0$ and $\eps$ ``small'', the solution is highly oscillatory, and efficient numerical schemes have been developed, e.g., in \cite{ABN11, LJL05}.
The novel feature of this work is to include one \emph{turning point\/} of first order at $x=0$ (i.e.\ $a(0)=0$, $a'(0)>0$). Then, $x<0$ represents the evanescent region where the solution $\psi$ decreases exponentially, possibly including a pronounced boundary layer. $x>0$ is the oscillatory region, where the solution exhibits rapid oscillations with (local) wave length $\lambda(x)=\tfrac{2\pi\eps}{\sqrt{a(x)}}$. Hence, the situation at hand is a classical multi-scale problem. 

Standard numerical methods (e.g., \cite{IB95, IB97}) for \eqref{eqn:1d_schroed} --particularly in the highly oscillatory regime-- are costly and inefficient as they would require to resolve the oscillations by choosing a spatial grid with step size $h=\calO(\eps)$. By contrast, we are aiming here at a numerical method on a coarse grid with $h>\lambda$, while still recovering the fine structures of the solution. Our strategy is built upon the following works: For the purely oscillatory case $a(x)\ge\tau_1>0$ in the semi-classical regime, WKB-based marching methods (named after the physicists Wentzel, Kramers, Brillouin) were developed in \cite{ABN11, JL03, LJL05}. They allow to reduce the grid limitation to at least $h=\calO(\sqrt\eps)$. 
For the evanescent case $a(x)\le\tau_3<0$ (as $\eps\to0$), a WKB-based multi-scale FEM was introduced in \S3 of \cite{Neg05}.
A hybrid method to couple both of these regimes was recently introduced and analyzed in \cite{AN18}; it consists of a (non-overlapping) domain decomposition method. 
Turning points were excluded there, since (to the authors' knowledge) no $\eps$--uniformly accurate method for \eqref{eqn:1d_schroed} including turning points has been developed so far. Hence, the function $a$ in \cite{AN18} was assumed to have a jump discontinuity at the interface between the evanescent and the oscillatory regimes. In this work we shall extend the setting by including turning points of special form. Rather than providing a numerical scheme that can handle general turning points (which is unknown so far), this paper is more a feasibility study to identify the involved problems. 
One of the key challenges towards a uniformly accurate scheme for turning points is the fact that the continuous solutions $\psi_\eps$ asymptotically blow up (as $\eps\to0$) at the turning point (for a scattering problem to be specified in Section \ref{sec:scat_mod} below).

Hence, we shall not attempt here to make the WKB-marching method from \cite{ABN11} extendable up to the turning point. As a first step towards a full semi-classical method including turning points, we shall rather assume that $a$ is either a linear or quadratic function of $x$ close to the turning point, say on $[0,x_1]$. On that interval, the solution of \eqref{eqn:1d_schroed} is then an Airy function or, respectively, a parabolic cylinder function. This will lead to a hybrid method that is analytic on $[0,x_1]$ and numerical for $x>x_1$. 
Our strategy thus combines the WKB-method from \cite{ABN11} (away from the turning point) with the philosophy of \cite{Hal13}, \S15.5 (i.e.\ a linear approximation 
 of the potential in the first cell adjacent to the turning point). 
Nevertheless, this coupled problem still includes the effects of the turning point and the problems with handling it: The solution to \eqref{eqn:1d_schroed} --as a boundary value problem (BVP)-- becomes unbounded at the turning point in the semi-classical limit. Therefore, as $\eps\to0$, the errors of standard numerical methods would become unbounded there as well (since numerical errors in a BVP are non-local and pollute the whole interval). But for our hybrid method we shall still derive an error estimate \emph{up to the turning point\/}, and this error even decreases with $\eps$. We first note that, although the analytic solution form will be known on $[0,x_1]$, that (asymptotically unbounded) solution part is polluted by the numerical error at the boundaries. 
For our estimates it will be crucial that the WKB-method from \cite{ABN11} is not only \emph{uniformly accurate\/} w.r.t.\ $\varepsilon$ but even \emph{asymptotically correct\/}, i.e.\ the numerical error goes to zero with $\varepsilon\to0$. This will allow to over-compensate the unbounded growth of $|\psi_\varepsilon(0)|$ as $\varepsilon\to0$. 

This paper is organized as follows: In Section \ref{sec:scat_mod} we specify the scattering problem to be discussed, and in Section \ref{sec:ana_prob} we rewrite the scattering-BVP as an initial value problem (IVP), coupled to an Airy function solution close to the turning point (for the case of a linear potential on $[0,x_1]$). Section \ref{sec:blow_up} illustrates the blow-up of $\psi_\eps$ in the semi-classical limit.  Section \ref{sec:numeth_errana} is the core part of this work: We extend the WKB-error analysis from \cite{ABN11} to the hybrid problem at hand. This follows the strategy for the domain decomposition method in \cite{AN18}, but is more subtle here -- due to the unboundedness of $\psi_\eps$. A numerical illustration of the proved error estimates closes that section. Section \ref{sec:mod3} extends the previous analysis to the case of quadratic potentials -- close to the turning point.
In the final Section \ref{sec7} we briefly discuss possible extensions to more general potentials close to the turning point.


\section{Scattering Model}\label{sec:scat_mod}

Highly oscillatory problems similar to \eqref{eqn:1d_schroed} appear in a wide range of applications (e.g., electromagnetic and acoustic scattering, quantum physics). Our interest in this problem is motivated by the electron transport in nano-scale semiconductor devices, which will determine the details of our set-up. 1D models are of course idealizations, but quite appropriate, e.g., for resonant tunneling diodes \cite{BP06}. 
In this application $\psi$ represents the quantum mechanical wave function. The derived macroscopic quantities of interest to practitioners are the particle density $n(x):=|\psi(x)|^2$ and the current density $j(x):=\eps \Im(\bar\psi(x)\psi'(x))$, see \cite{ABN11} for more details. While $\psi$ is highly oscillatory, $n$ and $j$ are not; but they can only be obtained from the wave function.

We consider the \emph{internal domain\/} $x\in[0,1]$, which corresponds to the semiconductor device. Moreover, we assume that electrons are injected from the right boundary (or lead) with the prescribed energy $E$. The coefficient function $a$ in \eqref{eqn:1d_schroed} is then given by $a(x) := E-V(x)$ where $V(x)$ is the (prescribed) electrostatic potential of the problem. 
In reality, electrons are injected into a device as a statistical mixture with continuous energies $E\ge E_0$ \cite{FG97}. Therefore, given a potential $V$, a whole energy interval will give rise to \emph{turning points\/}\footnote{There, a classical particle with energy $E$ in the potential $V(x)$ would change directions, hence the name.}, i.e.\ zeros of $a$ within the interval $[0,1]$. To simplify the presentation we shall assume that the only turning point is located at $x=0$, and we shall consider only one such injection energy $E$. Specifically, we shall make the following assumptions for the given potential $V$ (see Fig.\ \ref{fig:potential}).

\begin{ass}\label{ass:coeff_a}\leavevmode
\begin{enumerate}[label = \alph*)]
\item{ $V\in \mathcal{C}(\R;\R)$. Potential jumps are excluded here only for simplicity. Without difficulty, they could be included inside the interior domain $(0,1)$ by restarting the IVP (from Section \ref{sec:ana_prob}) at jump points.}\label{item:cont_pot}
\item{ Let $V(x)<V(0)$ for $x\in(0,1]$ (to exclude further turning points besides of $x=0$).}\label{item:pot_1tp}
\item{ The potential in the left exterior domain (i.e. $x<0$)\todo{, and also in a (small) neighborhood of $x=0$,} is linear (for simplicity of the hybrid problem). We also assume (w.l.o.g.) that it has slope $-1$. \todo{More precisely we assume that $\exists\,x_1\in(0,1)$ such that $a(x)=x$ for $x\leq x_1$. Hence, \eqref{eqn:1d_schroed} is the (scaled) Airy equation for $x\leq x_1$. }}\label{item:pot_lext_tp}
\item{ The potential in the right exterior domain (i.e. $x>1$) is constant with value $V(1)$.}\label{item:pot_rext}
\end{enumerate}
\end{ass}

Hence, this scattering problem is oscillatory for $x>0$, evanescent for $x<0$, and it has a turning point of order $1$ at $x=0$. 
Since $V(x)>E$ for $x<0$, the injected wave function is fully reflected. Due to \ref{item:cont_pot} and \ref{item:pot_1tp} $\exists\,\tau_1>0$ such that $\tau_1\leq a(x)$ on $[x_1,1]$, which is an important assumption for the WKB-marching method from \cite{ABN11}.

In a realistic device model, it would of course be appropriate to assume that the potential $V$ is constant also in the left lead, i.e.\ on $(-\infty,x_2]$ with some $x_2<0$. If $V(x_2)>E$, the injected wave would still be fully reflected, leading to a situation that is qualitatively very similar to the present case. In order to simplify the subsequent proofs, we shall stick here to Assumption \ref{ass:coeff_a} \ref{item:pot_lext_tp}. The extension to a constant potential on $(-\infty,x_2]$ will be discussed in a follow-up work.

\begin{figure}[!ht]
	\includegraphics[trim=3.4cm 18.5cm 6cm 3cm,scale=1]{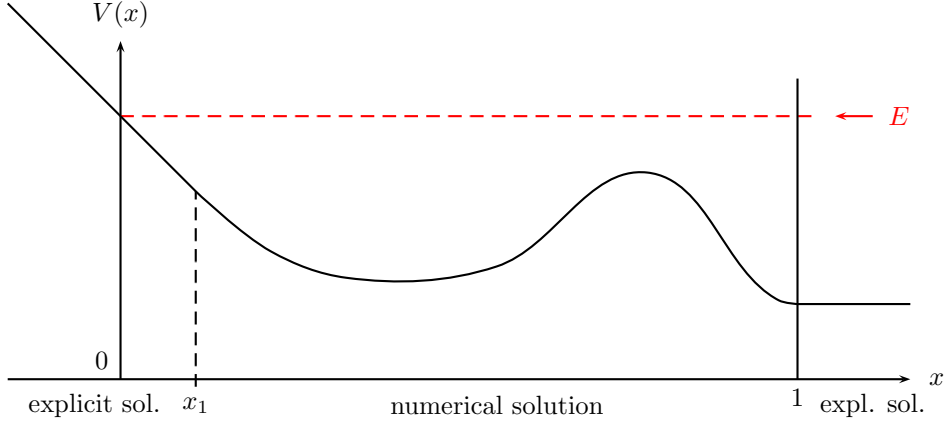}
	\caption{Sketch of the model described in Assumption \ref{ass:coeff_a} with linear potential left of $x_1$. Electrons are injected from the right boundary $x=1$ and there is a turning point of first order at the left boundary $x=0$. The coefficient function is  $a(x) := E-V(x)$.
	The explicit solution form is available for $x\le x_1$ and for $x\ge1$; on $(x_1,1)$ the solution is obtained numerically.}
	\label{fig:potential}
\end{figure}


For each fixed $\eps$, the wave function $\psi$ is $\mathcal{C}^2$ and we require the \emph{scattering solution\/} to be bounded w.r.t.\ $x\in\R$. Since the potential grows linearly for $x<0$, $\psi(x)$ has to decay to $0$ as $x\to-\infty$. Hence, $\psi$ is a scaled Airy function $\Ai$ for $x\le x_1$:
\begin{equation}\label{eqn:psiminus}
\psi_-(x) = c_0\Ai(-\tfrac{x}{\eps^{2/3}})\,, \quad x\in(-\infty,x_1]\,,
\end{equation}
with some $c_0\in\C$ to be determined.
Moreover $\psi$ is a superposition of two \todo{plane} waves \todo{(incoming and outgoing)} for $x\geq 1$:

\begin{equation}\label{eqn:psiplus}
\psi_+(x) = c_1 e^{\frac{\im \sqrt{a_1}}{\eps} x} + c_2 e^{-\frac{\im \sqrt{a_1}}{\eps} x}\,, \quad x\in[1,\infty)\,,
\end{equation}
with $a_1 := a(1)$.

This whole-space problem (with $x\in\R$) can be written as an equivalent BVP for $\psi$ on the interval $[x_1,1]$ by using transparent boundary conditions (BCs) that correspond to the $\mathcal{C}^1$--continuity of the matched whole-space solution. The inhomogeneous transparent BC at $x=1$ is well known from \cite{LK90,ABN11} (see \eqref{bvp:mod2}) and ensures that there is no reflection induced by the BC for an incident wave coming from the right.  $\mathcal{C}^1$--matching of $\psi$ with $\psi_-$ at $x_1$ reads:
\begin{eqnarray}\label{eqn:bc_mod2}
\begin{cases}
\psi(x_1) = c_0 \Ai\left(-\tfrac{x_1}{\eps^{2/3}}\right)\;,\\
\eps\psi^\prime(x_1) = -c_0 \eps^{\frac{1}{3}} {\Ai}'\left(-\tfrac{x_1}{\eps^{2/3}}\right)\;,
\end{cases}
\end{eqnarray}
with some $c_0(\eps)\in\C\setminus{\{0\}}$. Eliminating the (so far) unknown constant $c_0$, the last two conditions are combined into a Robin BC. In summary this yields the following BVP:
\begin{eqnarray}\label{bvp:mod2}
\begin{cases}
\eps^2 \psi^{\prime\prime}(x) +a(x)\psi(x) = 0\;, \quad x\in(x_1,1)\;,\\
\eps^\frac{2}{3}\Ai\left(-\tfrac{x_1}{\eps^{2/3}}\right)\psi^\prime(x_1)+{\Ai}^\prime\left(-\tfrac{x_1}{\eps^{2/3}} \right) \psi(x_1) = 0\;,\\
\eps\psi'(1) - \im\sqrt{a(1)}\psi(1) = -2\im\sqrt{a(1)}\;.
\end{cases}
\end{eqnarray}
Here we already used the assumption that the incident wave has amplitude $1$, and more precisely that $c_2 = e^\frac{\im \sqrt{a_1}}{\eps}$. Since the wave is fully reflected, we have $|c_1|=1$ for the reflection coefficient $c_1$. Hence, the wave $\psi_{+}$ from \eqref{eqn:psiplus} has the maximum amplitude $2$ on $x\geq 1$. The plots in Figures \ref{fig:tp_non_epsunif_psi} and \ref{fig:tp_non_epsunif_psi2} illustrate this.
\medskip

For the solvability of this BVP the following simple result holds: 
\vspace{0.2cm}
\begin{proposition}\label{prop:uniqueness}
Let $x_1\in[0,1)$ and $a\in\mathcal{C}[x_1,1]$ with $a(1)>0$.\
Then the BVP \eqref{bvp:mod2} has a unique solution $\psi\in\mathcal{C}^1[x_1,1]$.
\begin{proof}
This proof is analogous to Proposition 2.3 of \cite{BDM97} and Proposition 1.1 of \cite{AN18}: multiplying the Schr\"odinger equation by $\bar{\psi}$, integrating by parts, and taking the imaginary part.
\end{proof}
\end{proposition}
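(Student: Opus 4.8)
The plan is to deduce unique solvability of \eqref{bvp:mod2} from the Fredholm alternative, reducing it to the statement that the associated \emph{homogeneous} problem (with $0$ in place of the right-hand side $-2\im\sqrt{a(1)}$) has only the trivial solution. Concretely, I would fix a fundamental system $\phi_1,\phi_2\in\mathcal{C}^2[x_1,1]$ of $\eps^2\psi''+a\psi=0$ (available since $a$ is continuous), write every solution of the ODE as $\psi=\alpha\phi_1+\beta\phi_2$, and insert this ansatz into the two boundary conditions. This turns \eqref{bvp:mod2} into a $2\times2$ linear system $M\,(\alpha,\beta)^{\!\top}=(0,-2\im\sqrt{a(1)})^{\!\top}$, whose unique solvability is equivalent to $\det M\neq0$, i.e.\ to the homogeneous BVP admitting only $\psi\equiv0$.

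To show that, I would follow the cited energy-type argument: multiply the ODE by $\bar\psi$, integrate over $[x_1,1]$, and integrate the second-derivative term by parts to obtain
\begin{equation*}
\eps^2\big[\psi'\bar\psi\big]_{x_1}^{1}-\eps^2\int_{x_1}^1|\psi'|^2\,dx+\int_{x_1}^1 a\,|\psi|^2\,dx=0.
\end{equation*}
Since $a$ is real-valued, both integrals are real, so taking imaginary parts removes them and leaves only the boundary contribution
\begin{equation*}
\eps^2\,\Im\!\big(\psi'(1)\bar\psi(1)\big)-\eps^2\,\Im\!\big(\psi'(x_1)\bar\psi(x_1)\big)=0.
\end{equation*}

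The decisive step is to evaluate these two boundary terms from the homogeneous BCs. At $x=1$ the condition gives $\eps\psi'(1)=\im\sqrt{a(1)}\,\psi(1)$, so $\eps^2\psi'(1)\bar\psi(1)=\im\,\eps\sqrt{a(1)}\,|\psi(1)|^2$ is purely imaginary with imaginary part $\eps\sqrt{a(1)}\,|\psi(1)|^2$. At $x=x_1$ the coefficients $\Ai(-x_1/\eps^{2/3})$ and $\Ai'(-x_1/\eps^{2/3})$ are real: if $\Ai(-x_1/\eps^{2/3})\neq0$ the BC makes $\psi'(x_1)$ a real multiple of $\psi(x_1)$, so $\Im(\psi'(x_1)\bar\psi(x_1))=0$; and in the degenerate case $\Ai(-x_1/\eps^{2/3})=0$ the BC forces $\psi(x_1)=0$ (since $\Ai$ and $\Ai'$ share no zeros), so that term vanishes as well. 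The identity thus collapses to $\eps\sqrt{a(1)}\,|\psi(1)|^2=0$; as $a(1)>0$ this yields $\psi(1)=0$, and then $\psi'(1)=0$ from the $x=1$ BC. Uniqueness for the ODE initial value problem posed at $x=1$ then gives $\psi\equiv0$ on $[x_1,1]$.

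The main obstacle I expect is the boundary term at $x_1$: one must check that both the generic case and the degenerate case $\Ai(-x_1/\eps^{2/3})=0$ produce $\Im(\psi'(x_1)\bar\psi(x_1))=0$, relying on the fact that $\Ai,\Ai'$ are real on $\R$ and have no common zeros. The remaining pieces -- the linear-algebra reduction to the homogeneous problem and the energy estimate itself -- are routine.
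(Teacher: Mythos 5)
Your proposal is correct and follows exactly the route the paper indicates (it only cites \cite{BDM97} and \cite{AN18} for the argument of multiplying by $\bar\psi$, integrating by parts, and taking the imaginary part): the reduction to the homogeneous problem, the vanishing of the $x_1$ boundary term because the Robin coefficients $\Ai(-x_1/\eps^{2/3})$ and $\Ai'(-x_1/\eps^{2/3})$ are real (including the degenerate case, handled correctly since $\Ai$ and $\Ai'$ share no zeros), and the conclusion $\psi(1)=\psi'(1)=0$ followed by backward IVP uniqueness are all as intended. No gaps.
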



\emph{\bf Notation and assumptions:\/}\\
Now we recall some notation and assumptions needed to apply the WKB-marching method from \cite{ABN11} to the BVP \eqref{bvp:mod2}. The well-known WKB-approximation (cf.\ \cite{LL85}, \S15 of \cite{Hal13}), for the oscillatory regime where $a(x)\geq \tau_1>0$, is based on inserting the asymptotic power series ansatz 
\begin{equation}\label{eqn:wkb_exp}
\psi(x)\sim \exp\left(\frac{1}{\eps}\sum_{p=0}^\infty\eps^p\phi_p(x)\right)\,
\end{equation}
into the equation \eqref{eqn:1d_schroed}, and comparing $\eps$--powers to successively obtain the functions $\phi_p(x)$. Truncating the sum in the exponential after $p=2$ leads to the $2^{nd}$ order asymptotic WKB-approximation for the oscillatory regime
\begin{equation}\label{eqn:wkb2}
\varphi_2(x) = \frac{\exp\left(\pm\frac{\im}{\eps}\phi(x) \right)}{\sqrt[4]{a(x)}}\;,
\end{equation}
with the phase
\begin{equation}\label{eqn:phase_beta}
\phi(x):=\int_{x_1}^x\left(\sqrt{a(\tau)} - \eps^2\beta(\tau) \right)d\tau\;,\quad \beta(x):=-\frac{(a^{-1/4})^{\prime\prime}}{2a^{1/4}}\;.
\end{equation}
In the WKB-marching method from \cite{ABN11} this $2^{nd}$ order WKB-approxi\-mation is used to transform the equation \eqref{eqn:1d_schroed} to a smoother problem that is then numerically solved on a coarse grid, accurately and efficiently. This is done by reformulating the BVP \eqref{bvp:mod2} into an IVP using the boundary condition at $x_1$ and then scaling the numerical approximation to this IVP (obtained by the scheme we will recall in Section \ref{subsec:wkb_meth}) to also satisfy the boundary condition at $x=1$.

We need to make an assumption to assure the feasibility of the WKB-marching method:
\begin{ass}\label{ass:a_eps_wkb}
Let $a\in \mathcal{C}^5[x_1,1]$ be real valued and satisfy the following bounds
\begin{equation*}
0<\tau_1\leq a(x)\leq \tau_2\;,\quad \forall x\in[x_1,1]\;.
\end{equation*}
Moreover let $0<\eps\leq\eps_0$, with some $\eps_0$ such that
\begin{equation*}
0<\eps_0< \min \left\{ 1,\min_{x_1\leq x \leq 1} [a(x)^{1/4}\beta_+(x)^{-1/2}]\right\}\;,
\end{equation*}
where $\beta_+$ denotes the non-negative part of $\beta$.
\end{ass}
Mind that this assumption on $\eps$ guarantees that the phase $\phi(x)$ for the $2^{nd}$ order WKB-approximation is strictly increasing since the integrand $\sqrt{a}-\eps^2\beta$ is then positive.


\section{Analytical problem: reformulation as IVP}\label{sec:ana_prob}

For the numerical solution of \eqref{bvp:mod2} we want to apply the WKB-marching method from \cite{ABN11} on the interval $[x_1,1]$. To this end we need to reformulate the BVP \eqref{bvp:mod2} as an IVP, whose solution $\hat{\psi}$ will be scaled afterwards to satisfy the BCs in \eqref{bvp:mod2}. Note that the left BC at $x_1$ in \eqref{bvp:mod2} is invariant under scalings.

\underline{\it Step 1:} Since \eqref{bvp:mod2} only includes one condition at $x_1$, it is necessary to prescribe an additional, auxiliary initial value for $\hat\psi(x_1)$. The condition on $\eps\hat{\psi}^\prime(x_1)$ then follows from the Robin BC at $x_1$ in \eqref{bvp:mod2}. On the one hand the two ICs should have the structure of  \eqref{eqn:bc_mod2} (with an appropriate choice of $c_0$), and on the other hand the scaling constant $c_0$ should be of order\footnote{``Big Theta'' is defined as: $f(x)=\Theta(g(x))$ as $x\to a :\Leftrightarrow \exists\,k_1, k_2, \delta>0:\forall x:|x-a|<~\delta: \,k_1|g(x)|\leq |f(x)|\leq k_2|g(x)|$, i.e.\ $g(x)$ is an asymptotic tight bound for $f(x)$. Occasionally we will give $\Theta$ a subscript to specify the variable of asymptotic limit.} $\Theta(\eps^{-\frac{1}{6}})$, such that the initial condition vector $(\hat{\psi}(x_1),\eps\hat{\psi}^\prime(x_1))^\top$ is $\eps$--uniformly bounded above and below. In fact, $c_0= \eps^{-\frac{1}{6}}$ in \eqref{eqn:bc_mod2} yields
\begin{equation}\label{eqn:unif_bnd_ic_ivp}
c\leq\left\|\mtrx{c}{\hat{\psi}(x_1)\\\eps\hat{\psi}^\prime(x_1)}\right\|
=\left\|\mtrx{c}{\eps^{-\frac{1}{6}}\Ai(-\tfrac{x_1}{\eps^{2/3}})\\ -\eps^{\frac{1}{6}}{\Ai}'(-\tfrac{x_1}{\eps^{2/3}})}\right\|
\leq C\;, 
\end{equation}
where the constants $c,C>0$ are independent of $\eps$. This can be verified using the asymptotic expansions for $\Ai(-z)$ and ${\Ai}^\prime(-z)$ from \eqref{eqn:full_asy_airy}: We get the asymptotic representations with the argument $z=\frac{x}{\eps^{2/3}}$ for some (fixed) $x>0$ as $\eps\to 0$:
\begin{equation}\label{eqn:asy_rep_airy}
\begin{aligned}
\Ai(-\tfrac{x}{\eps^{2/3}}) &=  \eps^\frac{1}{6} x^{-\frac{1}{4}}\pi^{-\frac{1}{2}} \left(\cos(\xi(x)) + \tfrac{5}{48}\sin\left(\xi(x)\right)x^{-\frac{3}{2}}\,\eps + \calO(\eps^2) \right)\;,\\
{\Ai}^\prime(-\tfrac{x}{\eps^{2/3}}) &= \eps^{-\frac{1}{6}} x^{\frac{1}{4}}\pi^{-\frac{1}{2}} \left(\sin(\xi(x)) + \tfrac{7}{48}\cos\left(\xi(x)\right)x^{-\frac{3}{2}}\,\eps + \calO(\eps^2)\right)\;,
\end{aligned}
\end{equation}
where $\xi(x):= \frac{2x^\frac{3}{2}}{3\eps}- \frac{\pi}{4}$. We verify

\begin{equation*}
\left\|\begin{array}{c} \eps^{-\frac{1}{6}}\Ai(-\frac{x_1}{\eps^{2/3}})\\ -\eps^{\frac{1}{6}}{\Ai}'(-\frac{x_1}{\eps^{2/3}}) 
\end{array}\right\|^2 = \frac{|\cos(\xi(x_1))|^2+x_1|\sin(\xi(x_1))|^2+\calO(\eps)}{\pi\,x_1^{{1/2}}}\,,\quad\eps\to 0\,.
\end{equation*}
The $\eps$--uniform lower bound on the IC can be found, as $\cos$ and $\sin$ never vanish simultaneously. Hence, this scaling gives a natural balance of $\psi_{-}$ and $\eps\psi_{-}^\prime$. We shall thus consider the IVP
\begin{equation}\label{ivp:tp_mod2}
\begin{cases}
\eps^2\hat\psi^{\prime\prime}(x) + a(x) \hat\psi(x) = 0\;, \quad x\in(x_1,1)\;,\\
\hat\psi(x_1) = \eps^{-\frac{1}{6}}\Ai(-\frac{x_1}{\eps^{2/3}})\;, \\
\eps\hat\psi^\prime(x_1) = -\eps^{\frac{1}{6}}{\Ai}^\prime(-\frac{x_1}{\eps^{2/3}})\;.
\end{cases}
\end{equation}
Here and in the sequel we use the notation $\hat{\psi}$ to refer to the solution of this IVP. 

\underline{\it Step 2:} Next the solution $\hat{\psi}$ of this IVP is scaled as 
\begin{equation}\label{eqn:psi_scale}
\psi(x) := ~\alpha\,\hat{\psi}(x)\,,
\end{equation}
with
\begin{equation}\label{eqn:alpha_scal}
\alpha(\hat{\psi}(1),\hat{\psi}^\prime(1)) := \frac{-2\im\sqrt{a(1)}}{\eps\hat\psi^\prime(1)-\im\sqrt{a(1)}\hat\psi(1)}\;,
\end{equation} 
in order to satisfy the BC at $x=1$. Note that this scaling preserves the BC at $x_1$, and thus, $\psi$ is a solution to the BVP \eqref{bvp:mod2}. From here on we use the notation $\psi$ to refer to the solution of BVP \eqref{bvp:mod2}.
This scaling is also applied to the extension of the solution to $[0,x_1]$ as $\psi(x) = \alpha\,\psi_{-}(x)$, with the choice $c_0=\eps^{-\frac{1}{6}}$ in \eqref{eqn:psiminus}. This equivalence of the BVP to an IVP with a-posteriori scaling was already used in \cite[\S 2]{ABN11} and \cite[Prop. 2.2]{AN18} for closely related problems. 


\emph{\bf The vector valued system:\/}\\
Following \cite{ABN11} it is convenient to reformulate the second order differential equation \eqref{eqn:1d_schroed} as a system of first order. This is done in the following non-standard way: Instead of the vector $(\hat{\psi}(x),\eps\hat{\psi}^\prime(x))^\top$ we shall use 
\begin{equation}\label{eqn:trafo_system_W}
\hat{W}(x)= \mtrx{c}{\hat{w}_1(x)\\ \hat{w}_2(x)}:= \mtrx{c}{a^{1/4}\hat{\psi}(x)\\\frac{\eps(a^{1/4}\hat{\psi})^\prime(x)}{\sqrt{a(x)}}}\;,
\end{equation}
with the transformation matrix
\begin{equation}\label{eqn:trafo_Ax}
A(x) := \mtrx{cc}{a^{1/4}(x) & 0\\ \frac{\eps}{4}a^{-5/4}(x)a^\prime(x) & a^{-1/4}(x)}\,, \quad\textit{i.e.}\quad \hat{W}(x) = A(x)\mtrx{c}{\hat{\psi}(x)\\\eps\hat{\psi}^\prime(x)}\,.
\end{equation}
Under Assumption \ref{ass:a_eps_wkb} (i.e.\ $a(x)$ is bounded away from zero), the transformation matrix $A(x)$ and its inverse are uniformly bounded w.r.t.\ $x\in[x_1,1]$ and $\eps$. Hence, the norms of the two vectors $\hat{W}(x)$ and $(\hat{\psi}(x),\eps\hat{\psi}^\prime(x))^\top$ are equivalent, uniformly in $\eps$. 
After the transformation \eqref{eqn:trafo_system_W}, the IVP \eqref{ivp:tp_mod2} reads
\begin{equation}\label{ivp:tp_mod2_W}
\begin{cases}
\hat{W}^\prime(x) = \left[\frac{1}{\eps}A_0(x)+\eps A_1(x) \right]\hat{W}(x)\;, &\quad x\in[x_1,1]\;,\\
\hat{W}(x_1)=A(x_1)\mtrx{c}{\hat{\psi}(x_1)\\ \eps\hat{\psi}^\prime(x_1)}\in\R^2\;,
\end{cases}
\end{equation}
with the two matrices 
\begin{equation}\label{eqn:mtrx_a0a1}
A_0(x) := \sqrt{a(x)}\mtrx{cc}{0&1\\-1&0}\;;\quad A_1(x):=\mtrx{cc}{0&0\\2\beta(x)&0}\;.
\end{equation}


In order to show (in Section \ref{sec:numeth_errana}) that the WKB-marching method from \cite{ABN11} applied to \eqref{ivp:tp_mod2} yields a uniformly accurate scheme for the BVP \eqref{bvp:mod2}, we shall need $\eps$--uniform boundedness of the scaling factor $\alpha$ from \eqref{eqn:alpha_scal}. This can be inferred from a uniform lower bound on $(\hat{\psi},\eps\hat{\psi^\prime})^\top$, which we establish similarly to \cite[Lemma 3.4]{AN18}:
\begin{lemma}\label{lem:unif_bnd_W}
Let $a(x)\in\mathcal{C}^2[x_1,1]$ and $a(x)\geq\tau_1>0$. Let $\hat\psi(x)$ be the solution to the IVP \eqref{ivp:tp_mod2}, then $(\hat\psi(x),\eps\hat{\psi}^\prime(x))$ is uniformly bounded above and below, i.e.\
\begin{equation}\label{eqn:ul_unif_bnd_psi}
C_1\leq \left\|(\hat{\psi}(x),\eps\hat{\psi}^\prime(x))\right\|\leq C_2\;,\quad x\in[x_1,1]\;,
\end{equation} 
or equivalently 
\begin{equation}\label{eqn:ul_unif_bnd_W}
C_3\leq \|\hat{W}(x)\|\leq C_4\;,\quad x\in[x_1,1]\;,
\end{equation} 
where the constants $C_1,\ldots,C_4>0$ are independent of $0<\eps<\eps_0$.
\begin{proof}
Let $\hat{W}(x)$ be a solution to \eqref{ivp:tp_mod2_W}. A short calculation for the norm $\|\hat{W}(x)\|^2$
$=|\hat{w}_1(x)|^2+|\hat{w}_2(x)|^2$ shows
\begin{equation*}
\left|\frac{d}{dx}\|\hat{W}(x)\|^2\right| = |2\eps\beta(x)(\hat{w}_1\bar{\hat{w}}_2+\bar{\hat{w}}_1\hat{w}_2)|\leq 2\eps|\beta(x)|\|\hat{W}(x)\|^2\;,
\end{equation*}
which implies 
\begin{equation*}
\|\hat{W}(x_1)\|e^{-\eps\int_{x_1}^x|\beta(y)| dy}\leq\|\hat{W}(x)\|\leq \|\hat{W}(x_1)\|e^{\eps\int_{x_1}^x|\beta(y)| dy}\;,\quad x\in[x_1,1]\,.
\end{equation*}
As the norms of $\hat{W}(x)$ and $(\hat{\psi},\eps\hat{\psi}^\prime)^\top$ are ($\eps$--uniformly) equivalent, the proof is concluded if the norm of the initial condition $\hat{W}(x_1)$ is $\eps$--uniformly bounded from above and below. This is again equivalent to \eqref{eqn:unif_bnd_ic_ivp}, proving the assertion.
\end{proof}
\end{lemma}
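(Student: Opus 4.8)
The plan is to work with the transformed vector $\hat{W}$ rather than $(\hat\psi,\eps\hat\psi')^\top$ directly, since the equivalence of the two norms (uniformly in $\eps$) is already guaranteed by the boundedness of $A(x)$ and $A(x)^{-1}$ noted after \eqref{eqn:trafo_Ax}; establishing \eqref{eqn:ul_unif_bnd_W} therefore suffices. The decisive structural feature I would exploit is the splitting in \eqref{ivp:tp_mod2_W}: the leading coefficient $\tfrac1\eps A_0(x)$ is a scalar multiple of the skew-symmetric matrix $\mtrx{cc}{0&1\\-1&0}$, so it preserves the Euclidean norm and drops out of any energy estimate, leaving only the $\calO(\eps)$ perturbation $\eps A_1(x)$ to control.

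Concretely, I would differentiate $\|\hat W(x)\|^2=|\hat w_1|^2+|\hat w_2|^2$ along the flow. Inserting the two components of \eqref{ivp:tp_mod2_W} and using that the contribution of $A_0$ is purely imaginary (hence vanishes upon taking the real part), one is left with
\[
\frac{d}{dx}\|\hat W(x)\|^2 = 2\eps\beta(x)\big(\hat w_1\bar{\hat w}_2+\bar{\hat w}_1\hat w_2\big),
\]
which is bounded in modulus by $2\eps|\beta(x)|\,\|\hat W(x)\|^2$. Since \eqref{ivp:tp_mod2_W} is a linear ODE whose initial datum is nonzero for every fixed $\eps$ (as $\Ai$ and $\Ai'$ share no common zero), $\hat W$ never vanishes, so $\ln\|\hat W\|^2$ is well defined; dividing by $\|\hat W\|^2$ and integrating the resulting two-sided differential inequality from $x_1$ to $x$ yields
\[
\|\hat W(x_1)\|\,e^{-\eps\int_{x_1}^x|\beta|\,dy}\;\le\;\|\hat W(x)\|\;\le\;\|\hat W(x_1)\|\,e^{\eps\int_{x_1}^x|\beta|\,dy}.
\]

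To convert this into the $\eps$-uniform two-sided bound \eqref{eqn:ul_unif_bnd_W}, I would then control the two ingredients separately. The exponential factors are harmless: under the hypotheses $a\in\mathcal{C}^2[x_1,1]$ and $a\ge\tau_1>0$, the function $\beta$ from \eqref{eqn:phase_beta} is continuous, so $\int_{x_1}^1|\beta|\,dy\le M$ with $M$ independent of $\eps$; combined with $\eps<\eps_0$, the factors $e^{\pm\eps M}$ are bounded above and below by positive constants. What remains is a two-sided bound on the initial norm $\|\hat W(x_1)\|$, and since $A(x_1)$ is uniformly invertible this is equivalent to the bound \eqref{eqn:unif_bnd_ic_ivp} on $(\hat\psi(x_1),\eps\hat\psi'(x_1))^\top$.

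The main obstacle is precisely this last, \emph{lower}, bound on the initial datum. Because $\hat\psi(x_1)$ and $\eps\hat\psi'(x_1)$ are the scaled Airy values $\eps^{-1/6}\Ai(-x_1/\eps^{2/3})$ and $-\eps^{1/6}\Ai'(-x_1/\eps^{2/3})$, each oscillates as $\eps\to0$ and comes arbitrarily close to zero, so neither can be bounded below on its own. The resolution is to invoke the full asymptotic expansions \eqref{eqn:asy_rep_airy}: to leading order the two scaled components behave like $x_1^{-1/4}\pi^{-1/2}\cos\xi(x_1)$ and $x_1^{1/4}\pi^{-1/2}\sin\xi(x_1)$ with a common phase $\xi$, and since $\cos$ and $\sin$ never vanish simultaneously, the squared vector norm stays bounded below by a positive constant as $\eps\to0$. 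Here it is essential that $x_1>0$, so that the prefactors $x_1^{\pm1/4}$ are finite and nonzero; the matching upper bound is immediate from the same expansions, completing \eqref{eqn:unif_bnd_ic_ivp} and thereby the lemma.
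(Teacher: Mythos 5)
Your proposal is correct and follows essentially the same route as the paper: an energy estimate on $\|\hat W\|^2$ exploiting the skew-symmetry of $A_0$, a Gronwall-type two-sided bound with the factors $e^{\pm\eps\int|\beta|}$, norm equivalence via $A(x)$, and reduction to the two-sided bound \eqref{eqn:unif_bnd_ic_ivp} on the initial datum, which is verified through the Airy asymptotics and the fact that $\cos$ and $\sin$ never vanish simultaneously. Your write-up is somewhat more explicit than the paper's (e.g.\ on the non-vanishing of $\hat W$ and the role of $x_1>0$), but there is no substantive difference.
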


A solution $\hat{W}$ to the analytical IVP \eqref{ivp:tp_mod2_W} needs to be scaled such that, after transforming back via $A(x)^{-1}$ to $(\psi,\eps\psi^\prime)^\top$, it fits both boundary conditions in \eqref{bvp:mod2}. In analogy to \eqref{eqn:alpha_scal} this is done via the scaling parameter $\tilde{\alpha}\in\C$ defined as
\begin{equation} \label{eqn:alpha_scal_W}
\tilde{\alpha}(\hat{W}(1)) := \frac{-2\im a(1)^{1/4}}{\hat{w}_2(1) - \left[\im + \frac{\eps}{4}a(1)^{-3/2}a^\prime(1)\right]\hat{w}_1(1)} = \alpha(\hat{\psi}(1),\hat{\psi}^\prime(1))\;.
\end{equation}

Now we can write the exact solution $\psi(x)$ to the BVP \eqref{bvp:mod2} extended to the region $[0,1]$ as
\begin{equation*}
\psi(x) = 
\begin{cases}
\tilde{\alpha}(\hat{W}(1))\,\psi_-(x)\;,\qquad & x\in [0,x_1]\,,\\
\tilde{\alpha}(\hat{W}(1))\,\hat{\psi}(x) = \tilde{\alpha}(\hat{W}(1))\,\hat{w}_1(x)\,a(x)^{-\frac{1}{4}}\;,\qquad & x\in [x_1,1]\,.
\end{cases}
\end{equation*}

$\hat{W}$, as well as $\hat{\psi}$, are real-valued, and $\psi$ only becomes complex-valued due to the scaling with $\tilde{\alpha} (=\alpha)$. Since $\hat{W}\in\R^2$, the denominator in $\tilde{\alpha} $ cannot vanish, except for the trivial solution $\hat{W}\equiv 0$. Therefore the scaling by $\tilde{\alpha}$ is well-defined and one can show the following properties.

\begin{lemma}[see Lemma 3.7 in \cite{AN18}]\label{lem:alpha_lip_bnd}
Let $\delta>0$ be fixed. Then the map $\tilde{\alpha}: \R^2\setminus B_\delta(0)\rightarrow \C$ in \eqref{eqn:alpha_scal_W} is Lipschitz continuous with Lipschitz constant $L_{\tilde{\alpha}}>0$ and bounded with a constant $C_{\tilde{\alpha}} $. Both, $L_{\tilde{\alpha}}$ and $C_{\tilde{\alpha}}$, can be chosen uniformly with respect to $0<\eps\leq \eps_0$.
\end{lemma}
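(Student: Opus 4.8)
The plan is to regard $\tilde\alpha$ as a function of a generic argument $W=(w_1,w_2)^\top\in\R^2$ and to write $\tilde\alpha(W) = N/D(W)$, where $N := -2\im\,a(1)^{1/4}$ is a fixed nonzero complex constant (independent of both $W$ and $\eps$) and the denominator is the affine map
\[
D(W) := w_2 - \bigl(\im+\gamma\bigr)w_1 = (w_2-\gamma w_1) - \im\,w_1, \qquad \gamma := \tfrac{\eps}{4}a(1)^{-3/2}a'(1)\in\R .
\]
Both claimed properties -- boundedness and the Lipschitz estimate -- then reduce to a single quantitative fact: a lower bound $|D(W)|\ge d_0>0$ valid uniformly for $W\in\R^2\setminus B_\delta(0)$ and uniformly in $0<\eps\le\eps_0$. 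This is where essentially all the work lies.

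For that lower bound, the crucial point is that the entire $\eps$-dependence of $D$ sits in the scalar $\gamma$, which is uniformly bounded: since $0<\eps\le\eps_0$, $a(1)\ge\tau_1>0$ and $a'(1)$ is a fixed number, one has $|\gamma|\le\gamma_{\max}:=\tfrac{\eps_0}{4}a(1)^{-3/2}|a'(1)|$ independently of $\eps$. Reading off $\mathrm{Im}\,D=-w_1$ and $\mathrm{Re}\,D=w_2-\gamma w_1$, I would argue by contraposition: if $|D(W)|<\eta$ then $|w_1|<\eta$ and $|w_2|\le|w_2-\gamma w_1|+|\gamma|\,|w_1|<(1+\gamma_{\max})\eta$, so $\|W\|^2<\eta^2\bigl(1+(1+\gamma_{\max})^2\bigr)$. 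Hence $\|W\|\ge\delta$ forces
\[
|D(W)|\ \ge\ \frac{\delta}{\sqrt{1+(1+\gamma_{\max})^2}}\ =:\ d_0\ >\ 0,
\]
with $d_0$ independent of $\eps$ and of the particular $W$.

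Granting this, boundedness is immediate from $|\tilde\alpha(W)|=|N|/|D(W)|\le 2a(1)^{1/4}/d_0=:C_{\tilde\alpha}$. For the Lipschitz estimate I would use that $D$ is affine with coefficient vector $(-(\im+\gamma),1)$ of squared modulus $2+\gamma^2$, so by Cauchy--Schwarz $|D(W)-D(V)|\le\sqrt{2+\gamma_{\max}^2}\,\|W-V\|=:L_D\|W-V\|$, and then combine the identity
\[
\tilde\alpha(W)-\tilde\alpha(V) = N\,\frac{D(V)-D(W)}{D(W)\,D(V)}
\]
with the lower bound $|D|\ge d_0$ to obtain $|\tilde\alpha(W)-\tilde\alpha(V)|\le(|N|L_D/d_0^2)\|W-V\|=:L_{\tilde\alpha}\|W-V\|$. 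Every constant introduced along the way depends only on $a(1)$, $a'(1)$, $\delta$ and $\gamma_{\max}$, hence is uniform in $\eps\in(0,\eps_0]$, as required.

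The only genuine obstacle is the uniform-in-$\eps$ lower bound on the denominator; once the $\eps$-dependence has been isolated in the bounded scalar $\gamma$, the remainder is the standard ``reciprocal of a map bounded away from zero'' argument, exactly mirroring the structure of Lemma 3.7 in \cite{AN18}.
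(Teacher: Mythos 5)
Your proposal is correct: the paper itself gives no proof of this lemma, deferring entirely to Lemma 3.7 of \cite{AN18}, and your argument supplies exactly the expected details — isolating the $\eps$-dependence in the uniformly bounded real coefficient $\gamma$, using that $W\in\R^2$ to read off $\mathrm{Im}\,D=-w_1$ and $\mathrm{Re}\,D=w_2-\gamma w_1$, and deducing the uniform lower bound $|D(W)|\ge d_0$ on $\R^2\setminus B_\delta(0)$, from which boundedness and Lipschitz continuity of the reciprocal follow in the standard way. No gaps; the constants are visibly uniform in $0<\eps\le\eps_0$ as required.
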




\section{Asymptotic blow-up at the turning point}\label{sec:blow_up}

The goal of this work is to construct an $\eps$--uniformly accurate numerical scheme for \eqref{bvp:mod2}. Since it incorporates (implicitly) the turning point at $x=0$, it shall be a generalization of \cite{ABN11,AN18}. A key ingredient of the numerical analysis in these papers was the uniform boundedness of the solution $\psi$ w.r.t.\ $\eps$. But when including a turning point, this does \emph{not\/} hold any more, which is a main challenge of the situation at hand. At the turning point, solutions to the BVP \eqref{bvp:mod2} exhibit blow-up behavior as $\eps\to 0$, i.e.\ $|\psi(0)| = \Theta(\eps^{-\frac{1}{6}})$. This is shown in the following explicitly solvable example and the proposition that follows it.

\begin{exam}\label{exam:airy}

Consider \eqref{bvp:mod2} with $x_1=0$ and $a(x) = x$ for $x\in[0,1]$ and $0<\eps<1$. Then the explicit solution reads
\begin{equation}\label{eqn:solairy_bvp_mod2}
\psi_\eps(x) = \frac{2}{\eps^{-{1/6}}\Ai(-\tfrac{1}{\eps^{2/3}})-\im\eps^{1/6}{\Ai}'(-\tfrac{1}{\eps^{2/3}})}\eps^{-\frac{1}{6}}\Ai\left(-\tfrac{x}{\eps^{2/3}}\right)\;.
\end{equation}
Figures \ref{fig:tp_non_epsunif_psi} and \ref{fig:tp_epsunif_psiPri} illustrate that $\eps\|\psi^\prime_\eps\|_{L^\infty(0,1)}$ is uniformly bounded w.r.t.\ $0<\eps\leq 1$, but $\|\psi_\eps\|_{L^\infty(0,1)}$ is \emph{not\/} since $\{|\psi_\eps(0)|\}$ becomes unbounded as $\eps\to0$. 
\end{exam}

\begin{figure}[!ht]
	\includegraphics[scale=0.63]{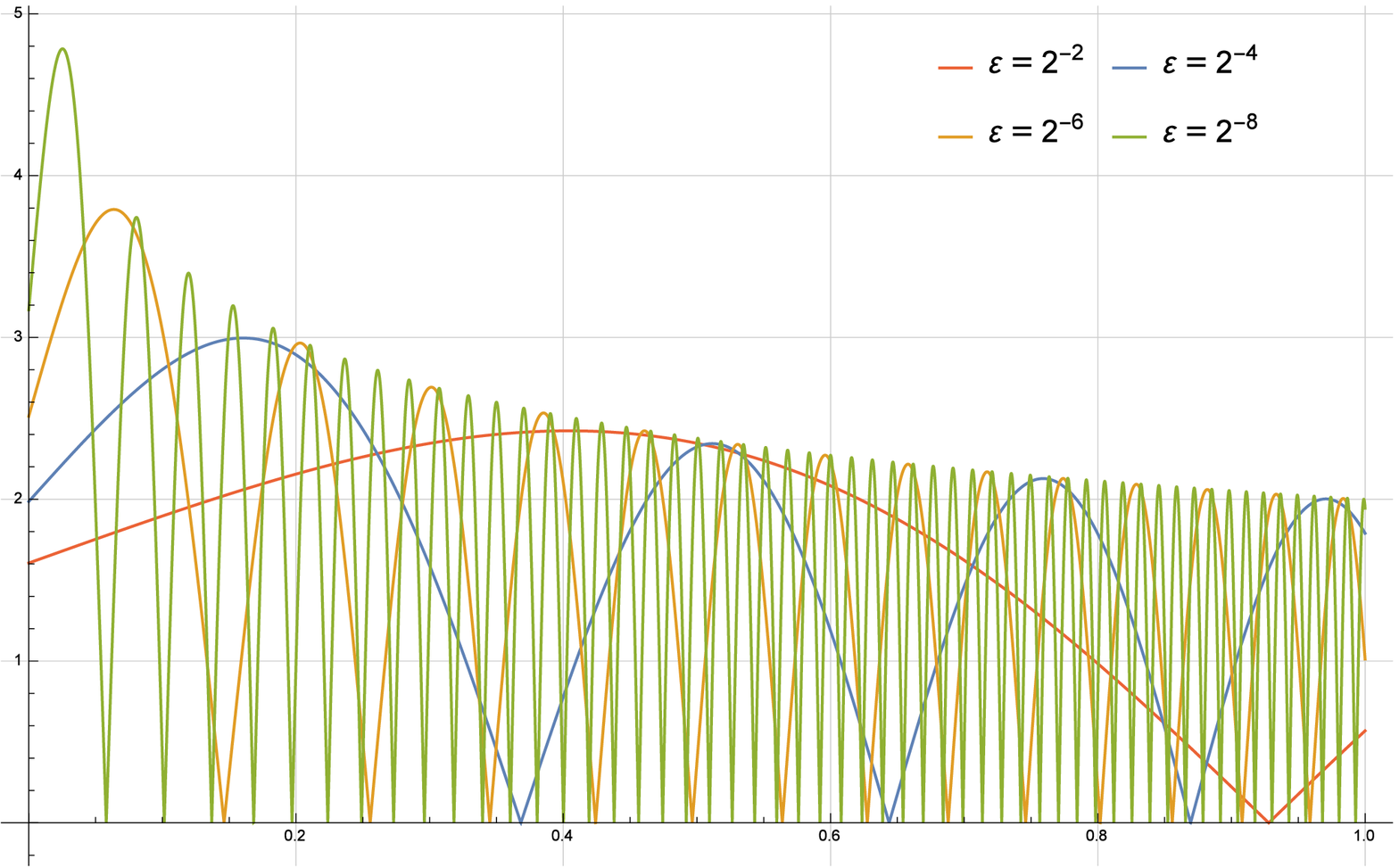}
	\caption{$|\psi_\eps(x)|$ for various values of $\eps$. At the turning point $(x=0)$ it increases with the order $\eps^{-\frac{1}{6}}$.} 
	\label{fig:tp_non_epsunif_psi}
\end{figure}
\begin{figure}[!ht]
	\includegraphics[scale=0.63]{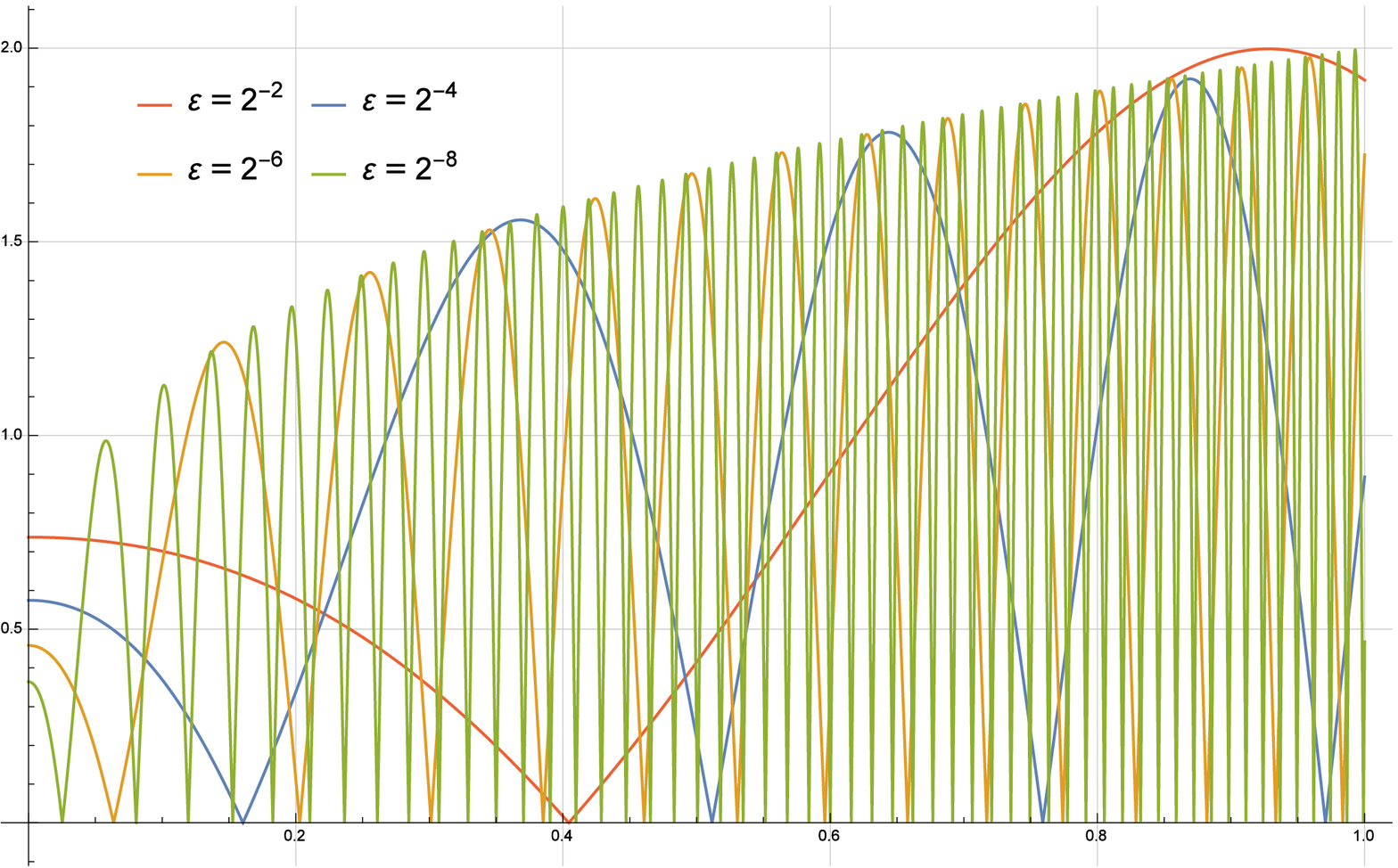}
	\caption{$\eps|\psi_\eps^\prime(x)|$ for various values of $\eps$.}
	\label{fig:tp_epsunif_psiPri}
\end{figure}

This blow-up and, resp., boundedness behavior of $\psi_\eps$ actually extends to all potentials satisfying Assumption \ref{ass:coeff_a}:

\begin{proposition}\label{prop:bnd_mod2}
Let $x_1\in(0,1)$ and $a(x)$ be as in Assumption \ref{ass:coeff_a} and $\mathcal{C}^2$ on $[x_1,1]$. Then the family of solutions $\{\psi_\eps(x)\}$ to the BVP \eqref{bvp:mod2}, extended with the Airy solution on $[0,x_1]$, satisfies:
\begin{enumerate}[label = \alph*)]
\item{
$\|\psi_\eps\|_{L^\infty(0,1)}$ is of the (sharp) order $\Theta(\eps^{-\frac{1}{6}})$ for $\eps\to0$.
}\label{item:nonunifbnd_mod2_psi}
\item{
$\eps\|\psi_\eps^\prime\|_{L^\infty(0,1)}$ is uniformly bounded with respect to $\eps\to0$.
}\label{item:unifbnd_mod2_psiPri}
\end{enumerate}
\begin{proof} 
For readability of this proof, we omit the index $\eps$ in $\psi_\eps$ and $\hat{\psi}_\eps$.
As discussed in Section \ref{sec:ana_prob}, the solution $\psi$ to the BVP \eqref{bvp:mod2} can be obtained by scaling the solution $\hat{\psi}$ to the IVP \eqref{ivp:tp_mod2} with the constant $\alpha$ from \eqref{eqn:alpha_scal}, i.e.\ $\psi = \alpha\,\hat{\psi}$. With some (fixed) $x_0\in(0,x_1)$ we make a case distinction:

\noindent{\it\underline{Region $x_0\leq x\leq 1$:}}\\
For $x\ge x_0$ we consider the IVP \eqref{ivp:tp_mod2_W} first with a generic initial condition at $x_0$. For such $x$ we have $a(x)\geq\tau_3>0$, and therefore the transformation matrices $A(x)$ and its inverse are uniformly bounded w.r.t.\ $x$ and $\eps$. Denoting by $W$ the vector valued solution to this IVP, this matrix bound implies equivalence (uniform in $\eps$) of the norms of the two vectors ${W}(x)$ and $({\psi}(x),\eps{\psi}^\prime(x))^\top$. Note that this equivalence would not hold for the choice $x_0=0$. Analogously as in the proof of Lemma \ref{lem:unif_bnd_W} we obtain 
\begin{equation}\label{eqn:ul_est_hatW_x0}
\|{W}(x_0)\|e^{-\eps\int_{x_0}^x|\beta(y)| dy}\leq\|{W}(x)\|\leq \|{W}(x_0)\|e^{\eps\int_{x_0}^x|\beta(y)| dy}\,,\quad 0<x_0\leq x\,.
\end{equation}

{\it\underline{ Step 1:}} 
First we consider an auxiliary problem that corresponds to the ``pure'' Airy function solution: Let $\tilde W$ be the solution of the IVP \eqref{ivp:tp_mod2_W} on $x\ge x_0$ with $\eps =1$, $a(x) = x$ and the initial condition 
\begin{equation*}
  \tilde{W}(x_0) = A_{\eps=1}(x_0) \mtrx{c}{\Ai(-x_0)\\ -{\Ai}^\prime(-x_0)} \,,
\end{equation*}
obtained from \eqref{ivp:tp_mod2}. The solution reads
\begin{equation*}
\tilde{W}(x) = \mtrx{c}{a^{1/4}\tilde{\psi}(x)\\\frac{(a^{1/4}\tilde{\psi})^\prime(x)}{\sqrt{a(x)}}} = \mtrx{c}{x^\frac{1}{4}\Ai(-x) \\ \frac{1}{4}x^{-\frac{5}{4}}\Ai(-x)-x^{-\frac{1}{4}}{\Ai}^\prime(-x)} \,,\quad x \in [x_0,\infty)\,.
\end{equation*}
Then the estimates \eqref{eqn:ul_est_hatW_x0} with $\int_{x_0}^x|\beta(y)|dy = \frac{5}{48}(x_0^{-\frac{3}{2}} - x^{-\frac{3}{2}})\le \frac{5}{48} x_0^{-\frac{3}{2}}$ yield
\begin{equation}\label{eqn:ul_est_tildeW_x0}
d_{x_0}\|\tilde{W}(x_0)\|\leq \|\tilde{W}(x) \|\leq c_{x_0}\|\tilde{W}(x_0)\|\;,\quad x\in[x_0,\infty)\,,
\end{equation}
with the constants defined as 
\begin{equation*}
  d_{x_0}:=e^{-\int_{x_0}^\infty|\beta(y)| dy} \,,\quad
  c_{x_0}:= e^{\int_{x_0}^\infty|\beta(y)| dy} \,.
\end{equation*}

{\it\underline{ Step 2:}} 
Let $\hat W$ be the solution of the IVP \eqref{ivp:tp_mod2_W} on $x_0\le x\le x_1$ with $0<\eps \le1$, $a(x) = x$ and the initial condition 
\begin{equation*}
  \hat{W}(x_0) = A_{\eps}(x_0) \mtrx{c}{\eps^{-\frac{1}{6}}\Ai(-\tfrac{x_0}{\eps^{2/3}})\\ -\eps^{\frac16}\Ai'(-\tfrac{x_0}{\eps^{2/3}})} \,.
\end{equation*}
Then the vector valued solution corresponds to the scaled Airy function:
\begin{equation}\label{eqn:hatW}
\hat{W}\left(x\right) = \mtrx{c}{
x^\frac{1}{4}\eps^{-\frac{1}{6}}\Ai\left(-\frac{x}{\eps^{2/3}}\right) \\
 \frac{1}{4}x^{-\frac{5}{4}}\eps^\frac{5}{6}\Ai\left(-\frac{x}{\eps^{2/3}}\right)-x^{-\frac{1}{4}}\eps^\frac{1}{6}{\Ai}^\prime\left(-\frac{x}{\eps^{2/3}}\right)}
 = \tilde{W}\left(\tfrac{x}{\eps^{2/3}}\right) \,,\quad x_0\leq x\leq x_1\,,
\end{equation}
where $x/\eps^{\frac23}\ge x_0$. Hence, \eqref{eqn:ul_est_tildeW_x0} implies
\begin{equation}\label{eqn:ul_hatWscal}
d_{x_0}\|\tilde{W}(x_0)\| \leq\|\hat{W}\left(x\right)\|\leq c_{x_0}\|\tilde{W}(x_0)\| \;,\quad x_0\leq x\leq x_1\,.
\end{equation}
Since $\|\tilde{W}(x_0)\|$ is independent of $\eps$, this proves $\|\hat{W}(x)\| = \Theta_\eps(1)$ on $[x_0,x_1]$.

{\it\underline{ Step 3:}} 
On $x_1\leq x\leq 1$ the (generic) function $a(x)$ satisfies $a(x)\geq\tau_1>0$. Hence, we obtain the following upper and lower bounds analogously to \eqref{eqn:ul_est_hatW_x0}:
\begin{equation}\label{eqn:ul_est_hatW_x1}
d_{x_1}\|\hat{W}(x_1)\|\leq\|\hat{W}(x)\|\leq c_{x_1}\|\hat{W}(x_1)\|\;,\quad x_1\leq x\leq 1\,,
\end{equation}
where $d_{x_1}:=e^{-\int_{x_1}^1|\beta(y)| dy}$ and $c_{x_1}:= e^{\int_{x_1}^1|\beta(y)| dy}$. Since \eqref{eqn:ul_hatWscal} particularly holds for $\|\hat{W}(x_1)\|$, the bounds in \eqref{eqn:ul_est_hatW_x1} yield $\|\hat{W}(x)\| = \Theta_\eps(1)$ on $[x_1,1]$.
Since the norms of $\hat{W}(x)$ and $(\hat{\psi}(x),\eps\hat{\psi}^\prime(x))^\top$ are ($\eps$ uniformly) equivalent, we get 
\begin{equation}\label{eqn:ul_hatPsi_theta}
\left\|\mtrx{c}{\hat{\psi}(x)\\ \eps\hat{\psi}^\prime(x)}\right\| = \Theta_\eps(1)\,,\quad x_0\leq x\leq 1\,.
\end{equation}
This yields the asymptotic behavior of the scaling constant $\alpha$ from \eqref{eqn:alpha_scal}: \begin{equation}\label{eqn:asy_alpha}
\alpha(\hat{\psi}(1),\hat{\psi}^\prime(1)) = \Theta_\eps(1)\,.
\end{equation}
In the region $x_0\leq x\leq1$, this yields for the vector solution of the BVP \eqref{bvp:mod2}
\begin{equation}\label{eqn:unif_bnd_notp}
\left\|\mtrx{c}{\psi(x)\\ \eps\psi^\prime(x)}\right\| = |\alpha|\left\|\mtrx{c}{\hat{\psi}(x)\\ \eps\hat{\psi}^\prime(x)}\right\| = \Theta_\eps(1)\,,\quad x_0\leq x\leq 1\,.
\end{equation}

\noindent{\it\underline{Region $0\leq x\leq x_0$:}}\\
\indent{\it\underline{ Step 4:}} 
Note that the solution to the BVP \eqref{bvp:mod2}, extended to $[0,1]$, exhibits an asymptotic blow-up at the turning point $x=0$: As  $\hat{\psi}(x)= \eps^{-\frac{1}{6}}\Ai(-\frac{x}{\eps^{2/3}})$ on $[0,x_1]$, it holds that
\begin{equation}\label{eqn:asy_blowup_tp}
|\psi_\eps(0)| = |\alpha\,\eps^{-\frac{1}{6}}\Ai(0)| = \Theta(\eps^{-\frac{1}{6}})\,.
\end{equation} 
Moreover, the following proves that $\max_{x\in[0,x_0]}|\psi_\eps(x)| = \Theta(\eps^{-\frac{1}{6}})$:
The Airy function $\Ai$ is continuous and bounded on $\R$. It attains its unique maximum at some $y_{max}^{\Ai}\approx -1.01879$ , i.e.\  
\begin{equation*}
\argmax_{y\in\R}\Ai\left(y\right) = y_{max}^{\Ai}\;.
\end{equation*} 
The maximum of $\Ai(-\frac{x}{\tilde{\eps}^{2/3}})$, located at $x_{max}^{\Ai} = ~{-\tilde{\eps}^{2/3}y_{max}^{\Ai}}$, lies inside $[0,x_0]$ for $\tilde{\eps}$ sufficiently small. Hence, $\max_{x\in[0,x_0]} \Ai(-\frac{x}{\eps^{2/3}}) =  M:=\Ai\left(y_{max}^{\Ai}\right)$ is constant for $0<\eps\leq\tilde{\eps}$.
Since $\alpha=\Theta_\eps(1)$ we get
\begin{equation}\label{eqn:psi_norm_blowup}
\max_{x\in[0,x_0]}|\psi_\eps(x)|= \alpha\,\eps^{-\frac{1}{6}} M=\Theta(\eps^{-\frac{1}{6}})\;,\quad 0<\eps\leq\tilde{\eps}\;.
\end{equation}
Hence, \eqref{eqn:unif_bnd_notp} together with \eqref{eqn:psi_norm_blowup} yields
\begin{equation*}
\|\psi_\eps\|_{L^\infty(0,1)} = \Theta(\eps^{-\frac{1}{6}})\,,
\end{equation*}
thus proving \ref{item:nonunifbnd_mod2_psi}.

{\it\underline{ Step 5:}} 
To prove the uniform bound on the $\eps$--scaled derivative, we use the asymptotic representation \eqref{eqn:asy_rep_airy} for ${\Ai}^\prime$ and the fact that 
\begin{equation*}
\eps\psi_\eps^\prime(x) = -\alpha\eps^\frac{1}{6}{\Ai}^\prime\left(-\tfrac{x}{\eps^{2/3}}\right)\,.
\end{equation*}
Fix some $z_0>0$ for another case distinction. With $z=\frac{x}{\eps^{2/3}}$ the asymptotic representation \eqref{eqn:asy_rep_airy} holds for $z\geq z_0>0$. I.e.\ there exists a $c_1>0$ such that for $z_0\eps^\frac{2}{3}\leq x\leq x_0$ it holds that 
\begin{equation*}
|\eps\psi_\eps^\prime(x)|=\left|\alpha\eps^\frac{1}{6}{\Ai}^\prime\left(-\tfrac{x}{\eps^{2/3}}\right)\right|\leq \left|\tfrac{\alpha x^\frac{1}{4}}{\sqrt{\pi}} \left(\sin(\xi(x)) + x^{-\frac{3}{2}}\calO(\eps)\right)\right|\leq c_1\,, 
\end{equation*}
since $\alpha = \Theta_\eps(1)$ and $x^{-5/4}\eps \leq z_0^{-5/4}\eps^{1/6}\leq c_2$ for some $c_2>0$. For small arguments $0\leq z\leq z_0$ (i.e.\ close to the turning point) it holds that $|{\Ai}^\prime(-z)|\leq c_3$ for some $c_3>0$, as ${\Ai}^\prime$ is continuous on a compact set.  Hence, for $0\leq x\leq z_0\eps^\frac{2}{3}$ it holds that
\begin{equation*}
|\eps\psi_\eps^\prime(x)|=\left|\alpha\eps^\frac{1}{6}{\Ai}^\prime\left(-\tfrac{x}{\eps^{2/3}}\right)\right|\leq |\alpha|\eps^\frac{1}{6}c_3\leq c_4\,, 
\end{equation*}
for some $c_4>0$. All four constants $c_1,\ldots,c_4$ are independent of $\eps$ yielding \linebreak
$\eps\|\psi_\eps^\prime\|_{L^\infty(0,x_0)}=\calO(1)$ for $\eps\to0$. Together with \eqref{eqn:unif_bnd_notp} this yields an overall uniform bound, i.e.\
\begin{equation*}
\eps\|\psi_\eps^\prime\|_{L^\infty(0,1)} = \calO_\eps(1)\,.
\end{equation*}
\end{proof}
\end{proposition}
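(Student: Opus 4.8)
The plan is to exploit the representation $\psi_\eps = \alpha\,\hat\psi$ from Section~\ref{sec:ana_prob}, reducing both claims to (i) an $\eps$--uniform two-sided bound on the scaling constant $\alpha$ from \eqref{eqn:alpha_scal}, and (ii) the explicit Airy form of $\hat\psi$ near the turning point. The crucial structural obstruction is that the transformation matrix $A(x)$ in \eqref{eqn:trafo_Ax} contains the factors $a^{\pm1/4}(x)$, which degenerate as $x\to0$; hence the norm equivalence between $\hat W$ and $(\hat\psi,\eps\hat\psi')^\top$ --- and with it the Gronwall machinery of Lemma~\ref{lem:unif_bnd_W} --- fails at the turning point. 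I therefore fix an auxiliary point $x_0\in(0,x_1)$ and split the interval into the regular region $[x_0,1]$, where $a\ge\tau_3>0$, and a shrinking neighborhood $[0,x_0]$ of the turning point treated by hand.

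First I would establish $\alpha=\Theta_\eps(1)$. On $[x_0,1]$ the matrix $A(x)$ and its inverse are $\eps$--uniformly bounded, so $\|\hat W\|\sim\|(\hat\psi,\eps\hat\psi')^\top\|$. Differentiating $\|\hat W\|^2$ exactly as in Lemma~\ref{lem:unif_bnd_W} yields an exponential two-sided bound in terms of $\int|\beta|$. The point that makes this uniform is that on the linear stretch $[x_0,x_1]$ (where $a(x)=x$) one has the exact scaling $\hat W(x)=\tilde W(x/\eps^{2/3})$ linking the problem to the fixed $\eps=1$ Airy solution $\tilde W$, together with the decay $\beta(y)=\Theta(y^{-5/2})$, so that $\int_{x_0}^\infty|\beta|\,dy$ is finite and $\eps$--independent. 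This pins $\|\hat W(x_1)\|=\Theta_\eps(1)$, and a second such estimate on $[x_1,1]$ propagates it to all of $[x_0,1]$. Feeding the resulting $\Theta_\eps(1)$ bound on $(\hat\psi(1),\eps\hat\psi'(1))^\top$ into \eqref{eqn:alpha_scal} (whose denominator cannot vanish because $\hat W$ is real) gives $\alpha=\Theta_\eps(1)$.

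With $\alpha$ controlled, part \ref{item:nonunifbnd_mod2_psi} follows by separating the two regions. On $[x_0,1]$ the bound $\|\hat W\|=\Theta_\eps(1)$ transfers to $\|(\psi,\eps\psi')^\top\|=\Theta_\eps(1)$, so $\psi_\eps$ stays uniformly bounded there. On $[0,x_0]$ I would use the explicit form $\psi_\eps(x)=\alpha\,\eps^{-1/6}\Ai(-x/\eps^{2/3})$: since $\Ai$ attains its global maximum $M$ at a fixed negative argument $y_{max}^{\Ai}$, for $\eps$ small the maximizer $-\eps^{2/3}y_{max}^{\Ai}$ lies in $[0,x_0]$, whence $\max_{[0,x_0]}|\psi_\eps|=|\alpha|\,\eps^{-1/6}M=\Theta(\eps^{-1/6})$. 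Taking the larger of the two contributions gives the sharp order $\Theta(\eps^{-1/6})$.

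Finally, for part \ref{item:unifbnd_mod2_psiPri} I would use $\eps\psi_\eps'(x)=-\alpha\,\eps^{1/6}\Ai'(-x/\eps^{2/3})$ on $[0,x_0]$ and split once more at $x=z_0\eps^{2/3}$. For $x\le z_0\eps^{2/3}$ the argument of $\Ai'$ stays in a compact set, so $|\Ai'|$ is bounded and the prefactor $\eps^{1/6}$ only helps. For $z_0\eps^{2/3}\le x\le x_0$ the asymptotic expansion \eqref{eqn:asy_rep_airy} applies, and its $\eps^{-1/6}$ prefactor exactly cancels the $\eps^{1/6}$ in front, leaving a bounded trigonometric term plus an $\calO(\eps)$ remainder. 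Combined with the $\Theta_\eps(1)$ bound already established on $[x_0,1]$, this yields $\eps\|\psi_\eps'\|_{L^\infty(0,1)}=\calO(1)$. The main obstacle throughout is the degeneracy of $A(x)$ at the turning point: it forces the artificial split at $x_0$ and makes the uniform control of $\alpha$ --- via the $\eps^{2/3}$--rescaling and the integrability of $\beta$ --- the technical heart of the argument.
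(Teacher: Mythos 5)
Your proposal is correct and follows essentially the same route as the paper's proof: the split at an auxiliary point $x_0\in(0,x_1)$, the rescaling $\hat W(x)=\tilde W(x/\eps^{2/3})$ to the fixed $\eps=1$ Airy problem combined with the $\eps$--independent integrability of $\beta$ to get $\alpha=\Theta_\eps(1)$, the maximizer of $\Ai$ entering $[0,x_0]$ for part a), and the secondary split at $z_0\eps^{2/3}$ for part b) are all exactly the paper's Steps 1--5.
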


\begin{rem}\label{rem:airy}
The proof of the above proposition illustrates the reason for the asymptotic blow-up of $|\psi_\eps(0)|$: Essentially, it stems from the $x^{-1/4}$--decay of the flipped Airy function $\Ai(-x)$ as $x\to\infty$ (note that $\tilde w_1=x^{1/4}\Ai(-x)$ satisfies \eqref{eqn:ul_est_tildeW_x0}). Close to the turning point of \emph{first order\/}, $\psi_\eps$ behaves like the scaled Airy function $\eps^{-1/6}\Ai(-x\,\eps^{-2/3})$. In the scattering model of Section \ref{sec:scat_mod}-\ref{sec:numeth_errana} this even holds exactly, and in Section \ref{sec:mod3} this will hold approximately. This $\eps$--scaling of the $x$ variable compresses this Airy function decay to the (small) interval $[0,x_1]$. At the fixed point $x_1$,  $\Ai(-x_1\eps^{-2/3})$  is proportional to $\eps^{1/6}$, which we compensated by the scaling $\eps^{-{1/6}}$ yielding an $\eps$--uniformly bounded initial condition $\hat{W}(x_1)$.  This $\eps$--uniformity is then not affected any more on the subsequent interval $[x_1,1]$, since $a(x)$ is there uniformly 
bounded away from zero. Hence, the solution propagator is $\eps$--uniformly bounded (above and below) on $[x_1,1]$, see Step 3 in the above proof. On the other hand, at the turning point $x=0$ the Airy function has the value $\Ai(0)$, i.e.\ constant w.r.t.\ $\eps$. Hence, the $\eps^{-{1/6}}$ scaling yields the asymptotic blow-up at $x=0$.
\end{rem}

This asymptotic blow-up is, for the time being, one of the key problems for extending asymptotic preserving schemes (like the WKB-method from \cite{ABN11} or the adiabatic integrators from \cite{LJL05}) up to the turning point. To mitigate this problem, yet still include the turning point into the scattering system, we made the simplifying assumption that $a(x) \equiv x$ on some (small) interval $[0,x_1]$. This way we shall match the analytic solution on $[0,x_1]$ to a numerical solution on $[x_1,1]$. The former is explicit up to a scaling factor that, however, inherits a numerical error from the approximation to $\psi(1)$. 


\section{Numerical method and error analysis}\label{sec:numeth_errana}

In this section we will review the WKB-marching method for the IVP \eqref{ivp:tp_mod2} and derive error estimates for the BVP \eqref{bvp:mod2}, extended to $[0,1]$.

We recall that the BVP \eqref{bvp:mod2} is solved in two steps: First the corresponding IVP \eqref{ivp:tp_mod2} is solved numerically; then the numerical solution is scaled according to \eqref{eqn:psi_scale}, to fit the right boundary condition. For the turning point problem we actually want to solve \eqref{bvp:mod2} on $[0,1]$. But the solution to this BVP on $[0,x_1]$ is given by a scaled Airy function as in \eqref{eqn:psiminus}. We are left with numerically approximating a solution on $[x_1,1]$ and matching the two parts at $x_1$ to obtain a solution on the whole interval. In the $2$--step solution process we incur an error from the WKB-marching method on $[x_1,1]$ and this propagates into a second error from the (inaccurate) $\alpha$--scaling of the ``Airy-solution'' on $[0,x_1]$.

The essential novelty compared to \cite{AN18} is the inclusion of a first order turning point at $x=0$. We proved in Section \ref{sec:blow_up} that the exact solution $\psi_\eps$ blows up at the turning point like $\Theta(\eps^{-\frac{1}{6}})$. Hence, one might expect that the corresponding numerical error would also be unbounded there. We recall that the numerical error stems from the $\alpha$--scaling, where $\alpha(\hat\psi,\hat\psi^\prime)$ depends on the numerically obtained approximations of $\hat\psi(1)$ and $\hat\psi^\prime(1)$. \todo{In fact, the inclusion of the turning point ``costs'' a factor $\eps^{-\frac{1}{6}}$ in the error estimates due to the blow-up of the sequence $\psi_\eps$ at $x=0$ (cp.\ the estimate \eqref{eqn:wkbmm2_errest_W} to \eqref{eqn:errest_01} below). In these estimates the negative power of $\eps$ can be compensated by restricting the step size $h$ in dependence of $\eps$. With a turning point, this $h(\eps)$--relation gets slightly less favorable. }

\todo{In the special case of an explicitly integrable phase $\phi(x)$ from \eqref{eqn:phase_beta},} the WKB-marching method is an \emph{asymptotically correct\/}\footnote{I.e.\ the numerical error decreases to zero with $\eps\to 0$, even for a fixed spatial grid.} scheme w.r.t.\ $\eps$ (with error order $\calO(\eps^3)$). This asymptotic correctness will compensate for the fact that the solution sequence $\psi_\eps$ is unbounded at the turning point $x=0$, and it will \todo{still} yield an overall 
asymptotically correct scheme\footnote{Note: This constitutes a \emph{numerical scheme\/} only for a coefficient function $a(x)$ that is linear (or even 
quadratic, as in Section \ref{sec:mod3} below) on $[0,x_1]$ for some $x_1\in(0,1)$.} for \eqref{bvp:mod2}.

To clarify the notation, we summarize it in the following table. Here the superscript $^{(\prime)}$ means that we refer to the function as well as its derivative. Let $x_1<x_2<\ldots<x_N=1$ be a grid for the numerical method on $[x_1,1]$, where $x_1>0$, and $h:=\max_{2\leq n\leq N}|x_n-x_{n-1}|$ is the step size. 

{\small
\begin{center} 
	\begin{minipage}{\linewidth} 
        \captionof{table}{Summary of notation} 
        \label{tab:notation} 
	\begin{tabular}{|c|l|c|}
      	\hline
        scalar & & domain \\
        \hline
        $\psi^{(\prime)}(x)$ & exact solution to the BVP \eqref{bvp:mod2} & $[0,1]$\\
        & (and extension to $[0,x_1]$) & \\
        $\psi_h^{(\prime)}(x)$ & numerical approximation for the BVP \eqref{bvp:mod2} & $[0,x_1]\cup\{x_1,\ldots,x_N\}$ \\
        & (and extension to $[0,x_1]$) & \\
        $\psi_-^{(\prime)}(x)$ & exact solution to \eqref{eqn:1d_schroed} satisfying the left BC in \eqref{bvp:mod2} & $[0,x_1]$\\
        $\hat{\psi}^{(\prime)}(x)$ & exact solution to the IVP \eqref{ivp:tp_mod2} & $[x_1,1]$ \\
        $\hat\psi^{(\prime)}_{h,n}$ & numerical approximation for the IVP \eqref{ivp:tp_mod2} & $\{x_1,\ldots,x_N\}$ \\
        \hline
        vectorial & & domain \\  
        \hline
        $W(x)$ & exact solution to the BVP \eqref{bvp:mod2} & $[x_1,1]$ \\
        $W_n$ & numerical approximation for the BVP \eqref{bvp:mod2} & $\{x_1,\ldots,x_N\}$ \\
        $\hat{W}(x)$ & exact solution to the IVP \eqref{ivp:tp_mod2_W} & $[x_1,1]$ \\
        $\hat{W}_n$ & numerical approximation for the IVP \eqref{ivp:tp_mod2_W} & $\{x_1,\ldots,x_N\}$ \\
        \hline
   	\end{tabular}
	\end{minipage} 
\end{center} 
 }
Occasionally we shall use a sub- or superscript $\eps$ to emphasize the $\eps$--dependence of that quantity. 

The Airy-WKB scheme for an approximation $\psi_h^{(\prime)}$ to the solution $\psi^{(\prime)}$ of the BVP \eqref{bvp:mod2} extended to $[0,1]$ consists of the following steps:\medskip

\underline{\it Step 1:}
\begin{enumerate}[label=\alph*)]
\item 
On $[0,x_1]$: The solution \eqref{eqn:psiminus} satisfying the ICs in \eqref{ivp:tp_mod2} reads
\begin{equation*}
\psi_-(x) =  \eps^{-\frac{1}{6}}\Ai\left(-\tfrac{x}{\eps^{2/3}}\right)\;;\;\;\eps\psi_-^\prime(x) =  -\eps^{\frac{1}{6}}{\Ai}^\prime\left(-\tfrac{x}{\eps^{2/3}}\right)\,.
\end{equation*}
\item 
On  $[x_1,1]$: Compute a numerical approximation $(\hat{\psi}_{h,n},\eps\hat{\psi}_{h,n}^\prime)^\top$ to the IVP \eqref{ivp:tp_mod2} on the grid $\{x_1,\ldots,x_N\}$ via the WKB-marching method.
\end{enumerate}

\underline{\it Step 2:}\medskip

Scale $\psi_-^{(\prime)}$ and $\hat{\psi}_{h,n}^{(\prime)}$ using $\alpha:=\alpha(\hat{\psi}_{h,N},\hat{\psi}_{h,N}^\prime)$ and set
\begin{equation*}
\psi_h^{(\prime)}(x) :=
\begin{cases}
\alpha\,\psi_-^{(\prime)}(x)\,, & x\in[0,x_1]\;,\\
\alpha\,\hat{\psi}_{h,n}^{(\prime)}(x)\,, & x\in\{x_1,\ldots,x_N\}\;.
\end{cases}
\end{equation*}


\subsection{The WKB-marching method on $[x_1,1]$ }\label{subsec:wkb_meth}

We shall now review the basics of the second order WKB-marching method from \cite{ABN11}:
Here the focus is on the algorithm and error estimates. The background, including motivation for the tools used in this method can be read in \cite{ABN11}. The method consists of two parts, first a transformation of the highly oscillatory problem \eqref{eqn:1d_schroed} to a smoother problem, and second the numerical discretization of said smooth problem as to obtain an $\eps$--asymptotically correct scheme.

\underline{Analytic transformation:}
The first order system \eqref{ivp:tp_mod2_W} for $\hat{W}(x)$ is transformed to the system in the variable $Z(x)$ as follows:
\begin{equation*}
Z(x) := \exp\left(-\tfrac{\im}{\eps} \Phi(x)\right)P\,\hat{W}(x),\quad x\in[x_1,1]\,,
\end{equation*}
with matrices 
\begin{equation*}
P := \frac{1}{\sqrt{2}}\mtrx{cc}{\im & 1 \\ 1 & \im}\;;\quad \Phi(x):=\mtrx{cc}{\phi(x) & 0 \\ 0 & -\phi(x)}\,,
\end{equation*}
where $\phi(x)$ is the phase function defined in \eqref{eqn:phase_beta}. \todo{For this analytic transformation we assume that $\phi$ is explicitly available (for a generalization on numerically computed phases see Remark \ref{rem5.3} below).} This yields the system
\begin{equation}\label{eqn:aban_z}
\begin{cases}
&Z'(x) = \eps N^\eps(x)\,Z(x)\,,\\
&Z(x_1) = P\hat{W}(x_1)\,,
\end{cases}
\end{equation}
where $N^\eps(x)$ is non-zero only in the off-diagonal entries
\begin{equation*}
N^\eps_{1,2}(x) = \beta(x) e^{-\frac{2\im}{\eps}\phi(x)}\,, \quad
N^\eps_{2,1}(x) = \beta(x) e^{\frac{2\im}{\eps}\phi(x)}\,.
\end{equation*}
The above system exhibits much smoother solutions compared to the system for $\hat{W}(x)$ from \eqref{ivp:tp_mod2_W}. Moreover, the strong limit of its solutions $Z_\eps$ as $\eps\to 0$ satisfies the trivial equation $Z^\prime(x) = 0$, since $N^\eps(x)$ is $\eps$--uniformly bounded. Next we recall from \cite{ABN11} a numerical scheme that is at the same time $\eps$--asymptotically correct and second order in the step size $h$.

\underline{Numerical scheme:}
The second order (in $h$) scheme is rather non-standard and developed via the second order Picard approximation of \eqref{eqn:aban_z}: 
\begin{equation*}
\todo{
Z_{n+1} = Z_n+\eps\int_{x_n}^{x_{n+1}}\!\!\!\!\!\!\!\!\!\!\!\!N^\eps(x)\,dx\,Z_n + \eps^2\int_{x_n}^{x_{n+1}}\!\!\!\!\!\!\!\!\!\!\!\!N^\eps(x)\int_{x_n}^{x}\!\!\!\!N^\eps(y)\,dy\,dx\,Z_n\,.
}
\end{equation*}
\todo{These (iterated)} oscillatory integrals \todo{(with $\phi$ assumed to be known exactly)} are then approximated using similar techniques as the \emph{asymptotic method\/} in \cite{INO06}. This yields the following scheme that is $\eps$--asymptotically correct:

For a given initial condition $Z_1:= Z(x_1)$ the algorithm reads
\begin{equation}\label{eqn:wkb_iter}
Z_{n+1} = (I+A_n^1+A_n^2)Z_n\,,\quad n = 1,\ldots,N-1\,,
\end{equation}
with the matrices $A_n^1$ and $A_n^2$ given as
\begin{align*}
& A_n^1 :=\\
 &-\im\eps^2\mtrx{cc}{0 & \!\!\!\!\!\!\!\!\!\!\!\!\!\!\!\!\!\!\!\!\!\!\!\!\beta_0(x_n)e^{-\frac{2\im}{\eps}\phi(x_n)}-\beta_0(x_{n+1})e^{-\frac{2\im}{\eps}\phi(x_{n+1})}\\ \\ \!\!\beta_0(x_{n+1})e^{\frac{2\im}{\eps}\phi(x_{n+1})}-\beta_0(x_n)e^{\frac{2\im}{\eps}\phi(x_n)} & 0 }\\
 & +\eps^3\mtrx{cc}{0 & \!\!\!\!\!\!\!\!\!\!\!\!\!\!\!\!\!\!\!\!\!\!\!\!\beta_1(x_{n+1})e^{-\frac{2\im}{\eps}\phi(x_{n+1})}-\beta_1(x_n)e^{-\frac{2\im}{\eps}\phi(x_n)}\\ \\ \!\!\beta_1(x_{n+1})e^{\frac{2\im}{\eps}\phi(x_{n+1})}-\beta_1(x_n)e^{\frac{2\im}{\eps}\phi(x_n)} & 0 }\\
& +\im\eps^4\beta_2(x_{n+1})\mtrx{cc}{0 & -e^{-\frac{2\im}{\eps}\phi(x_n)}H_1\left(-\frac{2}{\eps}S_n\right)\\ \\  e^{\frac{2\im}{\eps}\phi(x_n)}H_1\left(\frac{2}{\eps}S_n\right) & 0 }\\
& -\eps^5\beta_3(x_{n+1})\mtrx{cc}{0 & e^{-\frac{2\im}{\eps}\phi(x_n)}H_2\left(-\frac{2}{\eps}S_n\right)\\ \\  e^{\frac{2\im}{\eps}\phi(x_n)}H_2\left(\frac{2}{\eps}S_n\right) & 0 }\,,
\end{align*}

\begin{align*}
A_n^2 := &-\im\eps^3(x_{n+1}-x_n)\frac{\beta(x_{n+1})\beta_0(x_{n+1})+\beta(x_n)\beta_0(x_n)}{2}\mtrx{cc}{ 1 & 0 \\ 0 & -1}\\
& - \eps^4\beta_0(x_n)\beta_0(x_{n+1})\mtrx{cc}{ H_1\left(-\frac{2}{\eps}S_n\right) & 0 \\ \\0 & H_1\left(\frac{2}{\eps}S_n\right)}\\
& +\im\eps^5\beta_1(x_{n+1})[\beta_0(x_n)-\beta_0(x_{n+1})]\mtrx{cc}{ H_2\left(-\frac{2}{\eps}S_n\right) & 0 \\ \\ 0 & -H_2\left(\frac{2}{\eps}S_n\right)}\,.
\end{align*}
Here we used the following notations: 
\begin{align*}
\beta_0(y):= \tfrac{\beta}{2(\sqrt{a}-\eps^2\beta)}(y)\,; \quad &\beta_{k+1}(y):=\tfrac{1}{2\phi^\prime(y)}\tfrac{d\beta_k}{dy}(y)\,,\quad k=0,1,2\,;\\ \\
H_1\left(y\right):=e^{\im y}-1\,;\quad &H_2\left(y\right):= e^{\im y}-1-\im y\,,
\end{align*}
and the discrete phase increments are
\begin{equation*}
S_n:= \phi(x_{n+1})-\phi(x_n) = \int_{x_n}^{x_{n+1}}\left(\sqrt{a(\tau)} - \eps^2\beta(\tau)\right)d\tau\,,\quad n=1,\ldots,N-1\,.
\end{equation*}

In the end we obtain a sequence of vectors $Z_n$ which we have to transform back via
\begin{equation}\label{eqn:wkb_back_trafo}
\hat{W}_n = P^{-1}e^{\frac{\im}{\eps}\Phi(x_n)}Z_n\,, \quad n=1,\ldots,N\,.
\end{equation}
This yields an approximation of the solution to the vector valued system \eqref{ivp:tp_mod2_W} for $\hat{W}$.

Now let us formulate a discrete analogue of Lemma \ref{lem:unif_bnd_W}.

\begin{lemma}\label{lem:unif_bnd_Wn}
Let Assumption \ref{ass:a_eps_wkb} hold and let the initial condition $\hat{W}_1^\eps\in~\R^2$ be $\eps$--uniformly bounded above and below. Then $\exists\,\eps_1\in(0,\eps_0]$ such that the WKB-marching method for \eqref{ivp:tp_mod2_W} yields a sequence of vectors $\hat{W}_n^\eps\in~\R^2$, $n=1,\ldots,N$ that is uniformly bounded from above and below, i.e.\ 
\begin{equation}\label{eqn:ul_unif_bnd_Wn}
C_5\leq\|\hat{W}_n^\eps\|\leq C_6\,;\quad n=1,\ldots,N\;,
\end{equation} 
where the constants $C_5,C_6>0$ are independent of $0<\eps\leq\eps_1$ and of the numerical grid on $[x_1,1]$.
\begin{proof}
A proof can be carried out exactly as in \cite[Lemma 3.5]{AN18}, with the only difference that the initial condition $\hat{W}_1$ is now $\eps$--dependent (but uniformly bounded above and below).
\end{proof}
\end{lemma}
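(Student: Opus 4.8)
The plan is to reduce the assertion to a grid- and $\eps$--uniform two-sided bound on the transformed iterates $Z_n$ of the scheme \eqref{eqn:wkb_iter}, and then transport that bound back through \eqref{eqn:wkb_back_trafo}. That back-transformation is an isometry: one checks $P^*P=I$, while $e^{\frac{\im}{\eps}\Phi(x_n)}$ is diagonal with unimodular entries $e^{\pm\frac{\im}{\eps}\phi(x_n)}$, so $\|\hat W_n^\eps\|=\|Z_n\|$ for every $n$, and in particular $\|Z_1\|=\|\hat W_1^\eps\|$ is $\eps$--uniformly bounded above and below by hypothesis. Hence it suffices to find constants $0<c\le C$, independent of $\eps\in(0,\eps_1]$ and of the grid, such that $c\,\|Z_1\|\le\|Z_n\|\le C\,\|Z_1\|$ for all $n$; writing $T_n:=I+A_n^1+A_n^2$ and $Z_n=T_{n-1}\cdots T_1Z_1$, this is a uniform two-sided bound on the transfer-matrix product.

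The structural fact I would exploit, inherited from \cite{ABN11}, is that the dominant part of $A_n^1$ is its $\calO(\eps^2)$ off-diagonal term, which is a discrete difference $\tilde G_n-\tilde G_{n+1}$ of the (Hermitian, $\calO(\eps^2)$-bounded) matrix
\begin{equation*}
\tilde G_n:=-\im\eps^2\mtrx{cc}{0 & \beta_0(x_n)e^{-\frac{2\im}{\eps}\phi(x_n)}\\ -\beta_0(x_n)e^{\frac{2\im}{\eps}\phi(x_n)} & 0}\,,
\end{equation*}
while every remaining term of $A_n^1$ and $A_n^2$ is either $\calO(\eps^3)$ or carries an explicit step factor $(x_{n+1}-x_n)$, and each oscillatory factor obeys $|e^{\pm\frac{2\im}{\eps}S_n}-1|=\calO\!\big(\min(1,(x_{n+1}-x_n)/\eps)\big)$. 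Because $\tilde G_n$ is Hermitian it feeds into $\|Z_n\|^2$ at first order, so the naive bound $\prod_n\|T_n\|\le(1+\calO(\eps^2))^{N}$ is useless: with $N=\calO(1/h)$ it is not grid-uniform.

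The crux -- carried out exactly as in \cite[Lemma 3.5]{AN18} -- is therefore a summation-by-parts (Abel) transformation, e.g.\ $Y_n:=(I+\tilde G_n)^{-1}Z_n$, which absorbs the telescoping difference $\tilde G_n-\tilde G_{n+1}$ into boundary contributions of size $\calO(\eps^2)$ and produces a recursion $Y_{n+1}=(I+\tilde R_n)Y_n$ whose correction $\tilde R_n$ no longer telescopes. The decisive estimate is that $\sum_n\|\tilde R_n\|$ is bounded uniformly in both $\eps$ and the grid: the genuinely local terms sum to $\calO(\eps^3\sum_n(x_{n+1}-x_n))=\calO(\eps^3)$, while the quadratic-variation remainder generated by the Abel step satisfies $\sum_n\|\tilde G_n-\tilde G_{n+1}\|^2=\calO(\eps^4)\sum_n\min(1,(x_{n+1}-x_n)^2/\eps^2)=\calO(\eps^3)$. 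Here the $\min$-structure is exactly what rescues uniformity -- few summands when $h\gg\eps$, small individual differences when $h\ll\eps$. A discrete Gr\"onwall inequality then gives $\|Y_n\|\le e^{\calO(\eps^2)}\|Y_1\|$; choosing $\eps_1\le\eps_0$ small enough that each $I+\tilde G_n$ and each $T_n$ is invertible with $\eps$--uniformly bounded inverse, the same argument applied to $T_n^{-1}$ yields the matching lower bound, and undoing the isometries returns \eqref{eqn:ul_unif_bnd_Wn}.

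The only genuine departure from \cite[Lemma 3.5]{AN18} is that $\hat W_1^\eps$ (hence $Z_1$) now depends on $\eps$; this is harmless, since all transfer-matrix bounds above are independent of the initial datum, so $C_5,C_6$ arise simply by multiplying the grid- and $\eps$--uniform amplification factors $e^{\pm\calO(\eps^2)}$ by the assumed uniform bounds on $\|\hat W_1^\eps\|$. I expect the main obstacle to be precisely the grid-uniformity of $\sum_n\|\tilde R_n\|$: one must check that after the Abel transformation every surviving term either telescopes or is controlled by the $\min(1,(x_{n+1}-x_n)/\eps)$ bound, so that no factor of the form $\eps^{a}/h$ with $a<1$ remains -- this is what distinguishes the claimed grid-independent stability bound from a merely $h$--dependent one.
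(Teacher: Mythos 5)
Your reduction to the transformed iterates $Z_n$ via the isometry \eqref{eqn:wkb_back_trafo} (indeed $P^*P=I$ and $e^{\frac{\im}{\eps}\Phi}$ is unitary), the two-sided bound via $\|T_nz\|\ge(1-\|A_n^1+A_n^2\|)\|z\|$, and the observation that the $\eps$--dependence of $\hat W_1^\eps$ is harmless are all correct. The problem is at the crux. First, the premise that forces you into the Abel transformation is wrong: the per-step perturbation is not irreducibly $\calO(\eps^2)$. The dominant block of $A_n^1$ is $\eps^2$ times the difference $\beta_0(x_n)e^{\mp\frac{2\im}{\eps}\phi(x_n)}-\beta_0(x_{n+1})e^{\mp\frac{2\im}{\eps}\phi(x_{n+1})}$, and your own estimate $|e^{\pm\frac{2\im}{\eps}S_n}-1|\le 2|S_n|/\eps\le Ch_n/\eps$ (with $h_n:=x_{n+1}-x_n$), combined with the Lipschitz continuity of $\beta_0$, gives $\|A_n^1\|\le C\eps^2(h_n+h_n/\eps)\le C\eps h_n$; the same bookkeeping with $|H_1(y)|\le|y|$, $|H_2(y)|\le y^2/2$ at $y=\pm2S_n/\eps$ gives $\|A_n^2\|\le C\eps^3h_n$. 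Hence the ``naive'' bound you dismiss in fact closes the proof: $\prod_n\|I+A_n^1+A_n^2\|\le e^{C\eps\sum_nh_n}=e^{C\eps(1-x_1)}$, and for $\eps\le\eps_1$ with $C\eps_1\le\tfrac{1}{2}$ the matching lower bound $\prod_n(1-C\eps h_n)\ge e^{-2C\eps(1-x_1)}$, both uniform in $\eps$ and in the grid. This direct product estimate (plus the uniform two-sided bound on $\|\hat W_1^\eps\|$) is all that the cited \cite[Lemma 3.5]{AN18} argument requires; no summation by parts enters.

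Second, the detour you propose does not close as written. Conjugating the recursion by $I\pm\tilde G_n$ removes the telescoping difference but generates the products $\tilde G_n^2=\eps^4\beta_0(x_n)^2I$ and $\tilde G_{n+1}\tilde G_n$, which are $\calO(\eps^4)$ per step \emph{without} a factor $h_n$ and do not telescope; over $N=\calO(1/h)$ steps they contribute $\calO(\eps^4/h)$, which is precisely the grid-dependent amplification you set out to exclude. Your summability argument controls only $\sum_n\|\tilde G_n-\tilde G_{n+1}\|^2$, not these cross terms, and you explicitly leave the decisive check (``every surviving term either telescopes or is controlled'') open. So the proposal has a genuine gap exactly where you anticipated the difficulty --- a gap that disappears once the sharper per-step bound $\calO(\eps h_n)$ is used on the telescoped term itself.
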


In particular Lemma \ref{lem:unif_bnd_Wn} applies to the $\eps$--dependent initial condition $\hat{W}_1:=\hat{W}(x_1)$ obtained from \eqref{ivp:tp_mod2} via the transformation \eqref{eqn:trafo_system_W}. $\hat{W}_1=\Theta_\eps(1)$ because of \eqref{eqn:unif_bnd_ic_ivp} and the $\eps$--uniform equivalence of the norms $\|\hat{W}(x_1)\|$ and $\|(\hat{\psi}(x_1),\eps\hat{\psi}^\prime(x_1))^\top\|$.


\subsection{Error estimates including the turning point}\label{subsec:errest_tp}

The following result is a simple consequence of Theorem 3.1 in \cite{ABN11}. It shows that the WKB-marching method applied to the IVP \eqref{ivp:tp_mod2} ---however, with a different IC compared to \cite{ABN11}--- yields the same $h$-- and $\eps$--order as when applied to the IVP proposed in \cite{ABN11}. So we obtain:

\begin{proposition}[see Thm. 3.1 in \cite{ABN11}]\label{prop:wkbmm2}
Let Assumptions \ref{ass:coeff_a} and \ref{ass:a_eps_wkb} on the coefficient function $a(x)$ be satisfied. Then the global error of the second order WKB-marching method for the IVP \eqref{ivp:tp_mod2} satisfies
\begin{equation}\label{eqn:wkbmm2_errest_W}
\|\hat{W}(x_n)- \hat{W}_n\|\leq C\tfrac{h^\gamma}{\eps} + C\eps^3 h^2 \,;\quad 1\leq n \leq N\,;\quad\forall\eps\in(0,\eps_1]\,,
\end{equation}
with a constant $C$ independent of $n$, $h$ and $\eps$. Here, $\gamma>0$ is the order of the chosen numerical integration method for evaluating the phase integral $\phi$ from \eqref{eqn:phase_beta}.
\end{proposition}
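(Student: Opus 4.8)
The plan is to reduce the claimed estimate \eqref{eqn:wkbmm2_errest_W} to the error analysis already carried out in \cite[Thm.~3.1]{ABN11} by checking that the only structural difference---namely the $\eps$--dependent initial condition in \eqref{ivp:tp_mod2} as opposed to the fixed IC in \cite{ABN11}---does not alter the orders in $h$ and $\eps$. The WKB-marching scheme \eqref{eqn:wkb_iter}--\eqref{eqn:wkb_back_trafo} is a linear one-step method: given $Z_1=P\hat W(x_1)$ it produces $Z_n$ via multiplication by the matrices $(I+A_n^1+A_n^2)$, and the back-transformation \eqref{eqn:wkb_back_trafo} is likewise linear. Consequently the global error propagates linearly, and the per-step consistency analysis from \cite{ABN11}---which bounds the local truncation error of the Picard/asymptotic-integral approximation and the error incurred by the quadrature of $\phi$ (contributing the $h^\gamma/\eps$ term) against the $\eps^3 h^2$ term from truncating the Picard series---carries over verbatim, since that analysis is performed pointwise in $n$ and is independent of the particular value of $Z_1$.

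Concretely, first I would recall that under Assumptions~\ref{ass:coeff_a} and \ref{ass:a_eps_wkb} the coefficient $a$ is $\mathcal{C}^5$ and bounded away from zero on $[x_1,1]$, so $\beta$ and all the auxiliary functions $\beta_0,\dots,\beta_3$ entering \eqref{eqn:wkb_iter} are well-defined and bounded uniformly in $\eps$; this is exactly the regularity hypothesis needed in \cite{ABN11}. Second, I would invoke the discrete stability provided by Lemma~\ref{lem:unif_bnd_Wn}: the transition matrices satisfy $\|I+A_n^1+A_n^2\|\le 1+\calO(\eps h)$, so their products over $\calO(1/h)$ steps remain $\eps$--uniformly bounded, guaranteeing that accumulated local errors are not amplified. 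Third, the local consistency estimates of \cite{ABN11} give a one-step error of the stated mixed order; summing these $\calO(1/h)$ contributions and using the stability bound yields the global estimate \eqref{eqn:wkbmm2_errest_W}.

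The one genuinely new point to verify---and the step I expect to require the most care---is that the $\eps$--dependence of the initial vector $\hat W_1=\hat W(x_1)$ does not spoil the estimate. Here the crucial input is \eqref{eqn:unif_bnd_ic_ivp} together with the $\eps$--uniform norm equivalence of $\hat W(x_1)$ and $(\hat\psi(x_1),\eps\hat\psi'(x_1))^\top$, which shows $\|\hat W_1\|=\Theta_\eps(1)$. Because the global error of a linear scheme is controlled by the accumulated \emph{local} truncation errors rather than by the size of the initial data, and because those local errors depend on $Z_1$ only through its ($\eps$--uniformly bounded) norm, the constant $C$ in \eqref{eqn:wkbmm2_errest_W} can be chosen independent of $\eps$. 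This is precisely the remark attached to the proof of Lemma~\ref{lem:unif_bnd_Wn}, and I would make it explicit: the proof of \cite[Thm.~3.1]{ABN11} never uses the specific (there $\eps$--independent) value of the IC beyond its uniform boundedness, so the substitution of our IC leaves both the $h^\gamma/\eps$ and the $\eps^3 h^2$ terms---hence the whole conclusion---intact.
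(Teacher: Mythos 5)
Your proposal is correct, but it proves the proposition by a genuinely different mechanism than the paper. You reopen the stability-plus-consistency structure of the proof of \cite[Thm.~3.1]{ABN11} and argue that the constants there depend on the initial vector only through its norm, so the $\eps$--uniform bound \eqref{eqn:unif_bnd_ic_ivp} on the new IC suffices. The paper instead uses \cite[Thm.~3.1]{ABN11} strictly as a black box for the reference IVP \eqref{ivp:aban} (with IC $\varphi(x_1)=1$, $\eps\varphi'(x_1)=-\im\sqrt{a(x_1)}$) and transfers the result by linearity: it verifies the superposition identity $\hat\psi(x)=\Re\big(\rho_1(\eps)\varphi(x)\big)$ with $\rho_1(\eps)=\eps^{-1/6}\Ai(-x_1\eps^{-2/3})-\im\,\eps^{1/6}x_1^{-1/2}\Ai'(-x_1\eps^{-2/3})$, shows $\rho_1(\eps)=\Theta_\eps(1)$ from the Airy asymptotics \eqref{eqn:asy_rep_airy}, and then bounds $\|\hat W(x_n)-\hat W_n\|=\|\Re(\rho_1(\eps)[Y(x_n)-Y_n])\|\le|\rho_1(\eps)|\,\|Y(x_n)-Y_n\|$, since both the exact flow and the (real-vector-preserving) numerical scheme commute with this rescaling. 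What each route buys: the paper's argument is self-contained given only the \emph{statement} of the cited theorem, at the price of being tied to the specific two-dimensional structure of the IC; your argument is more robust --- it applies verbatim to any $\eps$--uniformly bounded real IC and is exactly the reasoning the authors themselves use for Lemma \ref{lem:unif_bnd_Wn} --- but it rests on an assertion about the \emph{internals} of the cited proof that you cannot verify from its statement alone. Two minor imprecisions: the crude bound $\|I+A_n^1+A_n^2\|\le 1+\calO(\eps h)$ is not what the stability argument of \cite{ABN11} actually establishes (the entries of $A_n^1$ are only $\calO(\eps^2)$ individually, and boundedness of the products follows from telescoping of the oscillatory differences, not from a naive per-step bound); and Lemma \ref{lem:unif_bnd_Wn} is a consequence of that stability rather than its source. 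Neither affects the validity of your conclusion.
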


\begin{rem}\label{rem5.3}
\todo{
	In the error analysis of \cite{ABN11}, as well as in Proposition \ref{prop:wkbmm2} above, the possible error of the matrices $A_n^1, A_n^2$ in \eqref{eqn:wkb_iter} arising from an incorrect phase $\phi$ is not taken into account. Errors of $\phi$ are only considered for the back transformation \eqref{eqn:wkb_back_trafo}. However, in the recent, more complete error analysis in \cite{AKU18}, both occurrences of the error of $\phi$ are included. It would lead to a third error term in \eqref{eqn:wkbmm2_errest_W} that is $\calO(\eps^2)$ and includes the $W^{2,\infty}$--error of the phase (see \cite[Theorem 3.2]{AKU18}). We omit this additional error term here for brevity of the presentation.
}
\end{rem}

\begin{proof}[Proof of Proposition \ref{prop:wkbmm2}]\label{prf:wkbmm2}
	The proof is using the main result \cite[Theorem 3.1]{ABN11} for the IVP 
	\begin{equation} \label{ivp:aban}
	\begin{cases}
	\eps^2\varphi^{\prime\prime}(x) + a(x) \varphi(x) = 0\;, &\quad x\in[x_1,1]\;,\\
	\varphi(x_1) = 1\;, \\
	\eps\varphi^\prime(x_1) = -\im\sqrt{a(x_1)}\;,
	\end{cases}
	\end{equation}
	and it only remains to generalize it to the initial condition in \eqref{ivp:tp_mod2}.
	Let $Y(x) = (y_1(x), y_2(x))^\top$ be the vector valued solution to \eqref{ivp:aban} after the transformation via \eqref{eqn:trafo_system_W}, and $Y_n$ is its numerical approximation at $x_n$ obtained via the WKB-marching method. Due to Theorem 3.1 of \cite{ABN11} it holds 
	\begin{equation*}
	\|Y(x_n)- Y_n\|\leq C\tfrac{h^\gamma}{\eps} + C\eps^3 h^2 \,.
	\end{equation*}
	
	Next we give a transformation formula to connect $\varphi$ with $\hat{\psi}$, the solution of \eqref{ivp:tp_mod2}. One easily verifies that
	\begin{equation}\label{eqn:trafo_phi_psi}
	\hat\psi(x) = \Re\big(\rho_1(\eps) \varphi(x)\big)\;,
	\end{equation}
	with the $\eps$--dependent constant
	\begin{equation*}
	\rho_1(\eps) := \eps^{-\frac{1}{6}}\Ai(-\tfrac{x_1}{\eps^{2/3}}) - \im\eps^{\frac{1}{6}}x_1^{-\frac{1}{2}}{\Ai}^\prime(-\tfrac{x_1}{\eps^{2/3}})\;.
	\end{equation*}
	Using the asymptotic representations \eqref{eqn:asy_rep_airy} we verify 
	\begin{equation*}
	\rho_1(\eps) = \frac{1}{x_1^{1/4}\sqrt{\pi}}\left[ \cos(\xi(x_1))-\im\sin(\xi(x_1)	) + \calO(\eps)\right] =  \Theta_\eps(1)\;, \quad \eps\to 0\;,
	\end{equation*}
	where $\xi(x):= \tfrac{2x^\frac{3}{2}}{3\eps}- \tfrac{\pi}{4}$. Thus it holds 
	\begin{eqnarray}\nonumber
	\|\hat{W}(x_n) -\hat{W}_n\| &=& \|\Re\big(\rho_1(\eps)\left[Y(x_n)-Y_n\right]\big)\|\\\nonumber
	&\leq &\left|\rho_1(\eps)\right|\| Y(x_n)-Y_n\|\\\nonumber
	&\leq & C\tfrac{h^\gamma}{\eps} + C\eps^3 h^2\;,
	\end{eqnarray}
	with $C>0$ independent of $n$, $h$ and $\eps$.
\end{proof}
\todo{
In some applications the phase $\phi$ is exactly computable: In quantum tunneling models, e.g., the crystalline heterostructure leads to a piecewise linear potential $V(x)$ (with jumps due to the contact potential difference), and hence piecewise linear $a(x)$.} In this case the $\frac{h^\gamma}{\eps}$--error term in \eqref{eqn:wkbmm2_errest_W} drops out and the scheme satisfies an $\eps$--uniform, second order in $h$ error estimate. Moreover it is even asymptotically correct with respect to $\eps$. The opposite situation, when $\phi(x)$ has to be computed numerically, will be discussed in Remark \ref{rem:phase_error} below.


\bigskip


\emph{\bf Scaling to fit the right boundary condition\/}\\
The numerical approximation $W_n$ for $n=1,\ldots,N$ to the solution vector $W(x_n)$ of \eqref{bvp:mod2} is obtained by first calculating the numerical approximation $\hat{W}_n$ for the IVP \eqref{ivp:tp_mod2_W} via the WKB-marching method. Then it is scaled with $\tilde{\alpha}:=\tilde{\alpha}(\hat{W}_N)$, i.e.\
\begin{equation*}
W_n := \tilde{\alpha}\,\hat{W}_n\;,\quad n = 1,\ldots,N\;,
\end{equation*} 
where $\hat{W}_N$ is the approximation to $\hat{W}(1)$ obtained in the last step of the WKB-scheme. Now we can give the error estimates for numerically solving the BVP \eqref{bvp:mod2} using the $\tilde{\alpha}$--scaled Airy function $\tilde{\alpha}\,\psi_-$ (with the choice $c_0:=\eps^{-\frac{1}{6}}$) on $[0,x_1]$ and the $\tilde{\alpha}$--scaled numerical solution $\tilde{\alpha}\,\hat\psi_{h,n}$ on $\{x_1,\ldots,x_N\}$, i.e.

\begin{equation}\label{eqn:psi_h2}
\psi_h(x):=
\begin{cases}\vspace{.4em}
\tilde{\alpha}\,\psi_{-}(x):=	\tilde{\alpha}\,\eps^{-\frac{1}{6}}\,\Ai\left(-\tfrac{x}{\eps^{2/3}}\right)\,,\quad &x\in[0,x_1]\;,\\
\tilde{\alpha}\,\hat\psi_{h,n}:= \tilde{\alpha}\,\hat{w}_n^1\,a(x_n)^{-\frac{1}{4}}\,,\quad &x\in\{x_1\ldots,x_N\}\;,
\end{cases}
\end{equation}
\begin{equation}\label{eqn:psiP_h2}
\eps\,\psi_h^\prime(x):=
\begin{cases}\vspace{.4em}
\tilde{\alpha}\,\psi_{-}^\prime(x): = -\tilde{\alpha}\,\eps^\frac{1}{6}\,{\Ai}^\prime\left(-\tfrac{x}{\eps^{2/3}}\right)\,,\quad &x\in[0,x_1]\;,\\
\tilde{\alpha}\,\eps\,\hat\psi_{h,n}^\prime:=\tilde{\alpha}\left[a^{1/4}(x_n)\,\hat{w}_n^2 - \tfrac{\eps\,a^\prime(x_n)}{4\,a^{5/4}(x_n)}\,\hat{w}_n^1\right]\,,\quad\!\! &x\in\{x_1\ldots,x_N\}\;.
\end{cases}
\end{equation}

We recall that the $\eps$--scaling of the Airy function on $[0,x_1]$ is important here to satisfy the IC at $x_1$ for the IVP \eqref{ivp:tp_mod2}. Next we give error estimates for the hybrid solution \eqref{eqn:psi_h2}, \eqref{eqn:psiP_h2}, i.e.\ Airy function on $[0,x_1]$ coupled to the WKB-solution on $[x_1,1]$. While our main strategy follows \S 3.5 of \cite{AN18}, the turning point at $x=0$, and thus, the unboundedness of $\psi_\eps(0)$, causes technical challenges. 

\begin{theorem}[Convergence of the Airy-WKB method]\label{thm:errest_global}
Let Assumptions \ref{ass:coeff_a} and \ref{ass:a_eps_wkb} be satisfied and $0<\eps\leq\eps_1$. Then the pair $(\psi_h,\eps\psi_h^\prime)$ satisfies the following error estimates:
\begin{enumerate}[label = \alph*)]
\item In the region $[0,x_1]$, we have 
\begin{equation}\label{eqn:errest_airy0x1}
\|e_h\|_{C[0,x_1]} \leq  C\tfrac{h^\gamma}{\eps^{7/6}} + C\eps^{\frac{17}{6}}h^2\;,\quad \eps\|e_h^\prime\|_{C[0,x_1]} \leq  C\tfrac{h^\gamma}{\eps} + C\eps^3 h^2\;,
\end{equation}
where $e_h(x) := \psi(x) - \psi_h(x)$.
\item In the region $[x_1,1]$, we have 
\begin{equation}\label{eqn:errest_wkbmm2x11}
|e_{h,n}|  + \eps|e_{h,n}^\prime| \leq  C\tfrac{h^\gamma}{\eps}+C\eps^3 h^2\;,\quad n=1,\ldots,N\;,
\end{equation}
where $e_{h,n}:= \psi(x_n)-\tilde{\alpha}\,\hat{\psi}_{h,n}$ and $e_{h,n}^\prime:= \psi^\prime(x_n)-\tilde{\alpha}\,\hat{\psi}_{h,n}^\prime$.
\item For the hybrid method on the interval $[0,1]$ we have the error estimate 
\begin{equation}\label{eqn:errest_01}
\|e_h\|_\infty  \leq C\tfrac{h^\gamma}{\eps^{7/6}} + C\eps^{\frac{17}{6}}h^2 \;,\quad
\eps\|e_h^\prime\|_\infty  \leq C\tfrac{h^\gamma}{\eps}+C\eps^3 h^2 \;,
\end{equation}
where $\|e_h\|_\infty := \max\{\|e_h\|_{C[0,x_1]}; \max\limits_{n=1,\ldots,N}|e_{h,n}|\}$.
\end{enumerate}
\end{theorem}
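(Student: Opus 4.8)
The plan is to reduce the entire estimate to a single scalar bound on the difference of scaling factors, $|\alpha-\tilde\alpha|$, and then to propagate this separately through the two regions. The crucial structural observation is the identity $\alpha(\hat\psi(1),\hat\psi^\prime(1)) = \tilde\alpha(\hat W(1))$ from \eqref{eqn:alpha_scal_W}: the exact solution uses $\tilde\alpha$ evaluated at the \emph{exact} endpoint vector $\hat W(1)$, whereas the numerical solution uses $\tilde\alpha(\hat W_N)$. By Lemma \ref{lem:unif_bnd_W} and Lemma \ref{lem:unif_bnd_Wn}, both $\hat W(1)$ and $\hat W_N$ remain $\eps$--uniformly bounded away from $0$, hence lie in the domain $\R^2\setminus B_\delta(0)$ of Lemma \ref{lem:alpha_lip_bnd} for one fixed $\delta$. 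Combining the Lipschitz bound from that lemma with the endpoint WKB error of Proposition \ref{prop:wkbmm2} (taken at $n=N$, i.e.\ $x_N=1$) then yields
\begin{equation*}
|\alpha-\tilde\alpha| = |\tilde\alpha(\hat W(1))-\tilde\alpha(\hat W_N)| \leq L_{\tilde\alpha}\,\|\hat W(1)-\hat W_N\| \leq C\tfrac{h^\gamma}{\eps}+C\eps^3 h^2\,.
\end{equation*}

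For part (a) I would exploit that on $[0,x_1]$ the exact and the numerical wave function are the \emph{same} scaled Airy function, differing only in the prefactor: from \eqref{eqn:psi_h2}--\eqref{eqn:psiP_h2} one has $e_h(x) = (\alpha-\tilde\alpha)\,\eps^{-1/6}\Ai(-x/\eps^{2/3})$ and $\eps e_h^\prime(x) = -(\alpha-\tilde\alpha)\,\eps^{1/6}{\Ai}^\prime(-x/\eps^{2/3})$. Since $\Ai$ is bounded, $|\eps^{-1/6}\Ai(-x/\eps^{2/3})|\leq C\eps^{-1/6}$, while the asymptotic representation \eqref{eqn:asy_rep_airy} together with the continuity of ${\Ai}^\prime$ near the origin (exactly as in Step 5 of Proposition \ref{prop:bnd_mod2}) gives $|\eps^{1/6}{\Ai}^\prime(-x/\eps^{2/3})|\leq C$ uniformly. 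Multiplying the $|\alpha-\tilde\alpha|$ bound by these two factors produces precisely \eqref{eqn:errest_airy0x1}: the value error picks up the blow-up factor $\eps^{-1/6}$, turning $h^\gamma/\eps$ into $h^\gamma/\eps^{7/6}$ and $\eps^3 h^2$ into $\eps^{17/6}h^2$, whereas the scaled derivative error stays at the original order because $\eps\psi_-^\prime$ is $\eps$--uniformly bounded.

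For part (b) I would work with the vector quantities and split the error into a propagation part and a scaling part,
\begin{equation*}
W(x_n)-W_n = \alpha\,\hat W(x_n)-\tilde\alpha\,\hat W_n = \alpha\,[\hat W(x_n)-\hat W_n] + (\alpha-\tilde\alpha)\,\hat W_n\,.
\end{equation*}
The first term is controlled by $|\alpha|\leq C$ (from $\alpha=\Theta_\eps(1)$, Proposition \ref{prop:bnd_mod2}) times the WKB error of Proposition \ref{prop:wkbmm2}; the second by the scaling bound above times $\|\hat W_n\|\leq C_6$ (Lemma \ref{lem:unif_bnd_Wn}). Both contributions are $\calO(h^\gamma/\eps + \eps^3 h^2)$. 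Since $a(x)\geq\tau_1>0$ on $[x_1,1]$ renders $A(x)$ from \eqref{eqn:trafo_Ax} and its inverse $\eps$--uniformly bounded, the norms of $W(x_n)-W_n$ and of $(e_{h,n},\eps e_{h,n}^\prime)^\top$ are $\eps$--uniformly equivalent, giving \eqref{eqn:errest_wkbmm2x11}. Part (c) is then immediate by taking maxima over the two regions: the value error is dominated by the $[0,x_1]$ bound (as $\eps^{-7/6}>\eps^{-1}$ and $\eps^{17/6}>\eps^{3}$ for $\eps<1$), while the two derivative errors share the same order $h^\gamma/\eps + \eps^3 h^2$.

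The conceptually delicate point—and the whole reason the theorem survives the blow-up—sits in part (a): the asymptotically correct term $\eps^3 h^2$ survives multiplication by the blow-up factor $\eps^{-1/6}$ as $\eps^{17/6}h^2$, which still tends to $0$ with $\eps$. I expect the only genuine technical care to be in (i) verifying that $\hat W(1)$ and $\hat W_N$ lie in the Lipschitz domain of $\tilde\alpha$ uniformly in $\eps$, so that Lemma \ref{lem:alpha_lip_bnd} applies with a single $\delta$, and (ii) the uniform bound on $\eps^{1/6}{\Ai}^\prime(-x/\eps^{2/3})$ over all of $[0,x_1]$, where the region near the turning point (continuity of ${\Ai}^\prime$) must be treated separately from the oscillatory tail (via \eqref{eqn:asy_rep_airy}), precisely as in Step 5 of Proposition \ref{prop:bnd_mod2}.
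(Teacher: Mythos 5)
Your proposal is correct and follows essentially the same route as the paper: the scalar bound $|\tilde{\alpha}(\hat{W}(1))-\tilde{\alpha}(\hat{W}_N)|\leq L_{\tilde{\alpha}}\|\hat{W}(1)-\hat{W}_N\|$ via Lemma \ref{lem:alpha_lip_bnd} (with $\delta=\min(C_3,C_5)$ so that Lemmas \ref{lem:unif_bnd_W} and \ref{lem:unif_bnd_Wn} place both arguments in its domain) combined with Proposition \ref{prop:wkbmm2}, the explicit Airy form of $e_h$ on $[0,x_1]$ with the Step-5 argument for $\eps^{1/6}{\Ai}^\prime$, and an add-and-subtract splitting of $W(x_n)-W_n$ on $[x_1,1]$. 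The only deviation is that you group the splitting in part (b) as $\alpha[\hat W(x_n)-\hat W_n]+(\alpha-\tilde\alpha)\hat W_n$ whereas the paper uses $(\alpha-\tilde\alpha)\hat W(x_n)+\tilde\alpha[\hat W(x_n)-\hat W_n]$; both are equivalent and rely on the same uniform bounds.
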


\begin{rem}\label{rem:phase_error}
 The $\frac{h^\gamma}{\eps}$ (or $\frac{h^\gamma}{\eps^{7/6}}$) term drops out if the phase $\phi$ in \eqref{eqn:phase_beta} is explicitly integrable, leading to a second order scheme (in $h$) that is asymptotically correct with respect to $\eps$. But when $\phi$ has to be computed numerically, e.g., via Simpson's rule where $\gamma=4$, the scheme is still second order in $h$ as long as $h$ is bounded by $\calO(\eps^{\frac{7}{12}})$. And for the (slightly) less restrictive step size bound $h=\calO(\sqrt{\eps})$, the order of the scheme reduces to $h^{\frac{5}{3}}$. As a comparison, we note that the bound $h=\calO(\sqrt{\eps})$ is well known for the WKB approximation of highly oscillatory problems without a turning point (see \cite{LJL05,ABN11}), yielding a scheme of order $h^2$. 
Using a spectral method for the phase integral allows to drastically reduce the quadrature error for $\phi$, as illustrated in \cite{AKU18}.
\end{rem}

\begin{proof}{[of Theorem \ref{thm:errest_global}]}
Within this proof, we will use Lemma~\ref{lem:alpha_lip_bnd} multiple times for arguments $\hat{W}(x_n)$ as well as $\hat{W}_n$. Thus we choose $\delta:=\min(C_3,C_5)$ with the lower bounds $C_3$ and $C_5$ on the arguments obtained in Lemma~\ref{lem:unif_bnd_W} and Lemma \ref{lem:unif_bnd_Wn}. It is crucial here that the solution vector $\hat{W}(x_n)$ of the IVP \eqref{ivp:tp_mod2_W} as well as the numerical approximation $\hat{W}_n$ are in $\R^2$ (see Lemma \ref{lem:unif_bnd_Wn}). Then the map $\tilde{\alpha}$ is Lipschitz continuous with a constant $L_{\tilde{\alpha}}$ and uniformly bounded by a constant $C_{\tilde{\alpha}}$, both independent of $0<\eps\leq\eps_0$, as stated in Lemma \ref{lem:alpha_lip_bnd}.

\begin{enumerate}[label=\alph*)]
\item{
For $x\in[0,x_1]$ we first recall that 
\begin{equation*}
\psi(x) = \alpha(\hat{\psi}(1),\hat{\psi}^\prime(1))\,\psi_{-}(x) = \tilde{\alpha}(\hat{W}(1))\,\psi_{-}(x)\,,
\end{equation*}
cf.\ \eqref{eqn:psi_scale}, \eqref{eqn:alpha_scal_W}. Hence, with \eqref{eqn:psi_h2} we have
\begin{align}\nonumber
|\psi(x)-\psi_h(x)| &= |\tilde{\alpha}(\hat{W}(1))-\tilde{\alpha}(\hat{W}_N)|\,|\psi_-(x)|\\\nonumber
&\leq L_{\tilde{\alpha}}\,\|\hat{W}(1)-\hat{W}_N\|\,\eps^{-\frac{1}{6}}\,\left|\Ai\left(-\tfrac{x}{\eps^{2/3}}\right)\right| \\\nonumber
&\leq C\,\left(\tfrac{h^\gamma}{\eps}+\eps^3 h^2\right)\,\eps^{-\frac{1}{6}}\,\left|\Ai\left(-\tfrac{x}{\eps^{2/3}}\right)\right|\\\nonumber
&\leq C\,\left(\tfrac{h^\gamma}{\eps^{7/6}}+\eps^{\frac{17}{6}} h^2\right)\,,
\end{align}
where we used the $\eps$--uniform boundedness of the (scaled) Airy function $\Ai\left(-\tfrac{x}{\eps^{2/3}}\right)$ on $\R^+$. We also note that the term $\eps^{-\frac{1}{6}}$ cannot be compensated by $\Ai\left(-\tfrac{x}{\eps^{2/3}}\right)$, since the latter term takes a constant value (independent of~$\eps$) at $x=0$. We also used the Lipschitz continuity of $\tilde{\alpha}$, in addition to Proposition \ref{prop:wkbmm2}.

For the derivative we estimate as follows:
\begin{align}\nonumber
\eps|\psi^\prime(x)-\psi_h^\prime(x)| &= \eps|\tilde{\alpha}(\hat{W}(1))-\tilde{\alpha}(\hat{W}_N)|\,|\psi_-^\prime(x)|\\\nonumber
&\leq L_{\tilde{\alpha}}\,\|\hat{W}(1)-\hat{W}_N\|\,\left|\eps^{\frac{1}{6}}{\Ai}^\prime\left(-\tfrac{x}{\eps^{2/3}}\right)\right|\\\nonumber
&\leq C\,\left(\tfrac{h^\gamma}{\eps}+\eps^3 h^2\right)\,,
\end{align}
where we used that $|\eps^{\frac{1}{6}}{\Ai}^\prime\left(-\tfrac{x}{\eps^{2/3}}\right)|\leq c$. This can be argued in the same manner as in Step 5 of the proof of Proposition \ref{prop:bnd_mod2}.
}
\item{It is convenient to use the vector notation $W$ on $[x_1,1]$. Note that scaling $\hat{W}$ with the constant $\tilde{\alpha}$ is equivalent to scaling $(\hat{\psi},\eps\hat{\psi}^\prime)$. Therefore the estimates after the $\tilde{\alpha}$--scaling are
\begin{align}\nonumber
\|W(x_n) - W_n\| &= \|\tilde{\alpha}(\hat{W}(1))\,\hat{W}(x_n) - \tilde{\alpha}(\hat{W}_N)\,\hat{W}_n\|\\\nonumber
&\leq   |\tilde{\alpha}(\hat{W}(1)) - \tilde{\alpha}(\hat{W}_N)|\,\|\hat{W}(x_n)\| + |\tilde{\alpha}(\hat{W}_N)|\,\|\hat{W}(x_n) - \hat{W}_n\|\\\nonumber
&\leq  L_{\tilde{\alpha}} \|\hat{W}(1) - \hat{W}_N\| \, C_4 + C_{\tilde{\alpha}}\,\|\hat{W}(x_n) - \hat{W}_n\|\\\nonumber
&\leq   C\,\left(\tfrac{h^\gamma}{\eps}+\eps^3 h^2\right)\,, \quad n=1,\ldots,N\,.
\end{align}
In the second to last line we used the Lipschitz continuity and boundedness of $\tilde{\alpha}$ as well as \eqref{eqn:ul_unif_bnd_W}. In the last line we used the estimate \eqref{eqn:wkbmm2_errest_W} twice.
Due to the ($\eps$--uniform) equivalence of $\|W\|$ and $\|(\psi,\eps\psi')\|$, the estimate above yields the desired bound on the interval $[x_1,1]$. 
}
\item{The overall estimate on $[0,1]$ is a combination of the previous two. 
}\end{enumerate}
\end{proof}


\subsection{Numerical results}\label{subsec:num_res}

In this subsection we will present numerical results to illustrate the error estimates of Theorem \ref{thm:errest_global} for the Airy-WKB method. 


\begin{figure}[!ht]
	\includegraphics[trim=.8cm .5cm 1.2cm 1cm,scale=0.42]{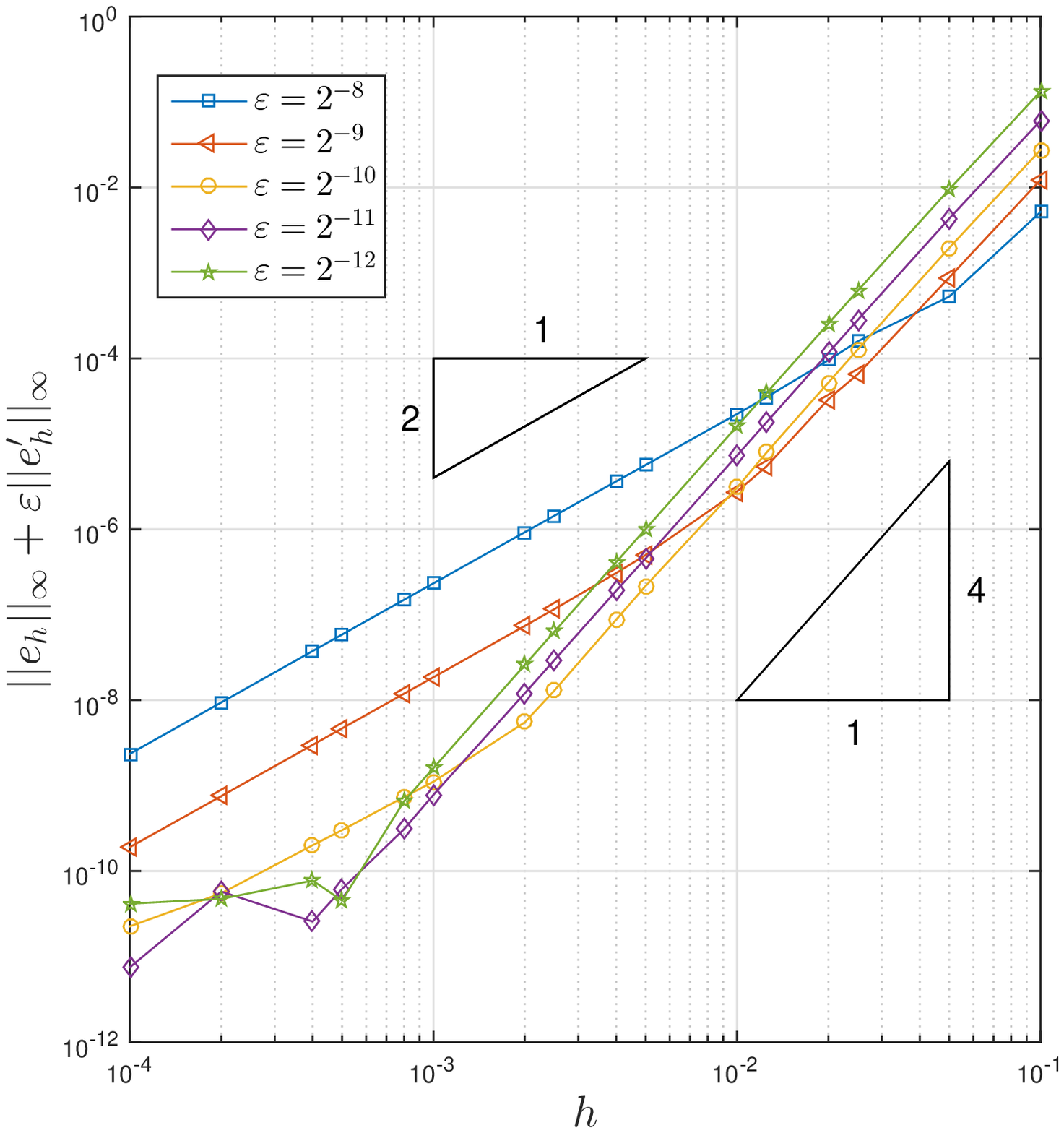}
	\hfil
	\includegraphics[trim=.8cm .5cm 1.2cm 1cm,scale=0.42]{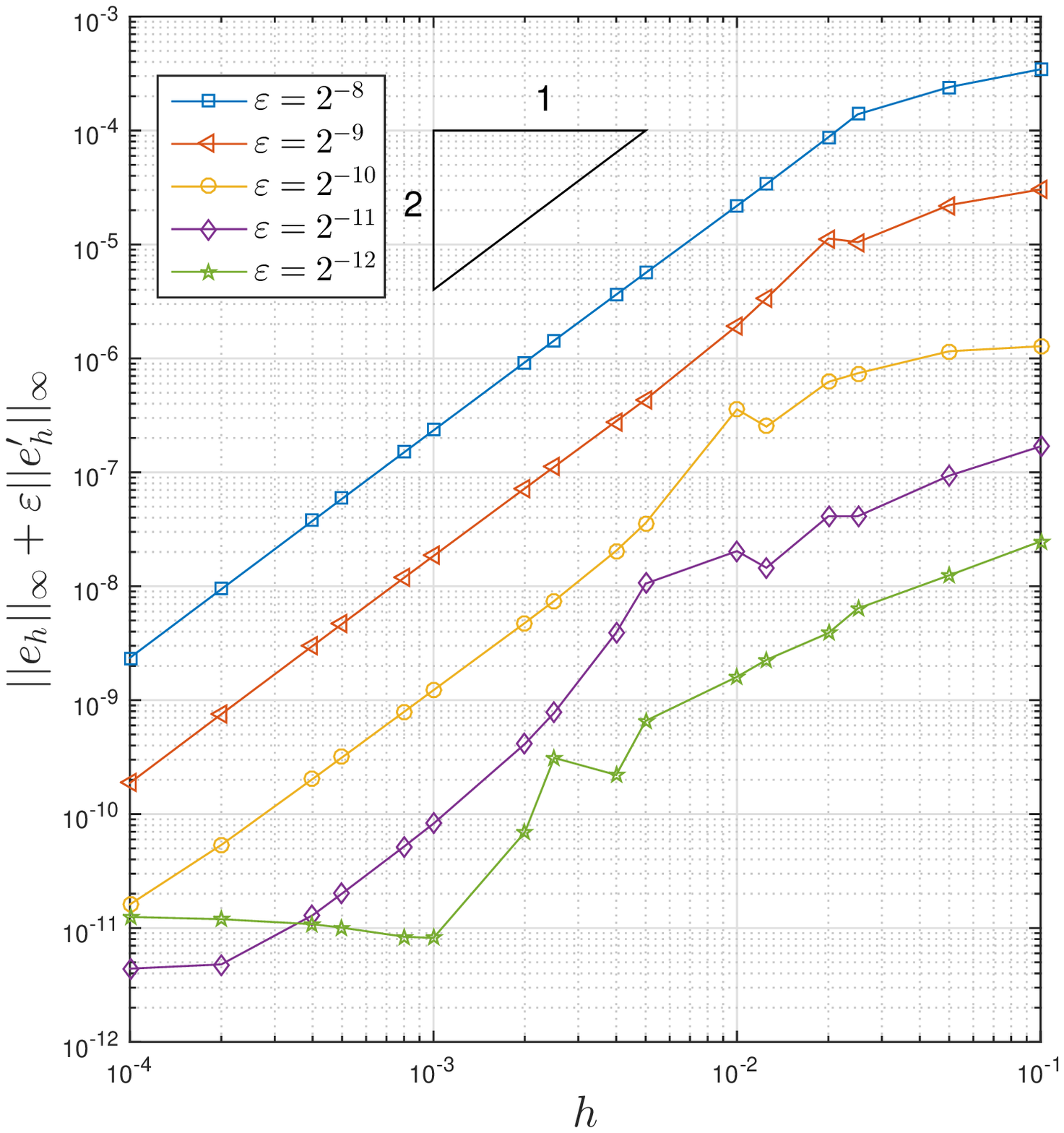}
	\caption{Absolute error on a log-log scale for the Airy-WKB method on $[0,1]$ with $x_1=0.1$ and the linear potential $a(x)=x$ for several values of $\eps$. $h$ is the step size for the WKB-marching method. \todo{On the left are the results with the phase $\phi$ computed numerically via the composite Simpson rule, and on the right the results using the explicitly known phase.}}
	\label{fig:airy-wkb-err}
\end{figure}

Fig.\ \ref{fig:airy-wkb-err} shows the error of the Airy-WKB method on $[0,1]$ with coefficient function $a(x)=x$ and ``switching-point'' $x_1=0.1$. This is a convenient test case since its solution is explicitly known (an Airy function), yet the numerical WKB method on $[x_1,1]$ is not trivial. Moreover, the integral of the phase $\phi(x)$, needed for the WKB-marching method, is explicitly available without numerical integration. \todo{For comparison we shall present two simulations, one with the exact phase $\phi(x)$ and one with a numerically computed phase (used both in \eqref{eqn:wkb_iter} and \eqref{eqn:wkb_back_trafo}).  In the right plot, the $\frac{h^\gamma}{\eps^{7/6}}$--term drops out of the error estimate  \eqref{eqn:errest_01},} yielding an asymptotically correct scheme as $\eps\to 0$ (for fixed step size $h$). 

\todo{In the left plot, the first term of \eqref{eqn:errest_01} (i.e. $\frac{h^\gamma}{\eps^{7/6}}$, originating from the numerical integration of the phase $\phi$) is dominant for large values of $h$ and/or small values of $\eps$. Hence, the error behaves like $\frac{h^4}{\eps^{7/6}}$, due to the Simpson rule with $\gamma=4$, as visualized by the right slope triangle. At $h=0.1$ we can clearly see an inversion of the error curves ($\eps=2^{-12}$ at the top and $\eps=2^{-8}$ at the bottom) with an $\eps$--dependence of almost exactly $\mathcal O(\eps^{-7/6})$. This error term originating from the numerical integration of the phase could be reduced to machine precision by using the spectral method proposed in \cite{AKU18}.}

\todo{In the right plot (and likewise in the left plot for small $h$ and large $\eps$)} we clearly observe the quadratic convergence rate in $h$ for each value of $\eps$. The error in $\eps$ is decreasing with order of about $\eps^{3.4}$ to $\eps^{3.6}$ and therefore better than the predicted estimates of order $\eps^{17/6}$ \todo{from the second term in \eqref{eqn:errest_01}}. This improved $\eps$--order of the error does not originate in the choice of a \emph{linear\/} potential, as a similar observation was already made in the error plot for the WKB-marching method in \cite[Fig.\ 3.1 (right)]{ABN11}. There, a simulation with a \emph{quadratic\/} coefficient function was chosen and the error order in $\eps$ was showing better results than the predicted order of $\eps^3$. 

\todo{For small values of both $h$ and $\eps$, the error is very small, such that it gets eventually polluted by round-off errors (due to double precision computations in Matlab). Hence, in this case the error is not showing a simple dependence on $h$ and/or $\eps$, and is around the order of $10^{-11}$.}

\begin{figure}[!ht]
	\includegraphics[scale=0.8]{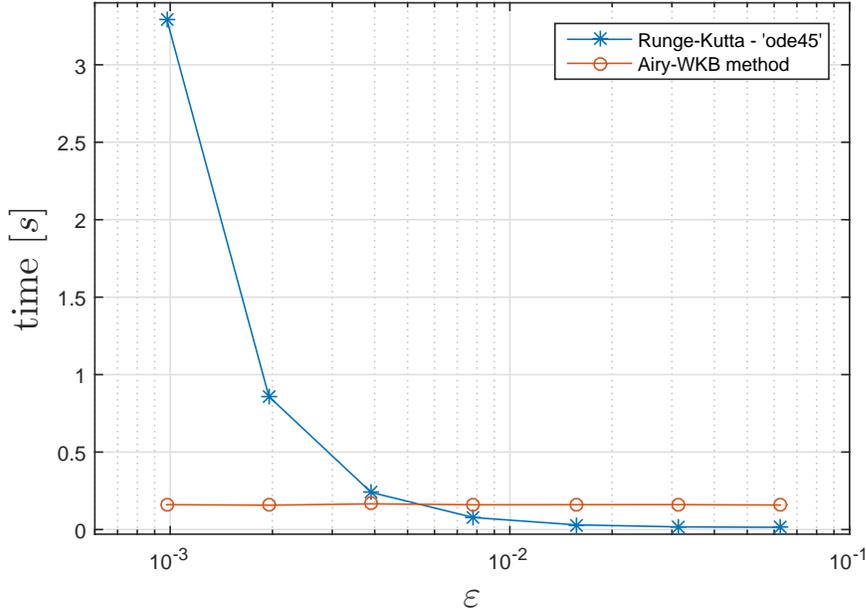}
	\caption{Run times of the Airy-WKB method in comparison to the standard Runge-Kutta Matlab solver `ode45' plotted against $\eps$ in a semi-log plot. We used the coefficient function $a(x)=x$, and $\eps = 2^{-4},\ldots,2^{-10}$. For each $\eps$, the prescribed error tolerance of `ode45' is `fitted' for the two methods to be comparable.}
	\label{fig:airy-wkb-rkode45-comp}
\end{figure}

In order to illustrate the efficiency of the Airy-WKB method, we make a comparison to a standard Runge-Kutta method, the `ode45' single step Matlab solver. This solver is based on an explicit Runge-Kutta $(4,5)$ formula (the Dormand-Prince pair). It is adaptive to attempt to optimize the step size $h$ using error estimates obtained from the comparison of a $4^{th}$ and $5^{th}$ order approximation. In this example we used (again) the linear coefficient function $a(x)=x$, such that we have the explicit formula for the exact solution at hand, in terms of the Airy function $\Ai$. Additionally, the phase is explicitly available without numerical integration, and thus it does not contribute to an additional error or increase of the run time. 
Also, in this case the WKB scheme is asymptotically correct.


The goal of this test is to compare (for several fixed values of $\eps$) the run times of the Airy-WKB and the Runge-Kutta methods, when both methods achieve (almost) the same numerical accuracy. To match the accuracies, we first determined the error of the Airy-WKB scheme, in the norm ${\|e_h\|_\infty+\eps\|e_h^\prime\|_\infty}$ (as defined in Theorem \ref{thm:errest_global}) for a grid with uniform step size $h=10^{-3}$. Then we specify an error tolerance for `ode45' (somewhat by trial and error) to obtain a matching approximation error. Fig.\ \ref{fig:airy-wkb-rkode45-comp} then gives a comparison of the run times: As expected, the Airy-WKB method's run times stay constant for decreasing $\eps$, since we leave the step size $h$ fixed. But the Runge-Kutta method's run times grow strongly for smaller $\eps$. This is a consequence of the following two contributing parts. One is due to the prescribed error tolerances. These need to go down, since the Airy-WKB method has decreasing error as $\eps\to0$ (even for a fixed step size), and we need to obtain comparable errors for both methods. The second reason is that, for smaller $\eps$, the oscillatory solution to the problem exhibits higher and higher frequencies, i.e. of order $\calO(\frac{\sqrt{a}}{\eps})$. Therefore the Runge-Kutta method needs to refine the step size $h$ even more to resolve the oscillations and to meet the prescribed error tolerances. 

\begin{table}
		\centering
		\captionof{table}{Run times and errors in comparison to a standard Runge-Kutta solver.}
		\label{tab:runtimes_error_awkb_rkode45} 
		\begin{tabular}{|c||c|c||c|c|}
			\hline
			& \multicolumn{2}{|c||}{Run time:} & \multicolumn{2}{|c|}{Error:} \\
			\hline
			$\eps$ & RK-ode45 & Airy-WKB & RK-ode45 & Airy-WKB \\\hline
			$2^{-4}$ & 1.4437e-02 & 1.5811e-01 & 7.7736e-05 & 7.0079e-05 \\			
			$2^{-5}$ & 1.7168e-02 & 1.6079e-01 & 6.0012e-05 & 6.5647e-05 \\
			$2^{-6}$ & 3.0221e-02 & 1.6042e-01 & 1.4315e-05 & 1.7369e-05 \\
			$2^{-7}$ & 7.7551e-02 & 1.5987e-01 & 2.1117e-06 & 2.4898e-06 \\
			$2^{-8}$ & 2.3965e-01 & 1.6568e-01 & 2.1115e-07 & 2.3355e-07 \\
			$2^{-9}$ & 8.5833e-01 & 1.5724e-01 & 1.8105e-08 & 1.8443e-08 \\
			$2^{-10}$ & 3.2963e+00 & 1.6063e-01 & 1.3234e-09 & 1.2300e-09 \\
			\hline
		\end{tabular}
\end{table}


\section{Generalization to quadratic potentials close to the turning point}\label{sec:mod3}

The goal of this section is a generalization of the hybrid method of Section \ref{sec:scat_mod}--\ref{sec:numeth_errana} to the situation when the potential $V(x)$ is \emph{quadratic\/} instead of linear in the vicinity of the turning point (which is still of first order at $x=0$). As we shall take a similar path as in the previous sections, not all of the (analogous) motivating deductions will be repeated. We start with specifying the assumptions on the coefficient function $a(x)$ analogously to Assumption \ref{ass:coeff_a}. 

\begin{ass}\label{ass:coeff_a2}\leavevmode
Let parts  \ref{item:cont_pot}, \ref{item:pot_1tp} and \ref{item:pot_rext} of Assumption \ref{ass:coeff_a} remain unchanged. 
\begin{enumerate}[label = \alph*')]
\setcounter{enumi}{2}
\item{More general than in Section \ref{sec:scat_mod}--\ref{sec:numeth_errana}, we now assume the potential to be quadratic in the left exterior \todo{and also in a (small) neighborhood of $x=0$.} More precisely we assume that $\exists\,x_1\in(0,1)$ such that $a(x) = k_1 x^2+k_2 x$ for $x\leq x_1$ with $k_1<0$ and $k_2>-k_1x_1>0$ such that the second zero of $a(x)$ is strictly larger than $x_1$, and thus, not included in $[0,x_1]$.}\label{item:pot_lext_quad_tp}
\end{enumerate}
\end{ass}

\begin{figure}[!ht]
	\includegraphics[trim=3.5cm 18.5cm 7.5cm 5cm,scale=1.25]{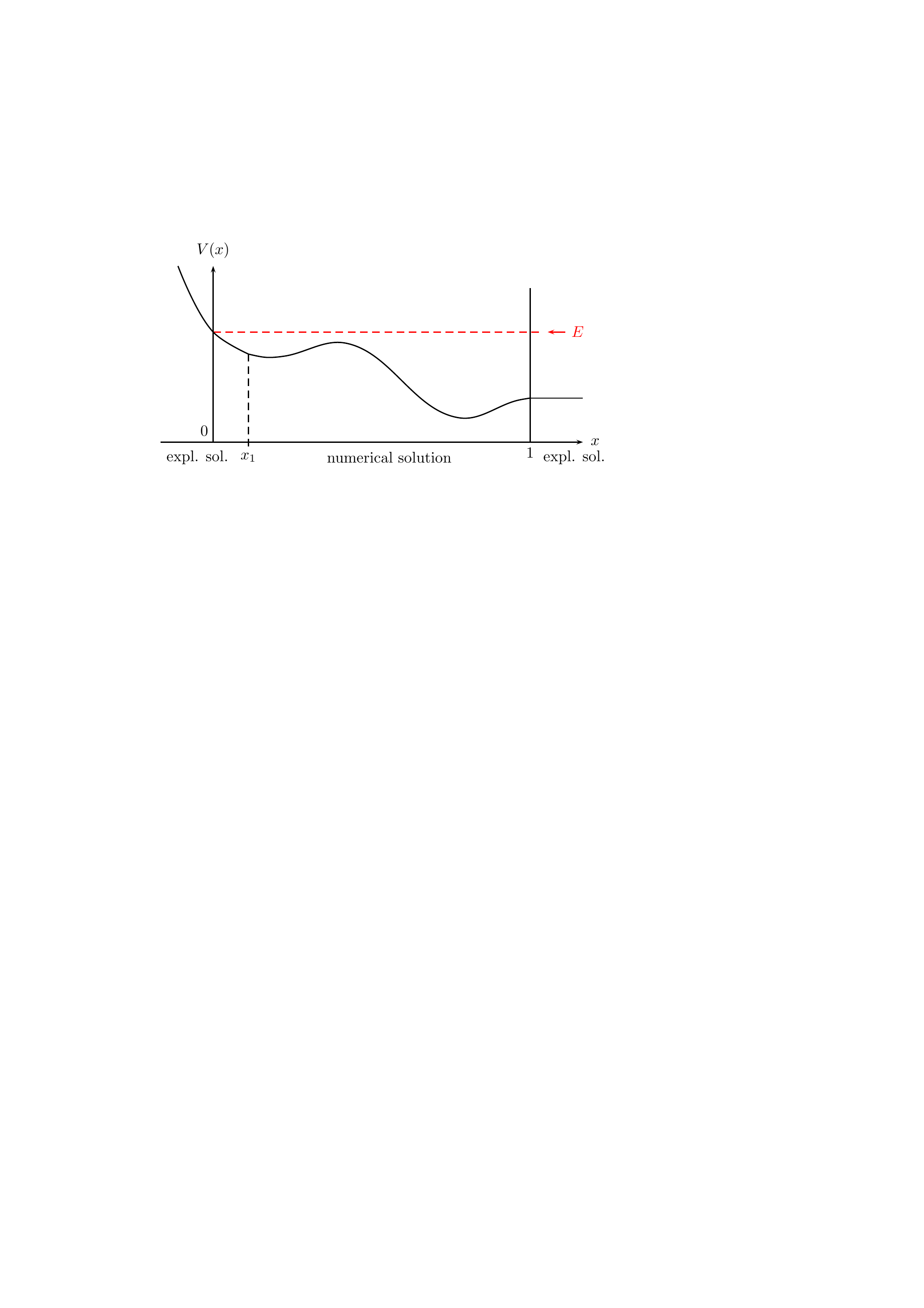}
	\caption{Sketch of the model described in Assumption \ref{ass:coeff_a2} with quadratic potential left of $x_1$. Electrons are injected from the right boundary $x=1$ and there is a turning point of first order at the left boundary $x=0$. The coefficient function is  $a(x) := E-V(x)$. The explicit solution form is available for $x\le x_1$ and for $x\ge1$; on $(x_1,1)$ the solution is obtained numerically.}
	\label{fig:potential_quad}
\end{figure}

Note that the above stated quadratic form of $a(x)$ is in its most general form as a turning point at $x=0$ requires $a(0)=0$.
We consider a potential $V(x)\to+\infty$ as $x\to -\infty$ (see Fig.\ \ref{fig:potential_quad}) which requires a scattering solution for the equation $\eps^2\psi''+a(x)\psi=0$ to decay for $x\to-\infty$. For a quadratic potential, this solution is given on $(-\infty,x_1]$ by the parabolic cylinder function (PCF) denoted by $U(\nu,z)$, cf.\ \cite[\S 12]{NHM10}. 

For $a(x) = k_1x^2+k_2x$, we have 
\begin{equation}\label{eqn:nu_zx}
\nu = -\frac{k_2^2}{8\eps\sqrt{-k_1^3}}<0 \;,\quad z(x) = \frac{k_2+2k_1x}{\sqrt{2\eps}(-k_1^3)^{1/4}}\in\R\,,
\end{equation}
and a fundamental set of solutions is $\{U(\nu,z(x)),U(-\nu,\im z(x))\}$. Here, $U(\nu,z(x))$ is the solution that stays bounded (and even decays) for $x\to-\infty$. This yields the following BVP with transparent BCs:

\begin{eqnarray}\label{bvp:mod3}
\begin{cases}
\eps^2 \psi^{\prime\prime}(x) +a(x)\psi(x) = 0\;, \quad x\in(x_1,1)\;,\\
\eps^\frac{1}{2}\psi^\prime(x_1)\,U\left(\nu,z(x_1)\right)+\sqrt{2}(-k_1)^\frac{1}{4}\,U^\prime\left(\nu,z(x_1)\right) \psi(x_1) = 0\;,\\
\eps\,\psi'(1) - \im\sqrt{a(1)}\psi(1) = -2\im\sqrt{a(1)}\;.
\end{cases}
\end{eqnarray}
Here and in the sequel the notation for the derivative of the parabolic cylinder function is $U^\prime(\nu,z)$ where the $^\prime$ always refers to the derivative w.r.t.\ the (second) argument $z$, not the order $\nu$.
An analogous result as Proposition \ref{prop:uniqueness} about existence and uniqueness of solutions holds for the BVP \eqref{bvp:mod3}.
As in Section  \ref{sec:ana_prob} this BVP will first be reformulated as the IVP
\begin{equation}\label{ivp:mod3}
\begin{cases}
\eps^2\hat\psi^{\prime\prime}(x) + a(x) \hat\psi(x) = 0\;, \quad x\in(x_1,1)\;,\\
\hat\psi(x_1) = c_1\,U\left(\nu,z(x_1)\right)\;, \\
\eps\,\hat\psi^\prime(x_1) = -c_1 \,\sqrt{2\eps}(-k_1)^\frac{1}{4}\,U^\prime\left(\nu,z(x_1)\right)\;,
\end{cases}
\end{equation}
with a constant $c_1\in\R\setminus\{0\}$ that we shall fix later. From here on we denote by $\hat{\psi}$ the solution to this IVP. The solution $\psi$ of \eqref{bvp:mod3} is then obtained by scaling $\hat{\psi}$ to fit the right BC of \eqref{bvp:mod3}, at $x=1$. Note that this scaling preserves the validity of the left BC of \eqref{bvp:mod3}, at $x_1$.


\subsection{Asymptotic blow-up at the turning point}\label{subsec:asy_tp}
In this section, we shall analyze the asymptotic behavior of solutions to the BVP \eqref{bvp:mod3} with quadratic potential close to the turning point. The following proposition will be used later to prove that the solution to \eqref{bvp:mod3} is not uniformly bounded w.r.t.\ $\eps\,$; this unboundedness arises at the turning point. 
\begin{proposition}[Asymptotics of the parabolic cylinder function]\label{prop:asy_pcf}
The function $U(\nu,z(x))$ with \eqref{eqn:nu_zx} has the following asymptotic representations for (a fixed) $x\in (0,-\frac{k_2}{k_1})$ as $\eps\to~0$:
\begin{equation}\label{eqn:asy_pcf_notp}
\begin{aligned}
U(\nu,z(x)) &\;=\; g(\mu)\left[2\left(\tfrac{1}{1-t^2}\right)^\frac{1}{4}\cos\left(\eta(t) - \tfrac{\pi}{4}\right) + \calO(\eps)\right]\,,\\[.5em]
\eps \tfrac{d}{dx}U(\nu,z(x)) &\;=\; -g(\mu)\left[\tfrac{k_2}{\sqrt{-k_1}}\left(1-t^2\right)^\frac{1}{4}\sin\left(\eta(t)-\tfrac{\pi}{4}\right) + \calO(\eps)\right]\,,
\end{aligned}
\end{equation}
with the notation
\begin{equation*}
t:=1+\tfrac{2k_1}{k_2}\,x\in(-1,1)\,, \quad \mu := \tfrac{k_2}{2(-k_1^3)^{1/4}}\eps^{-\frac{1}{2}}>0\,,
\end{equation*}
and
\begin{equation*}
\eta(t) := \tfrac{\mu^2}{2}\left(\arccos(t)-t\sqrt{1-t^2}\right)>0\,.
\end{equation*}
The ($\eps$--dependent) constant $g(\mu)$ is defined in \eqref{eqn:gmu} and \eqref{eqn:hmu}.
\end{proposition}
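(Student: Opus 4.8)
The plan is to reduce the statement to the standard uniform asymptotic expansion of the parabolic cylinder function for large order, as tabulated in \cite[\S12.10]{NHM10}, and then to unwind the change of variables. The first step is to observe that the two parameters in \eqref{eqn:nu_zx} are not independent: with $\mu = \frac{k_2}{2(-k_1^3)^{1/4}}\eps^{-1/2}$ one checks immediately that $\nu = -\tfrac12\mu^2$ and that $z(x) = \mu t\sqrt2$ with $t = 1 + \frac{2k_1}{k_2}x$. Hence $U(\nu, z(x)) = U(-\tfrac12\mu^2, \mu t\sqrt2)$, which is exactly the normalized form treated in \cite{NHM10}. Moreover, as $x$ runs through $(0, -k_2/k_1)$ the variable $t$ decreases monotonically from $1$ (at $x=0$) to $-1$ (at $x = -k_2/k_1$), so for a fixed interior $x$ we have $t \in (-1,1)$, equivalently $|z(x)| < 2\sqrt{-\nu}$. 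This is precisely the oscillatory regime of the large-order expansion, and $\mu \to \infty$ as $\eps \to 0$.

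In the oscillatory regime \cite[\S12.10]{NHM10} provides an expansion of the form $U(-\tfrac12\mu^2, \mu t\sqrt2) = g(\mu)(1-t^2)^{-1/4}\big[2\cos(\eta(t)-\tfrac\pi4)(1 + \calO(\mu^{-4})) + \calO(\mu^{-2})\big]$, where the successive terms are organized in inverse powers of $\mu^2$ and $g(\mu)$ is the normalization constant that I would fix through \eqref{eqn:gmu}--\eqref{eqn:hmu}. Since $\mu^{-2} = \frac{4\sqrt{-k_1^3}}{k_2^2}\eps = \Theta(\eps)$, truncating after the leading term incurs a relative error of order $\calO(\eps)$, which yields the first line of \eqref{eqn:asy_pcf_notp}. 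The identification of the phase is the point that requires care: I would confirm $\eta(t) = \tfrac{\mu^2}{2}(\arccos t - t\sqrt{1-t^2})$ by a Liouville--Green computation for the transformed equation. Writing the defining ODE $U_{zz} = (\tfrac14 z^2 + \nu)U$ in the variable $t$ gives $U_{tt} = -\mu^4(1-t^2)U$, whose WKB phase accumulated from the turning point $t=1$ is $\int_t^1 \mu^2\sqrt{1-s^2}\,ds = \eta(t)$; the additive $-\tfrac\pi4$ is the standard Airy connection shift across that turning point, and matching to the recessive solution fixes $g(\mu)$.

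For the derivative I would use $\eps\frac{d}{dx}U(\nu, z(x)) = \eps\frac{dz}{dx}U'(\nu, z(x))$ with $\eps\frac{dz}{dx} = -\sqrt2(-k_1)^{1/4}\sqrt\eps$, together with the companion expansion for $U'$ in \cite[\S12.10]{NHM10}. Equivalently, and as a self-contained check, one may differentiate the leading-order expression just obtained: since $\frac{dt}{dx} = \frac{2k_1}{k_2}$ and $\eta'(t) = -\mu^2\sqrt{1-t^2}$, the dominant contribution comes from differentiating the rapidly oscillating cosine, giving $\eps\frac{d}{dx}U \approx \frac{4k_1\mu^2\eps}{k_2}g(\mu)(1-t^2)^{1/4}\sin(\eta(t)-\tfrac\pi4)$. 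Using $\mu^2\eps = \frac{k_2^2}{4\sqrt{-k_1^3}}$ this prefactor simplifies to $-\frac{k_2}{\sqrt{-k_1}}$, which is exactly the second line of \eqref{eqn:asy_pcf_notp}. The terms arising from differentiating the amplitude $(1-t^2)^{-1/4}$ and from the $\calO(\eps)$ correction series contribute, after multiplication by $\eps$, only at order $\calO(\eps)g(\mu)$ for $t$ bounded away from $\pm1$, and are absorbed into the stated remainder.

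The main obstacle is bookkeeping rather than conceptual: one must match the normalization constant $g(\mu)$ and the precise branch of the phase in \cite{NHM10} to the form written in the proposition (this is where \eqref{eqn:gmu}--\eqref{eqn:hmu} enter), and one must verify that the remainders are genuinely $\calO(\eps)$ uniformly for the fixed interior point $x$ --- equivalently $t$ staying in a compact subset of $(-1,1)$ --- since the expansion degenerates near the turning points $t = \pm1$ (i.e.\ $x = 0$ and $x = -k_2/k_1$), where $(1-t^2)^{-1/4}$ blows up and the elementary-function expansion would have to be replaced by an Airy-type one.
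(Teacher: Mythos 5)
Your proof is correct, and it reaches the result by a genuinely more direct route than the paper. The paper's proof (Appendix \ref{app:prfs}) starts from the \emph{Airy-type} uniform expansion \eqref{eqn:full_asy_pcf} of \cite[\S 12.10(vii)]{NHM10}, then substitutes the large-negative-argument expansion \eqref{eqn:full_asy_airy} of $\Ai$ and $\Ai'$, and carefully multiplies the resulting Poincar\'e-type series to extract the leading $\cos(\eta-\tfrac{\pi}{4})$ and $\sin(\eta-\tfrac{\pi}{4})$ terms; you instead invoke the elementary-function expansion of \cite[\S 12.10]{NHM10} valid in the oscillatory region $t$ in a compact subset of $(-1,1)$, which already has the cosine/sine form with remainder $\calO(\mu^{-2})=\calO(\eps)$, so no product of series is needed. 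Your algebra checks out: $\nu=-\tfrac12\mu^2$, $z=\mu t\sqrt2$, $\eta'(t)=-\mu^2\sqrt{1-t^2}$, $\mu^2\eps=\tfrac{k_2^2}{4(-k_1)^{3/2}}$, and the prefactor $-k_2/\sqrt{-k_1}$ in the derivative all come out right. The paper's heavier route is not wasted effort, though: the Airy-type expansion \eqref{eqn:asy_rep_pcf_tp} is exactly what is needed later in the proof of Proposition \ref{prop:bnd_mod3}, where uniformity up to the turning point $t=1$ is essential and your elementary-function expansion degenerates (as you correctly note). Two small cautions: the Liouville--Green computation of the phase is a consistency check, not a substitute for citing the precise DLMF formula and its normalization $g(\mu)$; and term-by-term differentiation of an asymptotic expansion is not automatically justified, so for the second line you should rely on the companion expansion for $U'(\nu,z)$ (your primary suggestion) rather than on differentiating the leading term, which is fine only as the sanity check you present it as.
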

\noindent The lengthy proof is deferred to the Appendix \ref{app:prfs}.
\medskip

An essential requirement for the proofs in Section \ref{sec:ana_prob}--\ref{sec:numeth_errana} was the uniform boundedness of the initial condition $(\hat{\psi}(x_1),\eps\hat{\psi}^\prime(x_1))^\top$ w.r.t.\ $\eps$. Proposition \ref{prop:asy_pcf} shows that the initial condition in \eqref{ivp:mod3} has the asymptotic order $\calO(g(\mu))$ w.r.t.\ $\eps$, if $c_1$ is chosen independent of $\eps$. In order to obtain again $\eps$--uniform boundedness of the IC, we shall now choose $c_1$ depending on $\eps$. As $g(\mu)$ is defined as an asymptotic series and $g(\mu)=\Theta_\eps(h(\mu))$, we choose in the IC of \eqref{ivp:mod3}: 
\begin{equation*}
c_1=c_1(\eps):=\tfrac{1}{h(\mu)}=2^{\frac{1}{4}(\mu^{2}+1)}e^{\frac{1}{4}\mu^{2}}\mu^{\frac{1}{2}(1-\mu^{2})}\,,
\end{equation*}
with $h(\mu)$ defined in \eqref{eqn:hmu}. Using $\tfrac{1}{h(\mu)}$ is more practical than $\tfrac{1}{g(\mu)}$ since it can be implemented explicitly and it yields the same asymptotic scaling as $\tfrac{1}{g(\mu)}$. Then the IVP reads
\begin{equation}\label{ivp:tp_mod3}
\begin{cases}
\eps^2\hat\psi^{\prime\prime}(x) + a(x) \hat\psi(x) = 0\;, \qquad x\in(x_1,1)\;,\\
\hat\psi(x_1) = \tfrac{1}{h(\mu)} U\left(\nu,z(x_1)\right)\;, \\
\eps\hat\psi^\prime(x_1) = -\tfrac{1}{h(\mu)} \,\sqrt{2\eps}(-k_1)^\frac{1}{4}\,U^\prime\left(\nu,z(x_1)\right)\;.
\end{cases}
\end{equation}
After transforming $(\hat{\psi},\eps\hat{\psi}^\prime)$ to $\hat{W}\in\R^2 $ via the matrix $A(x)$ from \eqref{eqn:trafo_Ax} we get the vector-valued system
\begin{equation}\label{ivp:tp_mod3_W}
\begin{cases}
\hat{W}^\prime(x) = \left[\frac{1}{\eps}A_0(x)+\eps A_1(x) \right]\hat{W}(x)\;, &\quad x\in(x_1,1)\;,\\
\hat{W}(x_1)=\tfrac{1}{h(\mu)}A(x_1)\mtrx{c}{U\left(\nu,z(x_1)\right)\\ - \,\sqrt{2\eps}(-k_1)^\frac{1}{4}\,U^\prime\left(\nu,z(x_1)\right)}\in\R^2\;,
\end{cases}
\end{equation}
with the two matrices $A_0(x)$ and $A_1(x)$ as in \eqref{eqn:mtrx_a0a1}.
\medskip

A similar result as in Lemma \ref{lem:unif_bnd_W} yields the $\eps$--uniform boundedness of the analytic solution $(\hat{\psi},\eps\hat{\psi}^\prime)(x)$ to the IVP \eqref{ivp:tp_mod3} and of the vector valued solution $\hat{W}(x)$ to  \eqref{ivp:tp_mod3_W}:
\begin{lemma}\label{lem:unif_bnd_W_mod3}
Let $a(x)\in\mathcal{C}^2[x_1,1]$ and $a(x)\geq\tau_1>0$. Let $\hat\psi(x)$ be the solution to the IVP \eqref{ivp:tp_mod3}. Then $(\hat\psi(x),\eps\hat{\psi}^\prime(x))$ is uniformly bounded above and below, i.e.\ 
\begin{equation}\label{eqn:ul_unif_bnd_psi_mod3}
C_7\leq \left\|(\hat{\psi}(x),\eps\hat{\psi}^\prime(x))\right\|\leq C_8\;,\quad x\in[x_1,1]\;,
\end{equation} 
or equivalently 
\begin{equation}\label{eqn:ul_unif_bnd_W_mod3}
C_9\leq \|\hat{W}(x)\|\leq C_{10}\;,\quad x\in[x_1,1]\;,
\end{equation} 
where the constants $C_7,\ldots,C_{10}>0$ are independent of $0<\eps\leq\eps_0$.
\begin{proof}
The proof is identical to the proof of Lemma \ref{lem:unif_bnd_W}. It only remains to show the $\eps$--uniform boundedness above and below of the initial condition. As $x_1\in(0,1)$ we can use Proposition \ref{prop:asy_pcf} for an asymptotic representation of $(\hat\psi(x_1),\eps\hat{\psi}^\prime(x_1))$: With $t_1:= 1+\frac{2k_1}{k_2}x_1\in(-1,1)$ we get
\begin{equation*}
\left\|\mtrx{c}{\hat{\psi}(x_1)\\\eps\hat{\psi}^\prime(x_1)}\right\|^2 = \frac{4|f(\mu)|^2}{(1-t_1^2)^\frac{1}{2}}\bigg[|\cos(\xi(t_1))|^2+\tfrac{k_2^2(1-t_1^2)}{4(-k_1)}|\sin(\xi(t_1))|^2+\calO(\eps)\bigg]\,,
\end{equation*}
where $\xi(t):=\eta(t)-\tfrac{\pi}{4}$ and $f(\mu) := \frac{g(\mu)}{h(\mu)} = \Theta_\eps(1)$. As $\sin(\xi(t_1))$ and $\cos(\xi(t_1))$ are never simultaneously zero we get $\|(\hat{\psi},\eps\hat{\psi}^\prime)(x_1)\| = \Theta_\eps(1)$, proving uniform boundedness above and below. 
\end{proof}
\end{lemma}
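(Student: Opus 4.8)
The plan is to recycle the dynamical part of Lemma~\ref{lem:unif_bnd_W} verbatim, since the vector system \eqref{ivp:tp_mod3_W} has exactly the same structure as \eqref{ivp:tp_mod2_W}, with the identical matrices $A_0(x)$ and $A_1(x)$ from \eqref{eqn:mtrx_a0a1} and only a different (quadratic) coefficient $a(x)$. Concretely, I would first differentiate $\|\hat W(x)\|^2=|\hat w_1|^2+|\hat w_2|^2$, note that only the off-diagonal $\eps A_1$ contributes, and obtain $\big|\tfrac{d}{dx}\|\hat W\|^2\big|\le 2\eps|\beta(x)|\,\|\hat W\|^2$; integrating this differential inequality gives the two-sided Gronwall bounds $\|\hat W(x_1)\|e^{-\eps\int_{x_1}^x|\beta|}\le\|\hat W(x)\|\le\|\hat W(x_1)\|e^{\eps\int_{x_1}^x|\beta|}$ on $[x_1,1]$. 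Since $a(x)\ge\tau_1>0$ there, the transformation matrix $A(x)$ from \eqref{eqn:trafo_Ax} and its inverse are $\eps$--uniformly bounded, so $\|\hat W\|$ and $\|(\hat\psi,\eps\hat\psi^\prime)^\top\|$ are $\eps$--uniformly equivalent. Hence \eqref{eqn:ul_unif_bnd_W_mod3} (equivalently \eqref{eqn:ul_unif_bnd_psi_mod3}) follows the moment the initial vector $\hat W(x_1)$ is shown to be $\eps$--uniformly bounded above and below -- the only genuinely new point.

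For that point I would insert the asymptotics of Proposition~\ref{prop:asy_pcf} into the initial condition of \eqref{ivp:tp_mod3}. The function component is immediate: $\hat\psi(x_1)=\tfrac{1}{h(\mu)}U(\nu,z(x_1))=f(\mu)\big[2(1-t_1^2)^{-1/4}\cos(\xi(t_1))+\calO(\eps)\big]$, with $f(\mu):=g(\mu)/h(\mu)$ and $\xi(t):=\eta(t)-\tfrac{\pi}{4}$. The derivative component needs one preparatory identity: applying the chain rule with $z^\prime(x)=\tfrac{2k_1}{\sqrt{2\eps}(-k_1^3)^{1/4}}$ from \eqref{eqn:nu_zx}, the fractional powers of $\eps$ cancel and one uses $(-k_1)^{1/4}(-k_1^3)^{1/4}=-k_1$ to collapse the constant, giving $\sqrt{2\eps}(-k_1)^{1/4}U^\prime(\nu,z(x))=-\eps\tfrac{d}{dx}U(\nu,z(x))$. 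Consequently $\eps\hat\psi^\prime(x_1)=\tfrac{1}{h(\mu)}\,\eps\tfrac{d}{dx}U(\nu,z(x_1))$, and plugging in the second line of \eqref{eqn:asy_pcf_notp} yields $\eps\hat\psi^\prime(x_1)=-f(\mu)\big[\tfrac{k_2}{\sqrt{-k_1}}(1-t_1^2)^{1/4}\sin(\xi(t_1))+\calO(\eps)\big]$. Both components thus carry the common prefactor $f(\mu)$, which is $\Theta_\eps(1)$ by the scaling choice $h(\mu)$ and the stated relation $g(\mu)=\Theta_\eps(h(\mu))$.

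Squaring and adding the two components then produces exactly the displayed target
\[
\left\|\mtrx{c}{\hat\psi(x_1)\\\eps\hat\psi^\prime(x_1)}\right\|^2=\frac{4|f(\mu)|^2}{(1-t_1^2)^{1/2}}\Big[|\cos(\xi(t_1))|^2+\tfrac{k_2^2(1-t_1^2)}{4(-k_1)}|\sin(\xi(t_1))|^2+\calO(\eps)\Big].
\]
Because $t_1\in(-1,1)$ is fixed, the prefactor $\tfrac{4|f(\mu)|^2}{(1-t_1^2)^{1/2}}$ is $\Theta_\eps(1)$, and since $\cos(\xi(t_1))$ and $\sin(\xi(t_1))$ never vanish simultaneously the bracket stays strictly between two positive constants once $\eps$ is small enough to absorb the $\calO(\eps)$ remainder; this delivers the sought $\Theta_\eps(1)$ bound on $\hat W(x_1)$ and closes the argument. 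The main obstacle I anticipate is purely the bookkeeping in the derivative identity -- making the half-integer powers of $\eps$ and the quarter-integer powers of $-k_1$ collapse cleanly -- together with checking that $f(\mu)=g(\mu)/h(\mu)$ is a genuine $\eps$--uniform $\Theta$, which rests on the definitions \eqref{eqn:gmu}, \eqref{eqn:hmu}; everything else transfers mechanically from Lemma~\ref{lem:unif_bnd_W}.
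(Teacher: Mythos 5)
Your proposal is correct and follows essentially the same route as the paper: recycle the Gronwall/norm-equivalence argument from Lemma~\ref{lem:unif_bnd_W} and reduce everything to the $\eps$--uniform two-sided bound on the initial vector, which you obtain from Proposition~\ref{prop:asy_pcf} and the non-simultaneous vanishing of $\sin(\xi(t_1))$ and $\cos(\xi(t_1))$. Your explicit chain-rule computation showing $\sqrt{2\eps}(-k_1)^{1/4}U^\prime(\nu,z(x))=-\eps\tfrac{d}{dx}U(\nu,z(x))$ is a correct (and useful) spelling-out of a step the paper leaves implicit, and your squared-norm identity matches the paper's displayed formula exactly.
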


Next we illustrate that the solutions of the BVP \eqref{bvp:mod3} become unbounded at the turning point $x=0$, analogously to Example \ref{exam:airy}. First we consider
\begin{exam}\label{exam:pcf}
Consider \eqref{bvp:mod3} with $x_1=0$ and $a(x) = x-\frac{x^2}{2}$ for $x\in[0,1]$ and $0<\eps<1$. Then the explicit solution reads
\begin{equation}\label{eqn:exampcf_bvp_mod3}
\psi_\eps(x) = \frac{2}{U(\nu,0)-\im\sqrt{\eps}\,2^{3/4}\,U^\prime(\nu,0)}U(\nu,z(x))\,,
\end{equation}
where 
\begin{equation*}
\nu = -\tfrac{1}{\sqrt{8}\eps}\,,\quad z(x) = \tfrac{2^\frac{1}{4}}{\sqrt{\eps}}(1-x)\,.
\end{equation*}

\begin{figure}[!ht]
	\includegraphics[scale=0.63]{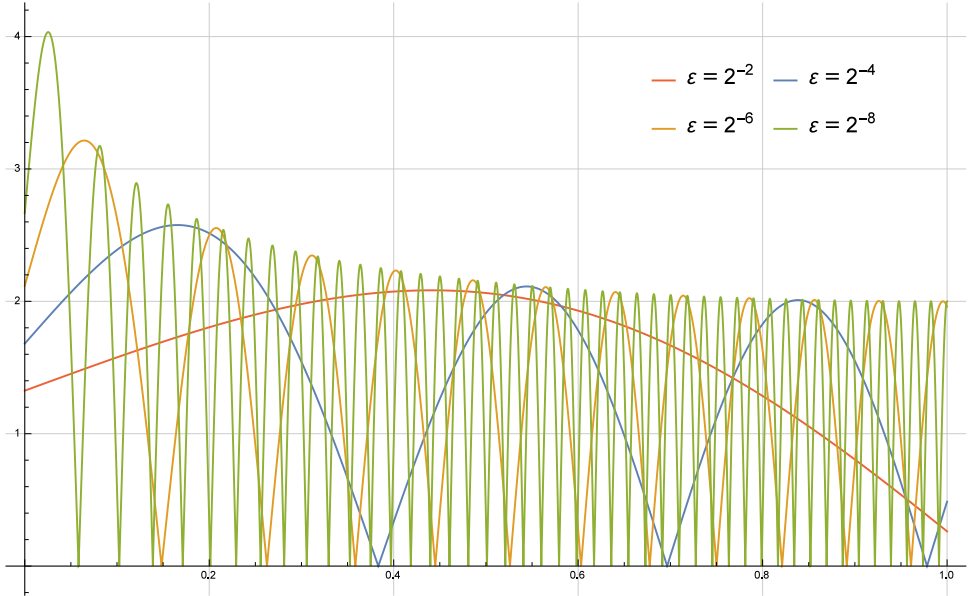}
	\caption{$|\psi_\eps(x)|$ for various values of $\eps$. At the turning point $(x=0)$ it increases with the order $\eps^{-\frac{1}{6}}$.}
	\label{fig:tp_non_epsunif_psi2}
\end{figure}
\begin{figure}[!ht]
	\includegraphics[scale=0.63]{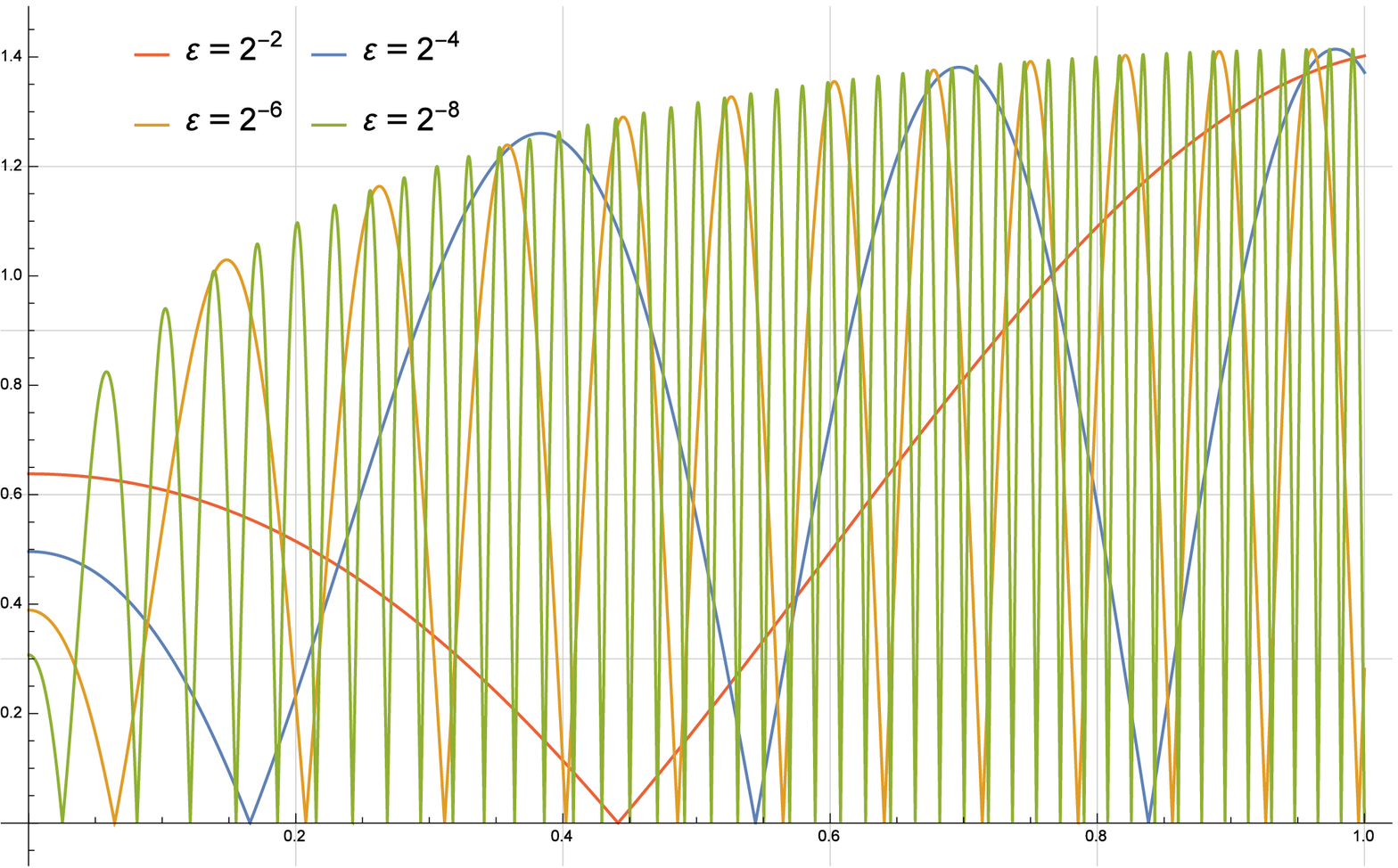}
	\caption{$\eps|\psi_\eps^\prime(x)|$ for various values of $\eps$.}
	\label{fig:tp_epsunif_psiPri2}
\end{figure}

\noindent The quadratic coefficient $a(x) = x-\tfrac{x^2}{2}$ has a turning point at $x=0$ and in Fig.\ \ref{fig:tp_non_epsunif_psi2} the absolute value of solutions $|\psi_\eps(x)|$ is plotted for various $\eps>0$: The solutions $|\psi_\eps(x)|$ are unbounded as $\eps\to 0$ at the turning point $x=0$. Fig.\ \ref{fig:tp_epsunif_psiPri2} shows the plot of the family $\{\eps|\psi_\eps^\prime(x)|\}$, which is bounded on $[0,1]$ uniformly in $\eps$. It is even decreasing (in $\eps\to 0$) at $x=0$.
\end{exam}

The generalization of Example \ref{exam:pcf} to potentials as in Assumption \ref{ass:coeff_a2} is the main result of the following proposition.

\begin{proposition}\label{prop:bnd_mod3}
Let $x_1\in(0,1)$ and $a(x)$ be as in Assumption \ref{ass:coeff_a2} and $\mathcal{C}^2$ on $[x_1,1]$. 
Then the family of solutions $\{\psi_\eps(x)\}$ to the BVP \eqref{bvp:mod3}, extended with the PCF solution on $[0,x_1]$, satisfies:
\begin{enumerate}[label = \alph*)]
\item{
$\|\psi_\eps\|_{L^\infty(0,1)}$ is of the (sharp) order $\Theta(\eps^{-\frac{1}{6}})$ for $\eps\to0$.
}\label{item:nonunifbnd_mod3_psi}
\item{
$\eps\|\psi_\eps^\prime\|_{L^\infty(0,1)}$ is uniformly bounded with respect to $\eps\to0$.
}\label{item:unifbnd_mod3_psiPri}
\end{enumerate}
\end{proposition}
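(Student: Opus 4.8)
The plan is to mirror the proof of Proposition~\ref{prop:bnd_mod2}, splitting $[0,1]$ at a fixed interior point $x_0\in(0,x_1)$ and treating the region $[x_0,1]$ \emph{away from} the turning point separately from the turning-point region $[0,x_0]$. Throughout, $\psi_\eps=\alpha\,\hat\psi$ with $\alpha$ from \eqref{eqn:alpha_scal}, and $\hat\psi=\tfrac{1}{h(\mu)}U(\nu,z(\cdot))$ on $[0,x_1]$, where $z(x)=\sqrt2\,\mu\,t$ and $\nu=-\tfrac12\mu^2$ in the notation of Proposition~\ref{prop:asy_pcf}.

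On $[x_0,1]$ I would first anchor $\|\hat W(x_0)\|=\Theta_\eps(1)$. Since $x_0\in(0,-k_2/k_1)$ lies in the oscillatory regime $t\in(-1,1)$, Proposition~\ref{prop:asy_pcf} applies at $x_0$ and, repeating the computation in the proof of Lemma~\ref{lem:unif_bnd_W_mod3} but evaluated at $x_0$, gives $\|(\hat\psi(x_0),\eps\hat\psi'(x_0))\|=\Theta_\eps(1)$, because $\cos(\xi(t_0))$ and $\sin(\xi(t_0))$ never vanish simultaneously, with $t_0=1+\tfrac{2k_1}{k_2}x_0$ bounded away from $\pm1$. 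On $[x_0,1]$ the coefficient $a(x)=k_1x^2+k_2x$ is bounded away from zero (its zeros are $x=0$ and $x=-k_2/k_1>x_1$, and $a\geq\tau_1>0$ on $[x_1,1]$), so $\beta$ is bounded there and the Gronwall estimate of Lemma~\ref{lem:unif_bnd_W} propagates $\|\hat W(x)\|=\Theta_\eps(1)$ to all of $[x_0,1]$. As in \eqref{eqn:asy_alpha}--\eqref{eqn:unif_bnd_notp}, this yields $\alpha=\Theta_\eps(1)$ and $\|(\psi_\eps,\eps\psi_\eps')\|=\Theta_\eps(1)$ on $[x_0,1]$, which settles both parts there and makes the $\Theta_\eps(1)$ scaling available for the turning-point region.

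The region $[0,x_0]$ is where the quadratic case genuinely departs from the Airy case, and it is the main obstacle. At $x=0$ we have $t=1$ and $z(0)=\sqrt2\,\mu=\sqrt{-4\nu}$, exactly the turning point of the parabolic cylinder equation, so the oscillatory asymptotics of Proposition~\ref{prop:asy_pcf} (valid only for $t$ bounded inside $(-1,1)$) break down and cannot supply $\psi_\eps(0)$. I would instead invoke the uniform, Airy-type asymptotics of $U(\nu,z)$ near its turning point (cf.\ \cite[\S12.10]{NHM10}). Heuristically, the oscillatory amplitude $(1-t^2)^{-1/4}$ of Proposition~\ref{prop:asy_pcf} would diverge as $t\to1$, but in the transition layer $|t-1|=\calO(\mu^{-4/3})$ it saturates at order $\mu^{1/3}$; since $g(\mu)=\Theta_\eps(h(\mu))$, this gives $\hat\psi(0)=\tfrac{1}{h(\mu)}U(-\tfrac12\mu^2,\sqrt2\mu)=\Theta(\mu^{1/3})=\Theta(\eps^{-1/6})$, as $\mu=\Theta_\eps(\eps^{-1/2})$, the precise statement requiring the uniform expansion in terms of $\Ai(\mu^{4/3}\zeta(t))$ with $\zeta(1)=0$ and matching its prefactor to the normalization $h(\mu)$. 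This is the quadratic analogue of the compressed $x^{-1/4}$ Airy decay described in Remark~\ref{rem:airy}, and with $\alpha=\Theta_\eps(1)$ it reproduces \eqref{eqn:asy_blowup_tp}. To upgrade the pointwise blow-up to $\max_{[0,x_0]}|\psi_\eps|=\Theta(\eps^{-1/6})$, I would note that the transition (Airy) profile attains its maximum at some $x=\calO(\eps^{2/3})$ lying inside $[0,x_0]$ for small $\eps$, with maximal value of the same order $\Theta(\eps^{-1/6})$ --- exactly as the constant $M=\Ai(y_{max}^{\Ai})$ entered \eqref{eqn:psi_norm_blowup}. Combined with the $[x_0,1]$ bound this proves~\ref{item:nonunifbnd_mod3_psi}.

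For \ref{item:unifbnd_mod3_psiPri} I would again split $[0,x_0]$ at a scale $x\sim z_0\eps^{2/3}$. Away from the turning point, Proposition~\ref{prop:asy_pcf} gives $\eps\tfrac{d}{dx}U(\nu,z(x))=\Theta_\eps(h(\mu))\big[(1-t^2)^{1/4}\sin(\cdots)+\calO(\eps)\big]$, so after dividing by $h(\mu)$ and using $\alpha=\Theta_\eps(1)$ the quantity $\eps|\psi_\eps'|$ stays bounded; in the scaled turning-point neighborhood the Airy-type expansion (now in terms of $\Ai'$) shows that $\eps\psi_\eps'$ also remains bounded, indeed non-increasing as $\eps\to0$, consistent with Fig.~\ref{fig:tp_epsunif_psiPri2}. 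Together with \eqref{eqn:unif_bnd_notp} this gives $\eps\|\psi_\eps'\|_{L^\infty(0,1)}=\calO(1)$. The crux of the whole argument is thus the turning-point asymptotics of the PCF at $z=\sqrt{-4\nu}$: establishing that $\tfrac1{h(\mu)}U(-\tfrac12\mu^2,\sqrt2\mu)$ has order exactly $\mu^{1/3}$ (and that $\eps\psi_\eps'$ stays bounded there) genuinely requires the uniform Airy-type expansion rather than the oscillatory one of Proposition~\ref{prop:asy_pcf}.
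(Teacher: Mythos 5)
Your proposal is correct and follows essentially the same route as the paper's proof in Appendix \ref{prf:sol_order_tp}: the crux in both is the uniform Airy-type expansion of $U(\nu,z)$ from \cite[\S 12.10(vii)]{NHM10} (recorded as \eqref{eqn:full_asy_pcf} and \eqref{eqn:asy_rep_pcf_tp}), valid up to the turning point $t=1$, which gives $\tfrac{1}{h(\mu)}U(\nu,z(x))=2\sqrt{\pi}f(\mu)\varphi(t)\mu^{1/3}\Ai(\mu^{4/3}\zeta(t))+\calO(\eps)$ and hence the $\Theta(\eps^{-1/6})$ blow-up via $\mu^{1/3}=\Theta(\eps^{-1/6})$ together with the boundedness of $\eps\psi_\eps'$ from the companion expansion of $U'$. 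The only cosmetic difference is that the paper splits the interval at $x_1$ itself (Lemma \ref{lem:unif_bnd_W_mod3} on $[x_1,1]$, uniform expansion on all of $[0,x_1]$) rather than introducing an auxiliary $x_0\in(0,x_1)$.
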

The lengthy and involved proof is provided in Appendix \ref{prf:sol_order_tp}.


\subsection{Numerical method and error analysis}\label{subsec:meth_err}

In this section we extend the Airy-WKB method from Section \ref{sec:numeth_errana} to the more general case of a quadratic potential in the vicinity of the turning point. 
As the fundamental solution to the equation in \eqref{bvp:mod3} for quadratic $a(x)$ is a parabolic cylinder function (PCF), we will denote this method as PCF-WKB method.
The error estimates and convergence results will be essentially the same as for the Airy-WKB method in Section \ref{sec:numeth_errana}. 

For the notation we will again use Table \ref{tab:notation}, but instead of the BVP \eqref{bvp:mod2} and IVPs \eqref{ivp:tp_mod2}, resp.\ \eqref{ivp:tp_mod2_W} we consider the corresponding BVP \eqref{bvp:mod3} and IVPs \eqref{ivp:tp_mod3}, resp.\ \eqref{ivp:tp_mod3_W}.

With the uniformly bounded initial condition $\hat{W}(x_1)$ from \eqref{ivp:tp_mod3_W}, Lemma \ref{lem:unif_bnd_Wn} can be directly applied to the numerical approximation $\hat{W}_n^\eps$ obtained via the WKB-marching method. Hence, there exists $\eps_1\in(0,\eps_0]$ and $\eps$--independent constants $C_{11},C_{12}>0$ such that
\begin{equation}\label{eqn:unif_bnd_Bn3}
C_{11}\leq \left\|\hat{W}_n^\eps\right\|\leq C_{12}\,,\quad n=1,\ldots,N\,,\quad 0<\eps<\eps_1\,.
\end{equation}

Next we want to carry over the error estimates of the WKB-marching method \cite{ABN11}, just like in Proposition \ref{prop:wkbmm2}. Observe that, similar to \eqref{eqn:trafo_phi_psi}, one can write the solution $\hat{\psi}(x)$ to the IVP \eqref{ivp:tp_mod3} as
\begin{equation*}
\hat{\psi}(x) = \Re(\rho_2(\eps) \varphi(x))\,,
\end{equation*}
where $\varphi$ is the solution to \eqref{ivp:aban} and 
\begin{equation*}
\rho_2(\eps) := \tfrac{1}{h(\mu)\sqrt{a(x_1)}}\left[\sqrt{a(x_1)}U(\nu,z(x_1)) - \im\sqrt{2\eps}(-k_1)^\frac{1}{4}U^\prime(\nu,z(x_1)) \right]\,.
\end{equation*}
Using Proposition \ref{prop:asy_pcf} one can see that
\begin{equation*}
\rho_2(\eps) = \tfrac{\sqrt{2k_2}f(\mu)}{a(x_1)^\frac{1}{4}(-k_1)^\frac{1}{4}}\left[\cos(\xi(t_1)) - \im\sin(\xi(t_1)) + \calO(\eps)\right] = \Theta_\eps(1)\,,
\end{equation*}
and we get an analog error estimate as in Proposition \ref{prop:wkbmm2}: Under Assumptions \ref{ass:a_eps_wkb} and \ref{ass:coeff_a2}, and for $0<\eps\leq\eps_1$ it holds 
\begin{equation}\label{eqn:wkbmm2_errst_W3}
\|\hat{W}(x_n)- \hat{W}_n\|\leq C\tfrac{h^\gamma}{\eps} + C\eps^3 h^2 \,,\qquad 1\leq n \leq N\,.
\end{equation}
Here, $\hat{W}(x_n)$ is the exact solution to the IVP \eqref{ivp:tp_mod3_W} at $x_n$, and $\hat{W}_n$ is its numerical approximation obtained by the WKB-marching method. 

Using the numerical approximation $\hat{W}_n$ we shall denote the (numerical) approximation $\psi_h^{(\prime)}$ to the solution $\psi^{(\prime)}$ of the BVP \eqref{bvp:mod3} --extended to $[0,x_1]$-- as
\begin{equation}\label{eqn:psi_h3}
\psi_h(x):=
\begin{cases}\vspace{.4em}
\tilde{\alpha}\,\psi_{-}(x):=	\tilde{\alpha}\,\tfrac{1}{h(\mu)}\,U(\nu,z(x))\,,\quad &x\in[0,x_1]\;,\\
\tilde{\alpha}\,\hat\psi_{h,n}:= \tilde{\alpha}\,\hat{w}_n^1\,a(x_n)^{-\frac{1}{4}}\,,\quad &x\in\{x_1\ldots,x_N\}\;,
\end{cases}
\end{equation}
\begin{equation}\label{eqn:psiP_h3}
\eps\,\psi_h^\prime(x):=
\begin{cases}\vspace{.4em}
\tilde{\alpha}\,\psi_{-}^\prime(x): = -\tilde{\alpha}\,\tfrac{\sqrt{2\eps}}{h(\mu)}\,(-k_1)^\frac{1}{4}\,U^\prime(\nu,z(x))\,,\quad &x\in[0,x_1]\,,\\
\tilde{\alpha}\,\eps\,\hat\psi_{h,n}^\prime:=\tilde{\alpha}\,\left[a^{1/4}(x_n)\,\hat{w}_n^2 - \tfrac{\eps\,a^\prime(x_n)}{4\,a^{5/4}(x_n)}\,\hat{w}_n^1\right]\,,\quad\!\! &x\in\{x_1\ldots,x_N\}\,,
\end{cases}
\end{equation}
with the abbreviation $\tilde{\alpha}:=\tilde{\alpha}(\hat{W}_N)$.

With this notation we can now formulate the analog result to Theorem \ref{thm:errest_global} for quadratic potentials satisfying Assumption \ref{ass:coeff_a2}. The error orders are the same as in the case of a linear potential, since we considered a first order turning point at $x=0$ in both cases. 

\begin{theorem}[Convergence of the PCF-WKB method]\label{thm:errest_global_quad}
Let Assumptions \ref{ass:a_eps_wkb} and \ref{ass:coeff_a2} be satisfied and $0<\eps\leq\eps_1$. Then the pair $(\psi_h,\eps\psi_h^\prime)$ satisfies the same error estimates as in Theorem \ref{thm:errest_global}.
\begin{proof}
In this proof we are using the Lipschitz constant $L_{\tilde{\alpha}}$ and upper bound $C_{\tilde{\alpha}}$ for $\tilde{\alpha}$ from Lemma \ref{lem:alpha_lip_bnd} with $\delta := \min(C_9,C_{11})$ as lower bound for the arguments $\hat{W}(x_n)$ and $\hat{W}_n$, cf.\ \eqref{eqn:ul_unif_bnd_W_mod3} and \eqref{eqn:unif_bnd_Bn3}. 
\begin{enumerate}[label=\alph*)]
\item{For the error estimate on $\psi(x)$ with $x\in[0,x_1]$ we need the following estimate:
\begin{equation}\label{eqn:psi_min_bnd}
\begin{aligned}
|\psi_{-}&(x)| = \left|\tfrac{1}{h(\mu)}U(\nu,z(x))\right| = \left|\tfrac{g(\mu)}{h(\mu)}\left(2\sqrt{\pi}\varphi(t)\mu^\frac{1}{3}\Ai(\mu^\frac{4}{3}\zeta(t)) + \calO(\eps)\right)\right|\\
& \leq\tfrac{|f(\mu)|\,2^\frac{2}{3} \sqrt{\pi}\,k_2^\frac{1}{3}}{(-k_1)^\frac{1}{4}}\max_{t\in[1+\frac{2k_1}{k_2}x_1,1]}\left|\varphi(t)\Ai(\mu^\frac{4}{3}\zeta(t))\right|\eps^{-\frac{1}{6}} + \calO(\eps)\leq C_{\psi_-}\eps^{-\frac{1}{6}}\,,
\end{aligned}
\end{equation}
with an $\eps$--independent constant $C_{\psi_-}>0$. In the first line we used the asymptotic representation \eqref{eqn:asy_rep_pcf_tp} for $U(\nu,z(x))$, which is uniform in $x\in[0,x_1]$; the terms $\varphi(t)$ and $\zeta(t)$ are defined right after \eqref{eqn:asy_rep_pcf_tp}. In the last line we used that the $\eps$--dependent constant $f(\mu) = \Theta_\eps(1)$, and that the $\max$-term is $\eps$--uniformly bounded, see \eqref{eqn:max_airy_phi}. 
Using the estimate  \eqref{eqn:psi_min_bnd} yields 
\begin{align}\nonumber
|\psi(x)-\psi_h(x)| &= |\tilde{\alpha}(\hat{W}(1))-\tilde{\alpha}(\hat{W}_N)|\,|\psi_-(x)|\\\nonumber
&\leq L_{\tilde{\alpha}}\,\|\hat{W}(1)-\hat{W}_N\|\,C_{\psi_-}\eps^{-\frac{1}{6}}\\\nonumber
&\leq C\,\left(\tfrac{h^\gamma}{\eps}+\eps^3 h^2\right)\,\eps^{-\frac{1}{6}}= C\,\left(\tfrac{h^\gamma}{\eps^{7/6}}+\eps^{\frac{17}{6}} h^2\right)\,,
\end{align}
where we used \eqref{eqn:wkbmm2_errst_W3} in addition to the Lipschitz continuity of $\tilde{\alpha}$. 

For the $\eps$--scaled derivative $\eps\psi_{-}^\prime(x)$ in \eqref{eqn:psiP_h3} we have the asymptotic expansion \eqref{eqn:full_asy_pcf}. The term $\mu^{-\frac{4}{3}}\Ai(\mu^\frac{4}{3}\zeta(t))=\calO(\eps^\frac{4}{6})$, when including the turning point at $t=1$ (since $\zeta(1)=0$), and for all other $t\in[1+\frac{2k_1}{k_2}x_1,1)$ we have $\Ai(\mu^\frac{4}{3}\zeta(t)) = \calO(\eps^\frac{1}{6})$. Truncating the asymptotic expansion \eqref{eqn:full_asy_pcf} after the lowest order term in $\eps$ (which pertains to $D_0(\zeta)=1$)  shows that there exists an $\eps$--independent $C_{\psi_-^\prime}>0$, such that
\begin{equation*}
|\eps\psi_{-}^\prime(x)| = \bigg|-\frac{f(\mu)\,2^\frac{1}{3}\sqrt{\pi}k_2^\frac{2}{3}}{(-k_1)^\frac{1}{4}}\frac{{\Ai}^\prime(\mu^\frac{4}{3}\zeta(t))\eps^\frac{1}{6}}{\varphi(t)}+\calO(\eps^\frac{5}{6})\bigg|\leq C_{\psi_-^\prime}\,.
\end{equation*}
Here we used the fact that $|\frac{{\Ai}^\prime(\mu^{4/3}\zeta(t))\eps^{1/6}}{\varphi(t)}|$ for $t\in[1+\frac{2k_1}{k_2}x_1,1]$ is $\eps$--uniformly bounded above, see \eqref{eqn:max_ariyPri_phi}. Hence,
\begin{align}\nonumber
\eps|\psi^\prime(x)-\psi_h^\prime(x)| &= \eps|\tilde{\alpha}(\hat{W}(1))-\tilde{\alpha}(\hat{W}_N)|\,|\psi_-^\prime(x)|\\\nonumber
&\leq L_{\tilde{\alpha}}\,\|\hat{W}(1)-\hat{W}_N\|\,C_{\psi_-^\prime}\\\nonumber
&\leq C\,\left(\tfrac{h^\gamma}{\eps}+\eps^3 h^2\right)\,.
\end{align}
}
\end{enumerate}
The parts b) and c) are identical to the proof of the respective parts in Theorem \ref{thm:errest_global}.
\end{proof}
\end{theorem}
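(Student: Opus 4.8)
The plan is to transfer the proof of Theorem~\ref{thm:errest_global} essentially verbatim, since the entire architecture---reformulate the BVP as the IVP \eqref{ivp:tp_mod3_W}, solve it numerically by the WKB-marching method, and recover the BVP solution through the $\tilde\alpha$--scaling---is identical to the linear case; only the explicit boundary function on $[0,x_1]$ changes from a scaled Airy function to the scaled parabolic cylinder function $\tfrac{1}{h(\mu)}U(\nu,z(x))$. All three structural ingredients are already at hand: the two-sided $\eps$--uniform bounds on $\hat W(x_n)$ and on $\hat W_n$ from Lemma~\ref{lem:unif_bnd_W_mod3} and \eqref{eqn:unif_bnd_Bn3}, the Lipschitz continuity and boundedness of $\tilde\alpha$ from Lemma~\ref{lem:alpha_lip_bnd} (applied with $\delta:=\min(C_9,C_{11})$, so that both arguments $\hat W(x_n)$ and $\hat W_n$ stay outside $B_\delta(0)$), and the PCF-based WKB error bound \eqref{eqn:wkbmm2_errst_W3}. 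Hence the only genuinely new estimates concern part~a) on $[0,x_1]$.

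For part~a) I would first isolate the two pointwise bounds that carry the turning-point behaviour, namely
\begin{equation*}
|\psi_-(x)|\le C_{\psi_-}\,\eps^{-\frac16}\,,\qquad \eps\,|\psi_-'(x)|\le C_{\psi_-'}\,,\qquad x\in[0,x_1]\,,
\end{equation*}
with $\eps$--independent constants. To obtain them I would use a uniform asymptotic expansion of $U(\nu,z(x))$ that remains valid \emph{up to and including} the turning point $x=0$ (equivalently $t=1$), i.e.\ the Airy-type representation in which $U$ is written through $\Ai(\mu^{4/3}\zeta(t))$ and its derivative; away from $x=0$ this reduces to Proposition~\ref{prop:asy_pcf}. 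Since $f(\mu):=g(\mu)/h(\mu)=\Theta_\eps(1)$, the scaling by $1/h(\mu)$ leaves a prefactor $\eps^{-1/6}$, while the factor $\max_t|\varphi(t)\Ai(\mu^{4/3}\zeta(t))|$ is $\eps$--uniformly bounded; this yields the first bound. For the derivative, the leading Airy term carries $\Ai'(\mu^{4/3}\zeta(t))\,\eps^{1/6}/\varphi(t)$, which is likewise $\eps$--uniformly bounded, giving the second bound. With these in place, $\psi=\tilde\alpha(\hat W(1))\psi_-$ and $\psi_h=\tilde\alpha(\hat W_N)\psi_-$ differ only through the scaling, so
\begin{equation*}
|\psi(x)-\psi_h(x)|\le L_{\tilde\alpha}\,\|\hat W(1)-\hat W_N\|\,|\psi_-(x)|\le C\Big(\tfrac{h^\gamma}{\eps}+\eps^3h^2\Big)\,\eps^{-\frac16}
\end{equation*}
by \eqref{eqn:wkbmm2_errst_W3}, which is exactly $C\,h^\gamma/\eps^{7/6}+C\,\eps^{17/6}h^2$; the derivative estimate follows the same way but with the bounded factor $\eps|\psi_-'|$, giving $C\,h^\gamma/\eps+C\,\eps^3h^2$.

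Parts~b) and c) I would simply import from Theorem~\ref{thm:errest_global}. On $[x_1,1]$ I work in the vector variable $W=\tilde\alpha\hat W$, split
\begin{equation*}
\|W(x_n)-W_n\|\le|\tilde\alpha(\hat W(1))-\tilde\alpha(\hat W_N)|\,\|\hat W(x_n)\|+|\tilde\alpha(\hat W_N)|\,\|\hat W(x_n)-\hat W_n\|\,,
\end{equation*}
bound the first summand by $L_{\tilde\alpha}\|\hat W(1)-\hat W_N\|\,C_{10}$ and the second by $C_{\tilde\alpha}\|\hat W(x_n)-\hat W_n\|$, and estimate both differences by \eqref{eqn:wkbmm2_errst_W3}; the $\eps$--uniform equivalence of $\|W\|$ and $\|(\psi,\eps\psi')\|$ then transfers the bound to $(\psi,\eps\psi')$. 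Part~c) is then the maximum of the two regional estimates.

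The main obstacle is establishing the blow-up bound $|\psi_-(x)|\le C\eps^{-1/6}$ (and the companion uniform bound on $\eps|\psi_-'|$) \emph{uniformly up to the turning point}: away from $x=0$ the non-uniform asymptotics of Proposition~\ref{prop:asy_pcf} would suffice, but at $x=0$ the variable $z(x)$ and the order $\nu$ in \eqref{eqn:nu_zx} both scale like $\eps^{-1/2}$, so one must invoke the turning-point-uniform (Airy-type) asymptotics of the parabolic cylinder function and track the $\eps$--powers carefully---in particular the fact that the $1/h(\mu)$ scaling converts the $\eps^{1/6}$ magnitude of $U$ at an interior point into the $\eps^{-1/6}$ value at $x=0$. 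Once this single bound is secured, the remainder is a mechanical copy of the linear-potential argument.
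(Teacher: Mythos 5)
Your proposal follows the paper's own proof essentially verbatim: the same use of Lemma \ref{lem:alpha_lip_bnd} with $\delta=\min(C_9,C_{11})$, the same two key pointwise bounds $|\psi_-|\leq C\eps^{-1/6}$ and $\eps|\psi_-'|\leq C$ obtained from the turning-point-uniform Airy-type expansion \eqref{eqn:full_asy_pcf} of the parabolic cylinder function together with $f(\mu)=\Theta_\eps(1)$ and the bounds \eqref{eqn:max_airy_phi}, \eqref{eqn:max_ariyPri_phi}, and the same transfer of parts b) and c) from Theorem \ref{thm:errest_global}. No gaps; the argument is correct.
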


The $\tfrac{h^\gamma}{\eps^{7/6}}$ term from the numerical integration of the phase $\phi(x)$ from \eqref{eqn:phase_beta} appears again in the error estimates of Theorem \ref{thm:errest_global_quad}. As mentioned in Section \ref{subsec:errest_tp}, this term drops out for some applications, when the phase is explicitly integrable.


\subsection{Numerical results}\label{subsec:num_res_pcf}
The illustration of the error estimates of Theorem \ref{thm:errest_global_quad} for the PCF-WKB method is done analogously to Subsection \ref{subsec:num_res}. Here the coefficient function of \eqref{eqn:1d_schroed} is chosen as $a(x)= x-\frac{x^2}{2}$ on $[0,1]$. Therefore the solution $\psi$ is explicitly known as a parabolic cylinder function. The calculations are done in MATLAB\textsuperscript{\textregistered}, where the PCF is not implemented, but can be obtained via relations to other functions, i.e.\ the confluent hypergeometric function \cite[\S 12.7(iv)]{NHM10}. As the range of $\eps$ we chose $\eps = 2^{-5},\ldots,2^{-9}$, since for $\eps=2^{-10}$ the evaluation of the PCF returns \texttt{Inf} as values. This is because its order $\nu$ becomes large (negative) for small $\eps$, and thus, the PCF maps to very large values. For $\eps= 2^{-9}$ the evaluation already gets very inaccurate, such that we used \textit{Mathematica}\textsuperscript{\textregistered} for this case -- to evaluate the PCF for the reference solution. 

\todo{
For the plots in Fig.\ \ref{fig:pcf-wkb-err}, we computed the integral for the phase $\phi(x)$ numerically, once for the plot on the right with an error tolerance of $10^{-12}$ (such that the corresponding error term is negligible in comparison to the second error term in \eqref{eqn:errest_01}), and once using the composite Simpson rule for the phase in the plot on the left. We clearly see the $h^2$ convergence rate of the method for each $\eps$. An interesting difference can be seen in the left plot for small $\eps$ and large $h$: Here the $\frac{h^4}{\eps^{7/6}}$--term, originating from the numerical integration of the phase, is dominant.}

\todo{
In the region with quadratic convergence (i.e.\ when the second term in \eqref{eqn:errest_01} is dominant),} the error in $\eps$ is decreasing with order of about $\eps^{3}$ to $\eps^{3.7}$, and thus it is again (cf.\ Subsection \ref{subsec:num_res}) slightly superior to the predicted estimates of order $\eps^\frac{17}{6}$. 


\begin{figure}[!ht]
	\includegraphics[trim=.8cm .5cm 1.2cm 1cm,scale=0.42]{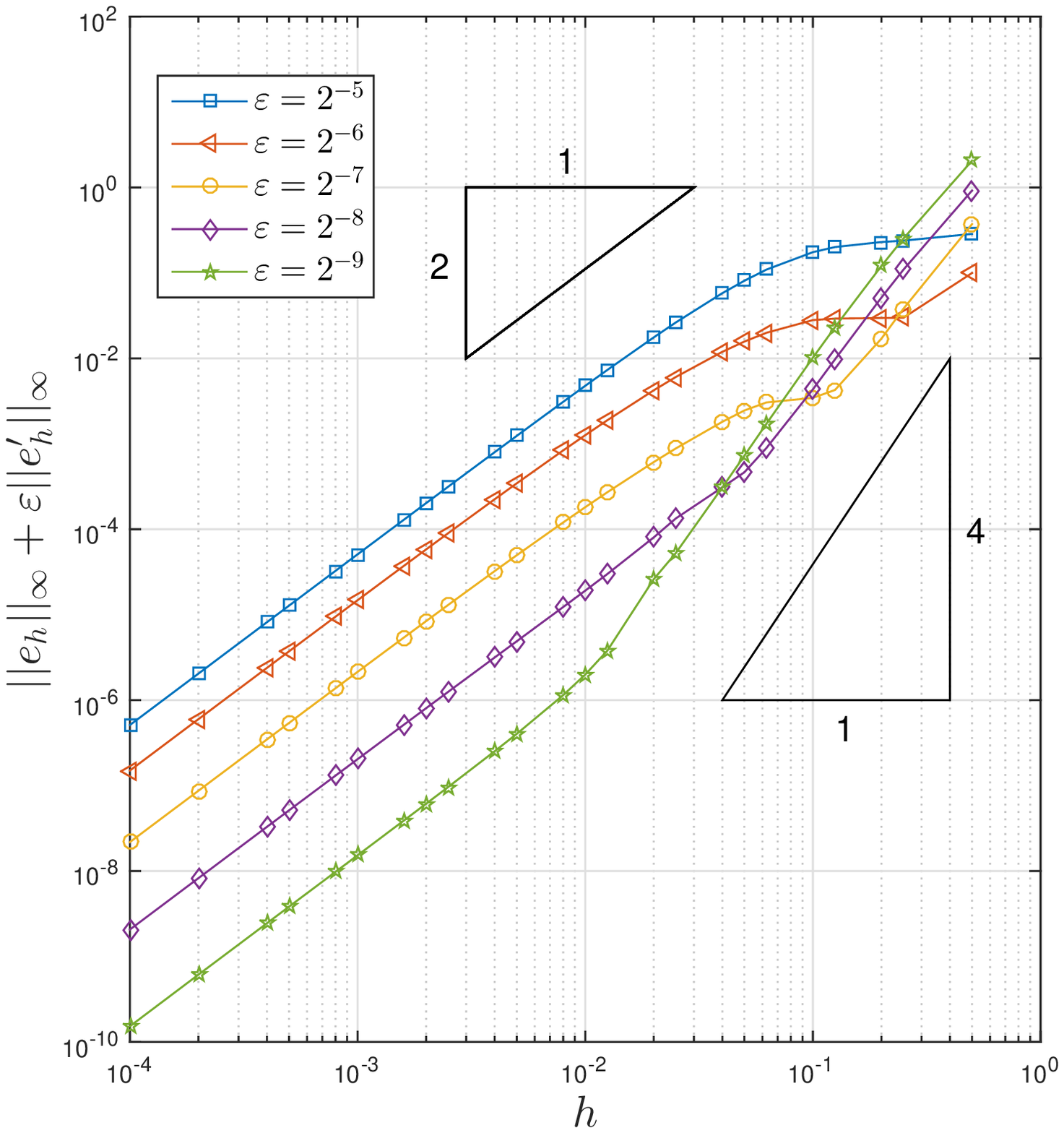}
	\hfil
	\includegraphics[trim=.8cm .5cm 1.2cm 1cm,scale=0.42]{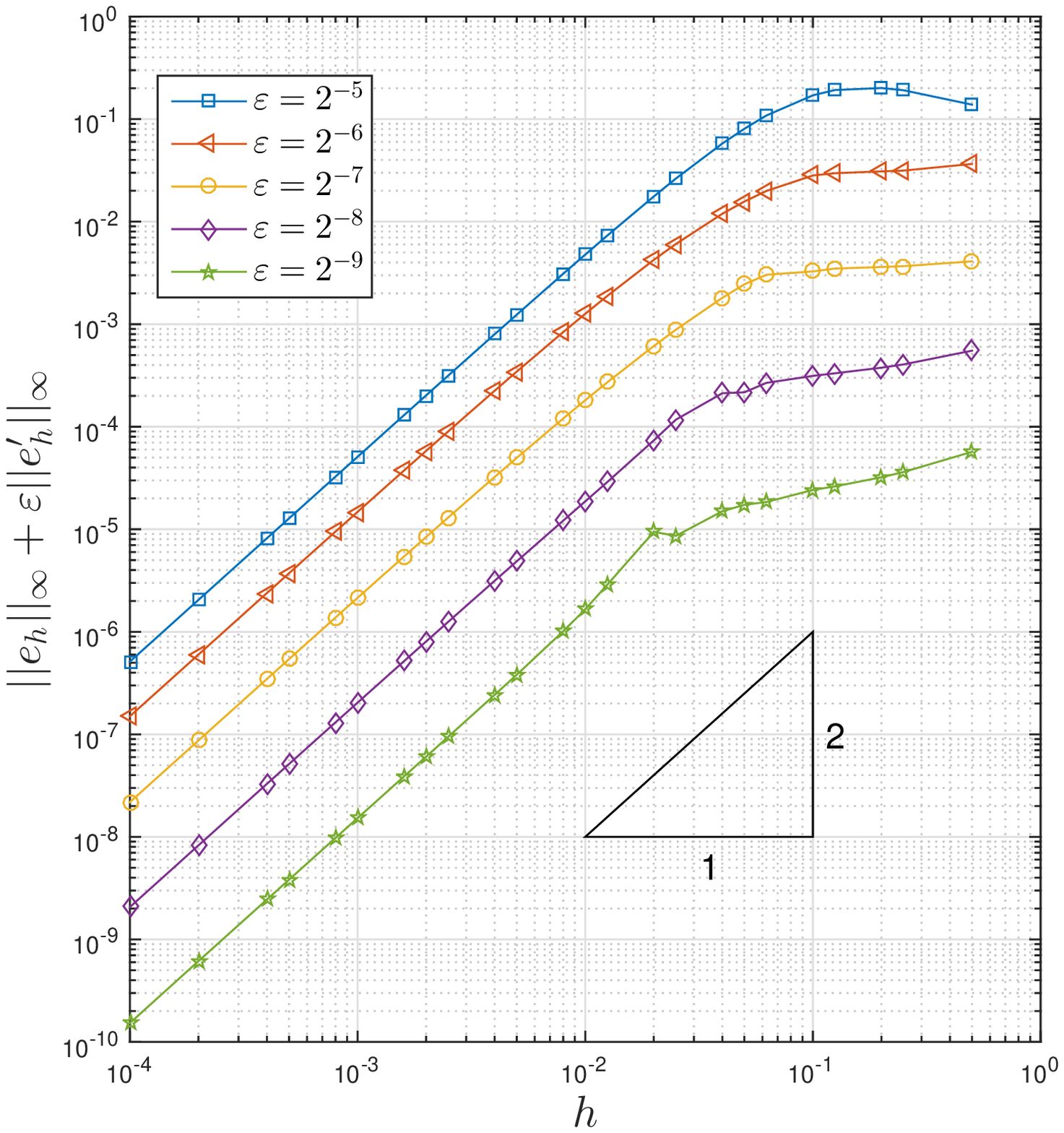}
	\caption{Absolute error on a log-log scale for the PCF-WKB method on $[0,1]$ with $x_1=0.1$ and the quadratic potential $a(x)=x-\frac{x^2}{2}$ for several values of $\eps$. $h$ is the step size for the WKB-marching method. \todo{On the left are the results with the phase $\phi$ computed numerically via the composite Simpson rule, and on the right the results using a numerically computed phase with high precission (error tolerance of $10^{-12}$).}}
	\label{fig:pcf-wkb-err}
\end{figure}


\section{Generalization and outlook}\label{sec7}

The next natural question is how to alleviate the restriction that the potential $V(x)$ should be (exactly) linear or quadratic in a neighborhood of the turning point. 
An obvious and frequently used strategy is to approximate the potential: In \cite{HH08} a piecewise constant approximation was used (actually in a regime without turning points), and in \cite[\S15.5]{Hal13}, \cite[\S7.3.1]{Nay73}, \cite[\S4.3]{Hol95} a linear approximation at the turning point was employed. But, as we shall illustrate next, such linear (or even higher order) approximations lead to errors that are unbounded as $\eps\to0$. Hence, they cannot serve as a starting point to construct uniformly accurate schemes. 
%
The error encountered by taking Airy functions (as solutions to the reduced problem --  i.e.\ the Airy equation) can only be uniformly bounded when confining to a region around the turning point that shrinks fast enough as $\eps\to 0$ (particularly like $o(\eps^{2/5})$, \cite{Mil06}). 
But, for the time being, the WKB-marching method requires a constant-in-$x$ transition point $x_1$.


\begin{figure}[!ht]
	\includegraphics[trim=3cm 21.5cm 5cm 2.3cm]{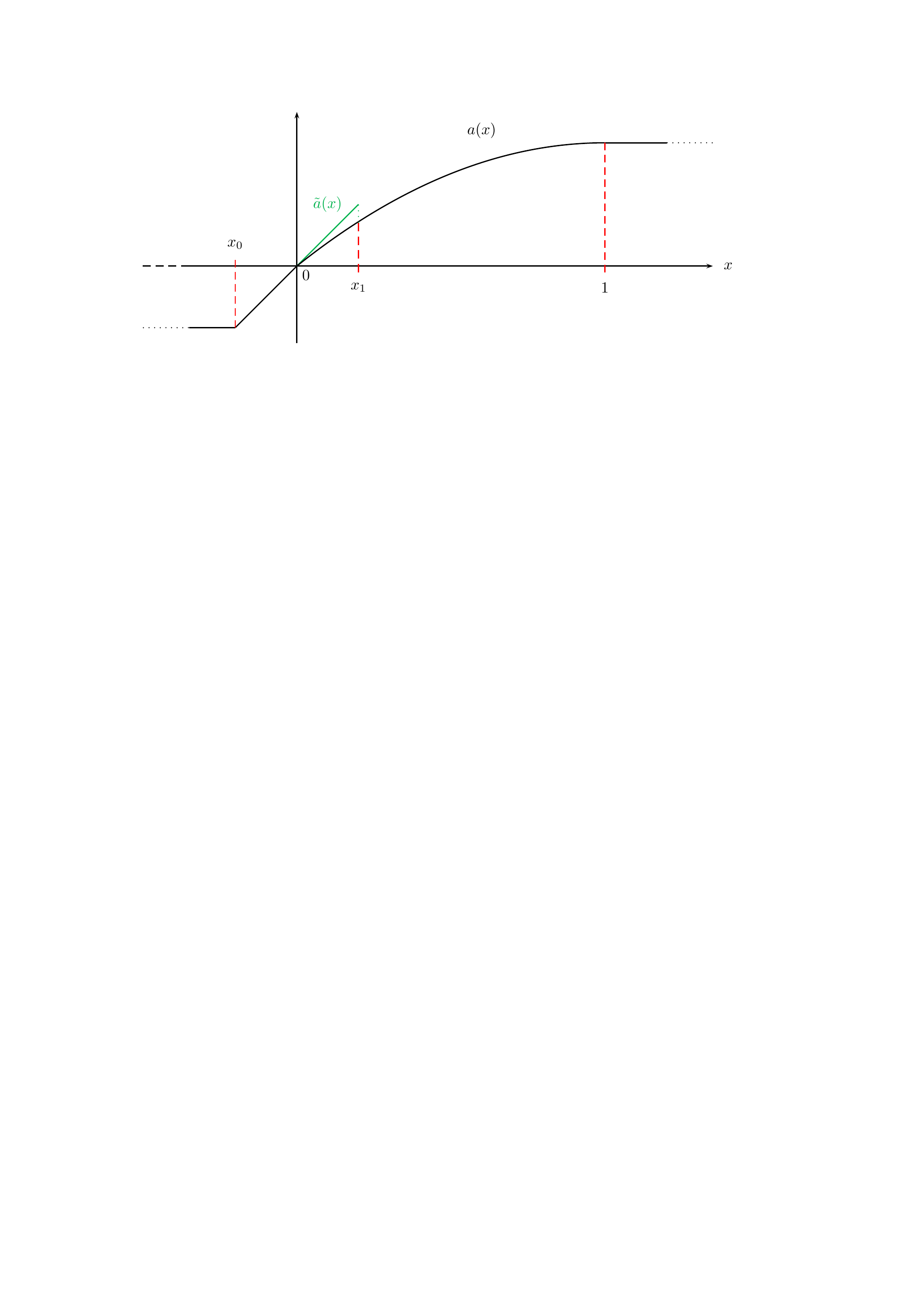}
	\caption{The coefficient function $a(x)$ is linear ($a(x)=x$) on $[x_0,0]$ and quadratic ($a(x)=x-\frac{x^2}{2}$) on $[0,1]$  with a turning point at $x=0$. The green line $\tilde a(x)$ represents a linear approximation on $[0,x_1]$ that is tangential at the turning point.}
	\label{fig:pot_approx2}
\end{figure}

\begin{figure}[!ht]
	\includegraphics[scale=0.75]{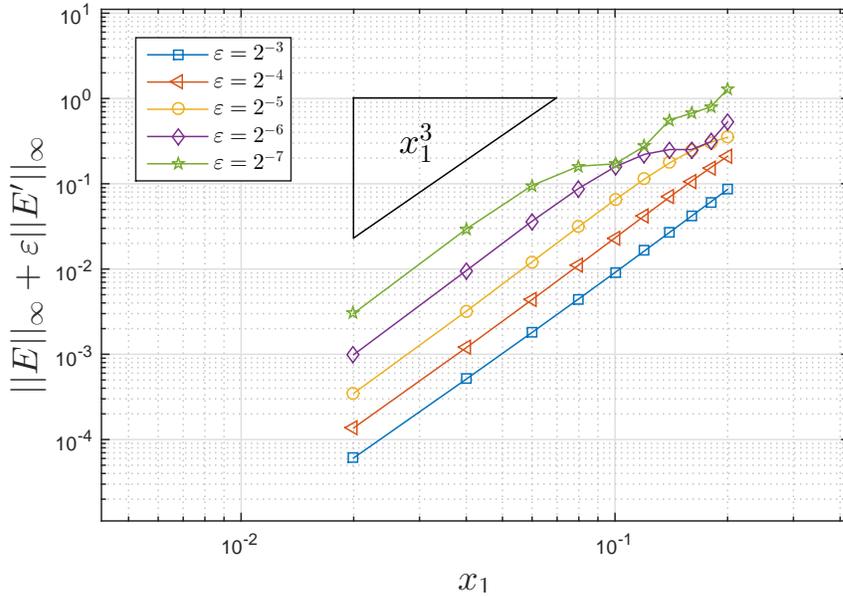}
	\caption{Error of the scattering solution due to the linear approximation of $a(x)$ by $\tilde a(x)$, as shown in Fig.\ \ref{fig:pot_approx2}. The plot shows the error in dependence of $\eps$ and the cut-off point $x_1$; apparently it is of the order $x_1^3/\eps^{1.5}$. The function $E(x)$ is the difference of the two solutions.}
	\label{fig:lin_approx_TP}
\end{figure}

Next we illustrate the consequence of a linear approximation of the potential close to the turning point: We chose a piecewise quadratic coefficient function $a(x)$, see Fig.\ \ref{fig:pot_approx2}, along with its linear approximation $\tilde a$ on $[0,x_1]$ with some (fixed) $x_1\in(0,1)$. 

Since the solutions of the scattering problem for both potentials 
are analytically known in terms of Airy functions and parabolic cylinder functions, the error plotted in Fig.\ \ref{fig:lin_approx_TP} is \emph{not\/} due to any numerical method. The non-uniform error (in $\eps$) stems only from the modified coefficient function, and hence modified phase of the solution: 
For $\eps$ small, the approximate solution is entirely out of phase close to $x=x_1$. 
On the one hand the error in Fig. \ref{fig:lin_approx_TP} is of order $\calO(x_1^3)$ for fixed $\eps$, which stems from the linear approximation of the potential. On the other hand, for fixed $x_1$ the error grows approximately like $\eps^{-3/2}$. 

\medskip

As a conclusion of this feasibility study and as an outlook for a follow-up work there are two options to proceed. Since a linear approximation of $a(x)$ leads to $\eps$--uniform errors for transition points $x_1=o(\eps^{2/5})$ (see \cite{Mil06}), a first option would be to extend the WKB-marching method to $\eps$--dependent intervals of the form $[\eps^\alpha,1]$, with some $0<\alpha<1$. Since the WKB-method yields an $\calO(\eps^3)$--error (for analytically integrable phase functions, see Proposition \ref{prop:wkbmm2}) on $\eps$--independent intervals, an extension to some $[\eps^\alpha,1]$ should yield $\eps$--uniform errors -- hopefully with $\alpha>\frac25$, to have an overlap between both regimes.

A second option is to use Langer functions \cite{Lan31} instead of Airy functions on a fixed interval $[0,x_1]$, coupled to the WKB-marching method. Langer functions are essentially Airy functions with a (generally) non-linear transformation of the argument and a (linear) modification of the amplitude, cf.\ \emph{modified Airy functions\/} in \cite{SBW08}.
This procedure is in strong contrast to the approach by linear approximation of the potential: Both approaches correspond to a transformation to a perturbed Airy equation. The error encountered by using the solution to the reduced problem is non-uniform in $\eps$ for the approach of linear approximation of the potential, but uniform on a fixed interval $[0,x_1]$ for the transformation proposed by Langer, cf.\ \cite[\S 7.2.5]{Mil06}. 
The concept of Langer functions extends also to higher order turning points.


\appendix
\section{\!\!\!}\label{sec:app}
In the first two subsections we shall review known but technical facts that are needed within this article.


\subsection{Asymptotic expansions}\label{app:asy_exp}
First we clarify the notions of \emph{asymptotic expansions\/} and \emph{asymptotic representations\/}, cf.\ \cite{NHM10,Erd56}. 
\begin{definition}[Asymptotic sequences]\label{def_asy_seq}
Let $D$ be a set and $\{\chi_n(x)\}_{n\in \N}$ be a sequence of functions on $D$, such that for $x_0\in D$
\begin{equation}\label{eqn:asy_seq}
\chi_{n+1}(x) =  o(\chi_n(x)), \qquad x\to x_0, \qquad \forall n \in\N.
\end{equation}
Then $\{\chi_n(x)\}_{n\in \N}$ is called an \emph{asymptotic sequence\/} as $x\to x_0$.\footnote{The `little-O' notation $f = o(g)$ as $x\to x_0$ is equivalent to $\frac{f(x)}{g(x)}\to 0$ as $x\to x_0$.}
\end{definition}

\begin{definition}[Asymptotic expansion]\label{def_asy_expan}
Let $\{\chi_n(x)\}_{n\in \N}$ be an asymptotic sequence on $D$ and $x_0\in D$. The (formal) series $\sum_{n=0}^\infty c_n \chi_n(x)$ is called \emph{asymptotic expansion\/} for a function $f(x)$ for $x\to x_0$, shortly denoted as
\begin{equation}\label{eqn_asy_expan}
f(x) \sim \sum_{n=0}^\infty c_n \chi_n(x), \qquad x\to x_0\,,
\end{equation}
if for all $N\in\N\cup\{0\}$,
\begin{equation}\label{eqn_asy_rep}
f(x) = \sum_{n=0}^{N} c_n \chi_n(x) + \calO(\chi_{N+1}(x)), \qquad x\to x_0\,.
\end{equation}
The finite sum on the right hand side is called an \emph{asymptotic representation\/} for $f(x)$ of order $N$. An asymptotic expansion may converge or diverge as $N\to\infty$. 
\end{definition}

For real valued negative arguments, the asymptotic expansions for the Airy function and its derivative are
\begin{equation}\label{eqn:full_asy_airy}
\begin{aligned}
\Ai(-z) &\sim \frac{1}{\sqrt{\pi}z^\frac{1}{4}}\left(\cos\left(\xi-\tfrac{\pi}{4}\right)\sum_{k=0}^\infty (-1)^k\frac{u_{2k}}{\xi^{2k}} + \sin\left(\xi-\tfrac{\pi}{4}\right)\sum_{k=0}^\infty(-1)^k\frac{u_{2k+1}}{\xi^{2k+1}}\right)\,,\\[.5em]
{\Ai}^\prime(-z) &\sim \frac{z^\frac{1}{4}}{\sqrt{\pi}}\left(\sin\left(\xi-\tfrac{\pi}{4}\right)\sum_{k=0}^\infty (-1)^k\frac{v_{2k}}{\xi^{2k}} - \cos\left(\xi-\tfrac{\pi}{4}\right)\sum_{k=0}^\infty(-1)^k\frac{v_{2k+1}}{\xi^{2k+1}}\right)\,,
\end{aligned}
\end{equation}
as $z\to\infty$ where $\xi := \frac{2}{3}z^{3/2}$, cf.\ \cite[\S 9.7(ii)]{NHM10}. Note that these are given in terms of two asymptotic expansions. They are to be interpreted separately in the sense of Definition \ref{eqn_asy_expan}.
The constant coefficients $u_k$ and $v_k$ are defined in \cite[\S 9.7(i)]{NHM10}; here we only need $u_0 = v_0 = 1$, $u_1 = \frac{5}{72}$ and $v_1 = -\frac{7}{72}$.


\subsection{Asymptotic expansions for the PCF including one turning point}\label{app:as_exp_pcf}

The asymptotic expansions for parabolic cylinder functions, i.e.\ solutions to the equation in \eqref{bvp:mod3} with quadratic potential, are given in the literature for a specific form of the differential equation. We will first transform the equation in the following manner:
\begin{align*}
x &\mapsto t = 1 + \frac{2k_1}{k_2}\,x\;,\\
\eps &\mapsto \mu = \frac{k_2}{2\sqrt{\eps}(-k_1^3)^{1/4}}\;,
\end{align*}
which yields
\begin{equation}\label{eqn:t_mu}
\psi_{tt} + \mu^4(1-t^2)\psi =0\;,
\end{equation}
with turning points at $t=\pm 1$. In this form the turning point at $x=0$ corresponds to $t=1$ and the second turning point (originally at $x = -\frac{k_2}{k_1}$) corresponds to $t=-1$. Clearly we have $z(x) = \mu t \sqrt{2}$ and $\nu = -\frac{1}{2}\mu^2$, for $z(x)$ and $\nu$ defined in \eqref{eqn:nu_zx}. Then the solution to the IVP \eqref{ivp:mod3} and its derivative are
\begin{align}\nonumber
\hat\psi(x) &= c_1\,U\left(-\tfrac{1}{2}\mu^2,\mu t\sqrt{2}\right)\,,\\\nonumber
\eps\hat{\psi}^\prime(x) &= -c_1\sqrt{2\eps}(-k_1)^\frac{1}{4}U^\prime\left(-\tfrac{1}{2}\mu^2,\mu t\sqrt{2}\right)\,.
\end{align}

The following asymptotic expansions for the parabolic cylinder function (taken from \cite[\S 12.10(vii)]{NHM10}) hold uniformly in $x(t)\in[0,-\frac{k_2}{k_1}-\delta]$ with $\delta>0$, as $\mu\to\infty$ (or equivalently $\eps\to 0$):
\begin{equation}\label{eqn:full_asy_pcf}
\begin{aligned}
U\left(-\tfrac{1}{2}\mu^2,\mu t\sqrt{2}\right) &\sim 2\,\sqrt{\pi}\mu^\frac{1}{3}g(\mu)\varphi(t)\left(\Ai(\mu^\frac{4}{3}\zeta)\sum_{s=0}^\infty \tfrac{A_s(\zeta)}{\mu^{4s}} + \tfrac{{\Ai}^\prime(\mu^\frac{4}{3}\zeta)}{\mu^\frac{8}{3}}\sum_{s=0}^\infty\tfrac{B_s(\zeta)}{\mu^{4s}}\right)\,,\\[.5em]
U^\prime\left(-\tfrac{1}{2}\mu^2,\mu t\sqrt{2}\right) &\sim \sqrt{2\pi}\mu^\frac{2}{3} \frac{g(\mu)}{\varphi(t)}\left(\tfrac{\Ai(\mu^\frac{4}{3}\zeta)}{\mu^\frac{4}{3}}\sum_{s=0}^\infty \tfrac{C_s(\zeta)}{\mu^{4s}} + {\Ai}^\prime(\mu^\frac{4}{3}\zeta)\sum_{s=0}^\infty\tfrac{D_s(\zeta)}{\mu^{4s}}\right)\,,
\end{aligned}
\end{equation}
where $\zeta = \zeta(t) := -\left(\frac{3}{4}\arccos(t)-\frac{3t}{4}\sqrt{1-t^2}\right)^\frac{2}{3}\leq 0$, which is real valued on the $t$--interval $[-1+\tilde{\delta},1]$ with some $\tilde{\delta}>0$. The function $g(\mu)$ has the following asymptotic expansion w.r.t.\ $\mu\to\infty$:
\begin{equation}\label{eqn:gmu}
g(\mu)\sim h(\mu)\left(1+\frac{1}{2}\sum_{s=1}^{\infty}\frac{\gamma_{s}}{(%
\frac{1}{2}\mu^{2})^{s}}\right)\,,
\end{equation}
where 
\begin{equation}\label{eqn:hmu}
h(\mu):=2^{-\frac{1}{4}\mu^{2}-\frac{1}{4}}e^{-\frac{1}{4}\mu^{2}}\mu^{\frac{1}
{2}\mu^{2}-\frac{1}{2}}\,,
\end{equation}
and the constant coefficients $\gamma_s$ are defined as in \cite[12.10.16]{NHM10} but not further used here. Moreover $\varphi(t)$ is defined as
\begin{equation}\label{eqn:phi}
\varphi(t):=\left(\frac{-\zeta(t)}{1-t^{2}}\right)^{\frac{1}{4}}\;.
\end{equation}
The explicit definitions of $A_s(\zeta)$, $B_s(\zeta)$, $C_s(\zeta)$ and $D_s(\zeta)$ are not needed here and can be found at \cite[12.10.42(44)]{NHM10}. We just note that they are independent of $\eps$ and that 
\begin{equation}\label{eqn:A0D0}
A_0(\zeta)=1\,;\quad D_0(\zeta)=1\,. 
\end{equation}

These asymptotic expansions are in terms of the Airy function and their validity region includes one ($x=0 \Leftrightarrow t=1$) of the two turning points for the quadratic coefficient function. On top of that they hold uniformly for all \mbox{$t\in[-1+\tilde{\delta},1]$}, i.e.\ bounded away from the second turning point at $t=-1$ by some arbitrarily small $\tilde{\delta}>0$. Within the region $[-1+\tilde{\delta},1]$ the coefficient function $1-t^2$ of the transformed equation is non-negative. In order to keep the notation clear, we shall mostly stick to the notation from the literature, i.e.\ using $\mu$ and $\zeta(t)$. 

We also need some properties for the function $\varphi(t)$:
\begin{lemma}\label{lem:phi}
For $t\in[-1+\tilde{\delta},1]$ with any $\tilde{\delta}\in(0,2]$, the function 
\begin{equation*}
\varphi(t) := \left(\frac{-\zeta(t)}{1-t^2}\right)^\frac{1}{4}\,,
\end{equation*}
where $\zeta(t)=-\left(\frac{3}{4}\arccos(t)-\frac{3t}{4}\sqrt{1-t^2}\right)^\frac{2}{3}$, is well-defined, real valued, positive and bounded. In particular it satisfies $\varphi(t)\geq 2^{-\frac{1}{6}}>0$. 
\end{lemma}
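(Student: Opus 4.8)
The plan is to reduce all four assertions --- well-definedness, real-valuedness, positivity (with the sharp lower bound) and boundedness --- to elementary monotonicity properties of the single auxiliary function
\begin{equation*}
G(t) := \arccos(t) - t\sqrt{1-t^2}\,,\qquad t\in[-1,1]\,,
\end{equation*}
in terms of which $\zeta(t) = -\bigl(\tfrac{3}{4}G(t)\bigr)^{2/3}$, and hence $-\zeta(t) = \bigl(\tfrac{3}{4}G(t)\bigr)^{2/3}$. First I would show $G\geq 0$ on $[-1,1]$: a direct computation collapses the derivative to the sign-definite form $G'(t) = -2\sqrt{1-t^2}\leq 0$, so $G$ is (strictly) decreasing, and since $G(1)=0$ this yields $G(t)\geq 0$, with $G(t)>0$ for $t<1$. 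Consequently $-\zeta(t)\geq 0$; as $1-t^2>0$ on $(-1,1)$, the radicand $\tfrac{-\zeta(t)}{1-t^2}$ is non-negative there, so $\varphi$ is well-defined, real-valued and strictly positive on $(-1,1)$, and in particular on $(-1+\tilde\delta,1)$.

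Next I would resolve the removable singularity at the turning point $t=1$, where numerator and denominator of the radicand both vanish. Substituting $t=\cos\theta$ with $\theta = \arccos(t)\to 0^+$ gives $1-t^2 = \sin^2\theta$ and $G = \theta - \tfrac{1}{2}\sin(2\theta)$, whose Taylor expansion yields the matching leading orders $G(t) = \tfrac{2}{3}\theta^3 + \calO(\theta^5)$ and $1-t^2 = \theta^2 + \calO(\theta^4)$. Hence $-\zeta(t) = \bigl(\tfrac{3}{4}G\bigr)^{2/3} = 2^{-2/3}\theta^2 + \calO(\theta^4)$, so the radicand tends to $2^{-2/3}$ and $\varphi(t)\to 2^{-1/6}$ as $t\to 1$. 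This shows that $\varphi$ extends continuously to $t=1$ with the value $2^{-1/6}$.

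The quantitative lower bound is the core of the statement, and I would obtain it from the pointwise inequality $G(t)\geq \tfrac{2}{3}(1-t^2)^{3/2}$ on $[-1,1]$. Setting $H(t) := G(t) - \tfrac{2}{3}(1-t^2)^{3/2}$, the derivative again reduces to a sign-definite expression, $H'(t) = -2(1-t)\sqrt{1-t^2}\leq 0$, and $H(1)=0$, so $H\geq 0$ on $[-1,1]$. Plugging this into $-\zeta = \bigl(\tfrac34 G\bigr)^{2/3}$ gives $-\zeta(t)\geq \bigl(\tfrac12(1-t^2)^{3/2}\bigr)^{2/3} = 2^{-2/3}(1-t^2)$; dividing by $1-t^2>0$ and taking fourth roots yields $\varphi(t)\geq 2^{-1/6}$ on $(-1,1)$, and by the continuous extension also at $t=1$. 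This simultaneously re-proves positivity and establishes the sharp bound $\varphi(t)\geq 2^{-1/6}>0$, attained at $t=1$. For boundedness from above I would simply invoke continuity: with the continuous extension at $t=1$ in place, $\varphi$ is continuous on the compact interval $[-1+\tilde\delta,1]$ and hence bounded; the only blow-up of the radicand occurs as $t\to -1$ (there $-\zeta\to(\tfrac{3\pi}{4})^{2/3}>0$ while $1-t^2\to 0$), and that point is excluded for every $\tilde\delta>0$.

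I expect the one genuinely delicate point to be the $0/0$ indeterminate form at $t=1$: one must extract the \emph{matching} leading orders $G\sim\tfrac23\theta^3$ and $1-t^2\sim\theta^2$ to see that their quotient has a finite, strictly positive limit (rather than blowing up or collapsing to zero). This is precisely what makes $\varphi$ continuous up to the turning point and what pins down its minimal value $2^{-1/6}$; everything else is a pair of routine monotonicity arguments resting on the two clean derivative identities for $G$ and $H$.
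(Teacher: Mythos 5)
Your proof is correct, and for the key lower bound it takes a genuinely different (and more self-contained) route than the paper. The paper's proof is a two-line sketch: it notes that well-definedness follows from $-\zeta(t)\ge 0$, and then \emph{asserts} that $\varphi$ is monotonically decreasing with $\lim_{t\to1^-}\varphi(t)=2^{-1/6}$, deducing the bound $\varphi\ge 2^{-1/6}$ from these two facts; the monotonicity of $\varphi$ itself is never verified (and would require differentiating the quotient $-\zeta/(1-t^2)$, which is not an obvious computation). You instead sidestep the monotonicity of $\varphi$ entirely by proving the pointwise inequality $G(t)\ge\tfrac{2}{3}(1-t^2)^{3/2}$ via the clean identity $H'(t)=-2(1-t)\sqrt{1-t^2}\le 0$ together with $H(1)=0$, which yields $-\zeta\ge 2^{-2/3}(1-t^2)$ and hence the bound directly; the Taylor expansion $G\sim\tfrac{2}{3}\theta^3$, $1-t^2\sim\theta^2$ at $t=\cos\theta\to1$ is then used only to resolve the $0/0$ form, establish continuity of $\varphi$ up to the turning point, and confirm that the bound $2^{-1/6}$ is attained. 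Both approaches rely on the same expansion at $t=1$; what yours buys is that every monotonicity claim is reduced to a sign-definite derivative ($G'=-2\sqrt{1-t^2}$ and $H'$ above), so the argument is fully checkable, at the cost of introducing the auxiliary function $H$. Your identification of the behaviour at $t=-1$ (radicand blows up, but that endpoint is excluded by $\tilde\delta>0$) is also correct and is implicit rather than spelled out in the paper.
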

\begin{proof}
For $t\in[-1+\tilde{\delta},1)$ it is clearly well-defined and real valued as the numerator is $-\zeta(t)\geq 0$. Moreover $\varphi$ is monotonically decreasing with $\lim_{t\to 1^-}\varphi(t) = 2^{-\frac{1}{6}}$. 
\end{proof}


\subsection{Proofs from Section \ref{subsec:asy_tp}} \label{app:prfs}
We start with the proof of Proposition \ref{prop:asy_pcf} for the asymptotic representations of the parabolic cylinder functions.
\begin{proof}[Proof of Proposition \ref{prop:asy_pcf}]\label{prf:asy_pcf}
The claim is stated for some $x\in(0,-\frac{k_2}{k_1})$ which is equivalent to $t\in(-1,1)$.
Consider the asymptotic expansions \eqref{eqn:full_asy_pcf} for the parabolic cylinder function $U(\nu,z(x))=U(-\frac{1}{2}\mu^2,\mu t\sqrt{2})$. The infinite sums, e.g., $\sum_{s=0}^\infty\tfrac{A_s(\zeta)}{\mu^{4s}}$, are asymptotic series with respect to the asymptotic sequence $\chi_s = \eps^{2s}$. 

With $K:=-\tfrac{k_2^{4/3}}{2^{4/3}\,k_1}>0$, the argument of the Airy function in \eqref{eqn:full_asy_pcf} is $\tfrac{K}{\eps^{2/3}}\zeta(t)=\mu^\frac{4}{3}\zeta(t)<0$ for $t\in(-1,1)$. Also, the asymptotic expansions from \eqref{eqn:full_asy_airy} for the Airy function and its derivative can be written as follows for $\eps\to 0$:
\begin{equation}
\begin{aligned}\label{eqn:asy_exp_airy}
\Ai\left(\mu^\frac{4}{3}\zeta(t)\right) &\sim \tfrac{\eps^\frac{1}{6}}{\sqrt{\pi}(-K\zeta(t))^\frac{1}{4}}\left(A_\eps(t)\sum_{k=0}^{\infty}a_k(t)\eps^{2k} + \eps\,B_\eps(t)\sum_{k=0}^{\infty}b_k(t)\eps^{2k}\right)\,,\\[0.5em]
{\Ai}^\prime\left(\mu^\frac{4}{3}\zeta(t)\right) &\sim \tfrac{(-K\zeta(t))^\frac{1}{4}}{\sqrt{\pi}} \eps^{-\frac{1}{6}}\left(B_\eps(t)\sum_{k=0}^\infty c_k(t)\eps^{2k} + \eps\,A_\eps(t)\sum_{k=0}^\infty d_k(t)\eps^{2k}\right)\,,
\end{aligned}
\end{equation}
where $A_\eps(t) := \cos(\eta(t)-\tfrac{\pi}{4})$, $B_\eps(t):= \sin(\eta(t)-\tfrac{\pi}{4})$ with $\eta(t):=\frac{2(-K\zeta(t))^\frac{3}{2}}{3\,\eps}$ and the first coefficients resp.\ read
\begin{equation}\label{eqn:pcf_coef}
a_0(t) = 1 \,,\quad b_0(t) = \tfrac{5}{48(-K\zeta(t))^{3/2}}\,,\quad c_0(t) = 1\,,\quad d_0(t) = \tfrac{7}{48(-K\zeta(t))^{3/2}}\,.
\end{equation}
Any further coefficients  will not be needed in the proceeding proofs.

While linear combinations of asymptotic expansions are again an asymptotic expansion, it is not generally guaranteed that multiplication will again result in an asymptotic expansion. But if two asymptotic expansions are  power series (so called Poincar\'e-type expansions), their product is again an asymptotic expansion (see \cite[Ch.1, \S 8.1(ii)]{Ol74}). 

The asymptotic expansions we are interested in are power series with respect to the asymptotic sequence $\eps^{2k}$. For some 
$\displaystyle f_\eps(t) \sim \sum_{k=0}^\infty a_k(t) \eps^{2k}$ and $\displaystyle g_\eps(t) \sim \sum_{s=0}^\infty e_s(t) \eps^{2s}$ it holds that
\begin{equation*}
(f_\eps\cdot g_\eps)(t)\sim \sum_{l=0}^\infty c_l(t)\eps^{2l}\;, \quad c_l(t)=\sum_{l=k+s}a_k(t)e_s(t) \,,
\end{equation*}
and thus,
\begin{equation}\label{eqn:asy_rep_prod}
(f_\eps\cdot g_\eps)(t) = a_0(t)e_0(t) + \calO(\eps^2)\,.
\end{equation}

We shall now apply this to the product 
\begin{equation*}
\Ai(\mu^\frac{4}{3}\zeta)\sum_{s=0}^\infty\frac{A_s(\zeta)}{\mu^{4s}}=\Ai(\mu^\frac{4}{3}\zeta)\sum_{s=0}^\infty \frac{A_s(\zeta)}{K^{3s}}\eps^{2s}\,,
\end{equation*}
from \eqref{eqn:full_asy_pcf}. Using $A_0(\zeta)=1$ from \eqref{eqn:A0D0}, and \eqref{eqn:asy_exp_airy} with $a_0(t)$, $b_0(t)$ from \eqref{eqn:pcf_coef}, gives the asymptotic representation
\begin{equation}\label{eqn:asy_prod_1}
\begin{aligned}
&\Ai(\mu^\frac{4}{3}\zeta)\cdot \sum_{s=0}^\infty\tfrac{A_s(\zeta)}{\mu^{4s}} =\\
&= \tfrac{\eps^\frac{1}{6}}{\sqrt{\pi}(-K\zeta)^\frac{1}{4}}\left[\cos(\eta-\tfrac{\pi}{4}) + \tfrac{5}{48(-K\zeta)^\frac{3}{2}}\sin(\eta-\tfrac{\pi}{4})\eps +\calO(\eps^2)\right]\,.
\end{aligned}
\end{equation}
Here $\zeta(t)$ is an $\eps$--independent function in $t$, but $\eta(t)$ is not only $t$--dependent but also of order $\calO(\eps^{-1})$ as $\eps\to 0$. As $\eta(t)$ appears only within $\sin$ and $\cos$, it does not affect the $\eps$--asymptotic behavior of the representation. 

In the same manner we get 
\begin{equation}\label{eqn:asy_prod_2}
\begin{aligned}
&\tfrac{{\Ai}^\prime(\mu^\frac{4}{3}\zeta)}{\mu^\frac{8}{3}}\cdot \sum_{s=0}^\infty\tfrac{B_s(\zeta)}{\mu^{4s}} =\\
&= \tfrac{(-\zeta)^\frac{1}{4}B_0(\zeta)}{\sqrt{\pi}\,K^\frac{7}{4}}\eps^\frac{7}{6}\left[\sin(\eta-\tfrac{\pi}{4}) + \tfrac{7}{48(-K\zeta)^\frac{3}{2}}\cos(\eta-\tfrac{\pi}{4})\eps +\calO(\eps^2)\right]\,.
\end{aligned}
\end{equation}
For the products in the asymptotic expansion of $U^\prime$ in \eqref{eqn:full_asy_pcf} we obtain similarly 
\begin{equation}\label{eqn:asy_prod_3}
\begin{aligned}
&\tfrac{\Ai(\mu^\frac{4}{3}\zeta)}{\mu^\frac{4}{3}}\cdot \sum_{s=0}^\infty\tfrac{C_s(\zeta)}{\mu^{4s}} =\\
&= \tfrac{C_0(\zeta)}{\sqrt{\pi}\,K^\frac{5}{4}(-\zeta)^\frac{1}{4}}\eps^\frac{5}{6}\left[\cos(\eta-\tfrac{\pi}{4}) + \tfrac{5}{48(-K\zeta)^\frac{3}{2}}\sin(\eta-\tfrac{\pi}{4})\eps +\calO(\eps^2)\right]\,;
\end{aligned}
\end{equation}
and
\begin{equation}\label{eqn:asy_prod_4}
\begin{aligned}
&{\Ai}^\prime(\mu^\frac{4}{3}\zeta)\cdot \sum_{s=0}^\infty\tfrac{D_s(\zeta)}{\mu^{4s}} =\\
&= \tfrac{(-K\zeta)^\frac{1}{4}}{\sqrt{\pi}}\eps^{-\frac{1}{6}}\left[\sin(\eta-\tfrac{\pi}{4}) + \tfrac{7}{48(-K\zeta)^\frac{3}{2}}\cos(\eta-\tfrac{\pi}{4})\eps +\calO(\eps^2)\right]\,,
\end{aligned}
\end{equation}
where $D_0(\zeta)=1$.

Putting the pieces together, using the formulas \eqref{eqn:full_asy_pcf} with the asymptotic representations \eqref{eqn:asy_prod_1} -- \eqref{eqn:asy_prod_4} yields
\begin{equation*}
\begin{aligned}
U(\nu,z(x))
&= 2\sqrt{\pi}K^\frac{1}{4}g(\mu)\varphi(t)\eps^{-\frac{1}{6}}\left[\tfrac{\eps^\frac{1}{6}}{\sqrt{\pi}(-K\zeta)^\frac{1}{4}}\cos(\eta-\tfrac{\pi}{4}) +\calO(\eps^\frac{7}{6})\right]\,,\\
& = g(\mu)\left[2\left(\tfrac{1}{1-t^2}\right)^\frac{1}{4}\cos\left(\eta - \tfrac{\pi}{4}\right) + \calO(\eps)\right]\,,
\end{aligned}
\end{equation*}
as well as
\begin{equation*}
\begin{aligned}
\eps\tfrac{d}{dx}U(\nu,z(x))
& = 4 K^\frac{5}{4}\sqrt{\pi}\tfrac{k_1}{k_2}\tfrac{g(\mu)}{\varphi(t)}\eps^\frac{1}{6}\left[ \tfrac{(-K\zeta)^\frac{1}{4}}{\sqrt{\pi}\eps^\frac{1}{6}}\sin\left(\eta - \tfrac{\pi}{4}\right) + \calO(\eps^\frac{5}{6})\right]\,,\\
& = -g(\mu)\left[\tfrac{k_2}{\sqrt{-k_1}}\left(1-t^2\right)^\frac{1}{4}\sin\left(\eta - \tfrac{\pi}{4}\right) + \calO(\eps)\right]\,,
\end{aligned}
\end{equation*}
thus concluding the proof.
\end{proof}

With Proposition \ref{prop:asy_pcf} we can now prove Proposition \ref{prop:bnd_mod3}.

\begin{proof}[Proof of Proposition \ref{prop:bnd_mod3}]\label{prf:sol_order_tp}
For readability of this proof, we omit the index $\eps$ in $\psi_\eps$ (the solution to the BVP \eqref{bvp:mod3}) and $\hat{\psi}_\eps$ (the solution to the IVP \eqref{ivp:tp_mod3}). The proof of Proposition \ref{prop:bnd_mod2} relies on the fact that the scaled Airy function solution on $[0,x_1]$ only depends on the variable $-x\eps^{-2/3}$ (see Remark \ref{rem:airy}). Since this is not true for the PCF solution, the strategy of this proof will deviate from Proposition \ref{prop:bnd_mod2}, and we shall also need the asymptotic expansion of $U(\nu,z(x))$. Still, we make a case distinction similar to the proof of Proposition \ref{prop:bnd_mod2}:

\noindent{\it\underline{Region $x_1\leq x\leq 1$:}}\\
In this region the function $a(x)$ is $\mathcal{C}^2$ and satisfies $a(x)\geq\tau_1>0$. Hence, Lemma \ref{lem:unif_bnd_W_mod3} yields $\|\hat{W}(x)\| = \Theta_\eps(1)$ on $[x_1,1]$ for the vector valued solution $\hat{W}$ to the IVP \eqref{ivp:tp_mod3_W}.
Since the norms of $\hat{W}(x)$ and $(\hat{\psi}(x),\eps\hat{\psi}^\prime(x))^\top$ are ($\eps$--uniformly) equivalent, we get 
\begin{equation*}
\left\|\mtrx{c}{\hat{\psi}(x)\\ \eps\hat{\psi}^\prime(x)}\right\| = \Theta_\eps(1)\,,\quad x_1\leq x\leq 1\,.
\end{equation*}
This yields the asymptotic behavior of the scaling constant $\alpha$ from \eqref{eqn:alpha_scal}: 
\begin{equation}\label{eqn:asy_alpha3}
\alpha(\hat{\psi}(1),\hat{\psi}^\prime(1)) = \Theta_\eps(1)\,.
\end{equation}
For the vector solution of the BVP \eqref{bvp:mod3} in the region $x_1\leq x\leq1$ this yields
\begin{equation}\label{eqn:unif_bnd_notp3}
\left\|\mtrx{c}{\psi(x)\\ \eps\psi^\prime(x)}\right\| = |\alpha|\left\|\mtrx{c}{\hat{\psi}(x)\\ \eps\hat{\psi}^\prime(x)}\right\| = \Theta_\eps(1)\,,\quad x_1\leq x\leq 1\,.
\end{equation}

\noindent{\it\underline{Region $0\leq x\leq x_1$:}}\\
The (extended) PCF solution to the BVP \eqref{bvp:mod3} on $[0,x_1]$ is
\begin{equation}\label{eqn:solpcf_bvp_mod3}
\psi(x) = \tfrac{\alpha(\hat{\psi}(1),\hat{\psi}^\prime(1))}{h(\mu)}\,U\left(\nu,z(x)\right)\,, \quad x\in[0,x_1]\,,
\end{equation}
where $\hat{\psi}(x)$ solves the IVP \eqref{ivp:tp_mod3}. 

We start with the proof of \underline{\smash{statement \ref{item:nonunifbnd_mod3_psi}}}. For the parabolic cylinder function $U(\nu,z(x))$ we can use the asymptotic expansion \eqref{eqn:full_asy_pcf} to get the following asymptotic representation (when using only the first term with $A_0(\zeta)=1$):
\begin{equation}\label{eqn:asy_rep_pcf_tp}
U(\nu,z(x)) = g(\mu)\left(2\sqrt{\pi}\varphi(t)\mu^\frac{1}{3}\Ai(\mu^\frac{4}{3}\zeta(t)) + \calO(\eps)\right)\,,
\end{equation}
with  $\zeta(t) = -\left(\frac{3}{4}\arccos(t)-\frac{3t}{4}\sqrt{1-t^2}\right)^\frac{2}{3}$ and $\varphi(t)=\left(\frac{-\zeta(t)}{1-t^2}\right)^\frac{1}{4}$. This asymptotic representation is uniform in $x\in[0,x_1]\subseteq[0,-\tfrac{k_2}{k_1}-\delta]$ or equivalently $t\in[1+\tfrac{2k_1}{k_2}x_1,1]\subseteq[-1+\tilde{\delta},1]$ for some $\delta,\tilde{\delta}>0$. The function $\varphi(t)$ is well-defined on the interval of interest and takes its minimum value $\varphi(1) =2^{-\frac{1}{6}}$ at the turning point $t=1$ (and $x=0$), cf.\ Lemma \ref{lem:phi}. Using 
\begin{equation*}
\mu^\frac{1}{3} = \frac{k_2^\frac{1}{3}}{2^\frac{1}{3}(-k_1)^\frac{1}{4}\eps^\frac{1}{6}}\,,
\end{equation*}
\eqref{eqn:solpcf_bvp_mod3} and \eqref{eqn:asy_rep_pcf_tp} yields 
\begin{equation*}
|\psi(x)| =|\alpha||f(\mu)|\tfrac{2^\frac{2}{3}\sqrt{\pi}\,k_2^\frac{1}{3}}{(-k_1)^\frac{1}{4}} \left|\varphi(t)\Ai(\mu^\frac{4}{3}\zeta(t))\eps^{-\frac{1}{6}} + \calO(\eps)\right|\,,
\end{equation*}
where $\alpha:=\alpha(\hat{\psi}(1),\hat{\psi}^\prime(1))=\Theta_\eps(1)$ as seen in \eqref{eqn:asy_alpha3}, and $f(\mu):=\tfrac{g(\mu)}{h(\mu)}=\Theta_\eps(1)$.
The asymptotic representation \eqref{eqn:asy_rep_pcf_tp} is uniform in $x\in[0,x_1]$, and thus,
\begin{equation*}
\max_{x\in[0,x_1]}|\psi(x)| =|\alpha||f(\mu)|\tfrac{2^\frac{2}{3}\sqrt{\pi}\,k_2^\frac{1}{3}}{(-k_1)^\frac{1}{4}} \max_{t\in[1+\frac{2k_1}{k_2}x_1,1]}\left|\varphi(t)\Ai(\mu^\frac{4}{3}\zeta(t))\right|\eps^{-\frac{1}{6}} + \calO(\eps)\,.
\end{equation*}
The maximum of the Airy function on $\R$ is attained at the value $y_{max}^{\Ai}\approx -1.01879$. The argument $\mu^\frac{4}{3}\zeta(t)$ takes this value on $[1+\tfrac{2k_1}{k_2}x_1,1]$ 
for $\eps$ sufficiently small, as  $\mu^\frac{4}{3}\zeta(t)$ is a negative (continuous and monotonously increasing) function and zero only at the turning point $t=1$ (and $x=0$), and $\mu^\frac{4}{3}\zeta(t)=\calO(\eps^{-\frac{2}{3}})$. Therefore $M := \max_{z\in\R}|\Ai(z)|= \max_{t\in[1+\frac{2k_1}{k_2}x_1,1]}\left|\Ai(\mu^\frac{4}{3}\zeta(t))\right|$ for $\eps$ sufficiently small. Due to Lemma \ref{lem:phi} the function $\varphi\big|_{[1+\frac{2k_1}{k_2}x_1,1]}$ is real valued, positive and bounded above and below ($\geq 2^{-\frac{1}{6}}$). Hence,
\begin{equation}\label{eqn:max_airy_phi}
\tilde{M}:=\max_{t\in[1+\frac{2k_1}{k_2}x_1,1]}\left|\varphi(t)\Ai(\mu^\frac{4}{3}\zeta(t))\right|=\Theta_\eps(1)\,.
\end{equation}
This implies
\begin{equation*}
\max_{x\in[0,x_1]}|\psi(x)| =C \tilde{M}\eps^{-\frac{1}{6}}+ \calO(\eps)\,,
\end{equation*}
where the constant $C=\Theta_\eps(1)$, and thus, $\|\psi\|_{L^\infty(0,x_1)}=\Theta(\eps^{-\frac{1}{6}})$. Together with \eqref{eqn:unif_bnd_notp3} this yields 
\begin{equation*}
\|\psi\|_{L^\infty(0,1)} = \Theta(\eps^{-\frac{1}{6}})\,,\qquad \eps\to 0\,.
\end{equation*}
\medskip

For the \underline{\smash{statement \ref{item:unifbnd_mod3_psiPri}}} we shall use again the uniform asymptotic expansion \eqref{eqn:full_asy_pcf} (when using only the second term with $D_0(\zeta)=1$) -- this time for 
\begin{equation*}
\eps\hat{\psi}^\prime(x) = -\tfrac{\sqrt{2\eps}(-k_1)^\frac{1}{4}}{h(\mu)}\,U^\prime(\nu,z(x))\,,
\end{equation*}
where the hat indicates the solution of the IVP \eqref{ivp:tp_mod3}. After scaling with the constant $\alpha$, we get for the solution to the BVP \eqref{bvp:mod3}:
\begin{align*}
\eps\psi^\prime(x) &= -\alpha\tfrac{\sqrt{2\eps}(-k_1)^\frac{1}{4}}{h(\mu)}\,U^\prime(\nu,z(x))\\
&= -\alpha\,f(\mu)\left(\tfrac{2^\frac{1}{3}\sqrt{\pi}\,k_2^\frac{2}{3}}{(-k_1)^\frac{1}{4}}\,\frac{\eps^\frac{1}{6}{\Ai}^\prime(\mu^\frac{4}{3}\zeta(t))}{\varphi(t)} + \calO(\eps^\frac{5}{6})\right)\,.
\end{align*}
Since this asymptotic representation is uniform in $x\in[0,x_1]$, we can write
\begin{equation*}
\max_{x\in[0,x_1]}|\eps\psi^\prime(x)| = |\alpha||f(\mu)|\tfrac{2^\frac{1}{3}\sqrt{\pi}\,k_2^\frac{2}{3}}{(-k_1)^\frac{1}{4}}\max_{t\in[1+\frac{2k_1}{k_2}x_1,1]}\left|\frac{\eps^\frac{1}{6}{\Ai}^\prime(\mu^\frac{4}{3}\zeta(t))}{\varphi(t)}\right| + \calO(\eps^\frac{5}{6})\,.
\end{equation*}
The argument of ${\Ai}^\prime$, i.e.\ $\mu^\frac{4}{3}\zeta(t)= \frac{K}{\eps^{2/3}}\zeta(t)$, is negative (except at the turning point $t=1$). The function $-K\zeta(t)$ maps the $t$--interval $[1+\frac{2k_1}{k_2}x_1,1]$ onto the interval $[0,\hat{x}]$ for some $\hat{x}>0$. Therefore the proof that $|\eps^\frac{1}{6}\,{\Ai}^\prime(\mu^\frac{4}{3}\zeta(t))|$ is $\eps$--uniformly bounded on $[1+\frac{2k_1}{k_2}x_1,1]$ is analog to Step 5 of the proof Proposition \ref{prop:bnd_mod2}.
Using $\varphi(t)\geq 2^{-\frac{1}{6}} > 0$ yields
\begin{equation}\label{eqn:max_ariyPri_phi}
\max_{t\in[1+\frac{2k_1}{k_2}x_1,1]}\left|\frac{\eps^\frac{1}{6}{\Ai}^\prime(\mu^\frac{4}{3}\zeta(t))}{\varphi(t)}\right|\leq \tilde{C}\,,
\end{equation}
for some $\eps$--independent $\tilde{C}>0$. Hence, $\eps\|\psi^\prime\|_{L^\infty(0,x_1)} = \calO(1)$ as $\eps\to 0$. Together with \eqref{eqn:unif_bnd_notp3} this yields 
\begin{equation*}
\eps\|\psi^\prime\|_{L^\infty(0,1)} = \calO_\eps(1)\,.
\end{equation*}
\end{proof}

\bigskip
\bigskip


\textbf{Acknowledgement:}

The authors were partially supported by the FWF-funded doctoral school W1245 and the FWF-project I3538--N32. Moreover, the first author is grateful to Houde Han for stimulating discussions on highly oscillatory problems. 



\end{document}